\newtheorem{thm}{Theorem}[section]
\newtheorem{cor}[thm]{Corollary}
\newtheorem{lemma}[thm]{Lemma}
\newtheorem{prop}[thm]{Proposition}
\numberwithin{equation}{section}
\theoremstyle{definition}
\newtheorem{example}[thm]{Example}
\newtheorem{definition}[thm]{Definition}
\newcommand{\bR}{{\mathbb{R}}}
  \newcommand{\E}{{\mathcal{E}}}
  \newcommand{\F}{{\mathcal{F}}}
  \newcommand{\G}{{\mathcal{G}}}
\renewcommand{\L}{{\mathcal{L}}}
  \newcommand{\M}{{\mathcal{M}}}
  \newcommand{\N}{{\mathcal{N}}}
\renewcommand{\S}{{\mathcal{S}}}
  \newcommand{\T}{{\mathcal{T}}}
  \newcommand{\U}{{\mathcal{U}}}
\newcommand{\Int}{\operatorname{Int}}
\newcommand{\Ext}{\operatorname{Ext}}
\begin{document}

\setcounter{tocdepth}{1}

\title[Partial triangulations of surfaces with girth constraints]%
{Partial triangulations of surfaces with girth constraints}

\author[Stephen C. Power]{Stephen C. Power}


\address{Dept.\ Math.\ Stats.\\ Lancaster University\\
Lancaster LA1 4YF \\U.K. }

\email{s.power@lancaster.ac.uk}

\thanks{
{MSC2020 {\it  Mathematics Subject Classification.}
05C10, 52C25 \\
{  \today} 
}}

\maketitle

\begin{abstract}
Barnette and Edelson have shown that there are finitely many minimal triangulations of a connected compact 2-manifold $\M$. Similar finiteness results are obtained for cellular partial triangulations that satisfy 
various girth inequality constraints for embedded cycles. A characterisation of various $\M$-embedded
sparse graphs 
is given in terms of the satisfaction of higher genus girth inequalities. With this it is shown that there are finitely many contraction-minimal $\M$-embedded graphs that are $(3,6)$-tight or $(3,3)$-tight. 
\end{abstract}

\section{Introduction}
 The following theorem, obtained in 1989, is due to Barnette and Edelson \cite{bar-ede-1}, \cite{bar-ede-2}.

\begin{thm}\label{t:main}
There are finitely many minimal triangulations of a compact 2-manifold.
\end{thm}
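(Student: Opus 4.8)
The plan is to read ``minimal triangulation'' as \emph{irreducible}: a simplicial triangulation $T$ of $\M$ in which no edge can be contracted to yield another simplicial triangulation of $\M$. First I would record the local criterion for contractibility. If $e=uv$ lies in the two faces $uvx$ and $uvy$, then contracting $e$ produces a simplicial triangulation of the same surface precisely when $x$ and $y$ are the \emph{only} common neighbours of $u$ and $v$ (the link condition). Hence $T$ is irreducible if and only if every edge $uv$ has a third common neighbour $w$, so that $uvw$ is a $3$-cycle not bounding a face --- an \emph{empty}, or non-facial, triangle. The theorem then reduces to one quantitative statement: there is a constant $N(\M)$, depending only on $\M$, bounding the number of vertices of any irreducible triangulation of $\M$. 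Granting this, finiteness is immediate, since up to isomorphism there are only finitely many simplicial complexes on at most $N(\M)$ vertices, and one discards those whose realisation is not homeomorphic to $\M$.

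To set up the count I would use Euler's formula. A triangulation with $n$ vertices, $E$ edges and $F$ faces satisfies $3F=2E$ and $n-E+F=\chi(\M)$, whence $E=3(n-\chi(\M))$ and $F=2(n-\chi(\M))$. Thus every size parameter is pinned down by $n$ alone, and the genuine task is to bound $n$ in terms of $\chi(\M)$. The base case is the sphere, where one checks directly that the boundary of the tetrahedron ($K_4$) is the only irreducible triangulation: every $3$-cycle separates $S^2$ into two disks, and an innermost separating triangle bounds a disk whose interior, if nonempty, can be shown to carry a contractible edge, forcing the triangulation down to $K_4$.

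The crux --- and the step I expect to be the main obstacle --- is the uniform linear bound $n\le c\,(1+|\chi(\M)|)$ for surfaces of higher genus. My approach would be topological, combined with a counting/discharging argument. Charging each of the $3(n-\chi(\M))$ edges to a witnessing empty triangle, and discharging each empty triangle to its three edges, forces on the order of $n$ empty triangles. Viewing each as a simple closed curve on $\M$, I would classify them by whether they bound a disk or are essential, and sort the essential ones into isotopy classes. Since $\M$ carries only $O(1+|\chi(\M)|)$ disjoint, pairwise non-isotopic essential curves, once $n$ is large either some disk-bounding empty triangle is innermost over a nonempty triangulated disk, or some isotopy class contains many disjoint parallel copies cobounding thin annuli; in either case the enclosed region is a triangulated disk or annulus whose interior, analysed as in the spherical case, must contain a contractible edge, contradicting irreducibility. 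Making this dichotomy quantitative --- controlling the nesting of the disks and annuli, and extracting a genuinely contractible edge rather than a merely locally reducible configuration --- is the delicate part, and is exactly where Barnette and Edelson's detailed combinatorial analysis is required.
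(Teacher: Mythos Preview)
Your outline identifies the right local criterion (every edge lies on a nonfacial $3$-cycle) and the right topological leverage (bounded families of essential curves), but the architecture diverges from the paper's and the step you flag as ``delicate'' does not work as stated. The paper does \emph{not} bound the vertex count directly. It runs an induction on genus: pick a nonplanar $3$-cycle $c$ in the minimal triangulation $T$, cut $\M$ along $c$, cap the resulting boundary with small triangulated discs, and obtain one or two surfaces of strictly lower genus carrying derived triangulations $T_1$ (or $T_1,T_2$). These need not be minimal, but any shrinkable edge of $T_1$ must, back in $T$, lie on a nonfacial $3$-cycle through some vertex of $c$. The induction step therefore reduces to a \emph{degree bound}: every vertex of a minimal triangulation has degree at most $cg$ for an absolute constant $c$ (Corollary~\ref{c:degreegenusbound}). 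This is obtained by localising to a single vertex $v$: the nonfacial $3$-cycles through $v$, after contracting the star of $v$ to a point, become loops at $v$ that are pairwise disjoint except at $v$; Lemma~\ref{l:4impliesShrinkable} ensures at most three fall into any homotopy class, and Theorem~\ref{t:loopsgenusbound} bounds the number of classes by $6g-3$. Lemma~\ref{l:simpleLemma} then shows $T_1$ arises from a minimal triangulation of $\M_1$ by boundedly many vertex splits, closing the induction.

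The concrete gap in your sketch is the passage from ``on the order of $n$ essential $3$-cycles'' to ``some isotopy class contains many \emph{disjoint} parallel copies cobounding thin annuli''. The witnessing $3$-cycles for distinct edges are not disjoint --- they share vertices and edges freely --- so neither the bound on pairwise non-isotopic disjoint curves nor the annulus picture applies to the family as given, and you offer no mechanism for extracting a large disjoint subfamily. Barnette and Edelson sidestep this entirely by working at a single vertex, where the curves meet only at that point, and by descending in genus rather than bounding $n$ globally. Direct size bounds of the type you envisage do exist (Nakamoto--Ota, Joret--Wood, as the paper notes), but they rely on different machinery --- edge-width or genus-additivity arguments --- not the disjoint-curve dichotomy you propose.
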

The first paper \cite{bar-ede-1} deals with orientable surfaces while the second paper  \cite{bar-ede-2} obtains the general case, resolves an oversight in \cite{bar-ede-1} and gives a more direct proof avoiding Mayer-Vietoris sequences for homology groups.
The proof, which we give in Section \ref{s:theBEthm}, is an elegant elementary induction argument combined with a somewhat subtle genus bound on the number of disjoint nonhomotopic curves, as given in Theorem \ref{t:loopsgenusbound}. 

We obtain analogous results for contraction-minimal embedded graphs in families of partial triangulations $G\subset \M$ that satisfy girth inequalities. 
In the simplest case these inequalities, the planar type girth inequalities, 
are length constraints on embedded cycles in $G$ that bound an open disc in $\M$ that is not fully triangulated (Definition \ref{d:planarGI}). 
In Section \ref{s:contractionminimalandgirth} it is shown that there are finitely many contraction minimal embedded graphs satisfying the planar type girth inequalities.
In Section \ref{s:generalGIand}, which is independent of Section \ref{s:contractionminimalandgirth}, higher genus girth inequalities are defined in terms of sets of closed walks in $G$ that are the boundary walks of a connected open set of general genus. Theorem \ref{t:Gr_equals_F} 
shows that $G$ is $(3,6)$-tight, in the sense of Definition \ref{d:36tight}, if and only if all higher genus girth inequalities are satisfied, together with the Maxwell count $3v-e=6$ for $G$.
In the final section we show that
for each 2-manifold $\M$ there are finitely many contraction-minimal cellularly embedded 
$(3,\alpha)$-tight graphs for $\alpha=6,3$, 
and we give applications to the rigidity of bar-joint frameworks in $\bR^3$ for Euclidean and nonEuclidean norms.

 A compact 2-manifold $\M$, as a topological space, is homeomorphic to a compact surface without boundary. We assume throughout that $\M$ is  connected. A face of an embedded (finite) simple graph $G$ in $\M$
is a connected component of the complement of $G$ and
we consider a \emph{triangulation} of $\M$ to be an embedded simple graph in $\M$ where every face is homeomorphic to an open disc and is bounded by a 3-cycle of embedded edges. Such an embedded graph $G$ is a \emph{minimal triangulation}, or is \emph{contraction-minimal}, if no edge can be topologically contracted, in the natural way, to give a triangulation of $\M$  with  2 fewer faces. 
Two triangulations are regarded as the same if their embeddings are unique up to a homeomorphism.
The embedded graph definition of a triangulation does not require the stricter condition that each face is uniquely associated with its boundary 3-cycle. Thus the unique minimal triangulation of the sphere in our sense is $K_3$ rather than $K_4$.  
We also note that Barnette \cite{bar} has shown that there are 2 minimal triangulations of the real projective plane.

One can view the $(3,6)$-tightness of a graph as a uniform sparsity condition whereby $3v-e\geq 6$ holds for all embedded subgraphs with $v\geq 3$, and where the Maxwell count holds for $G$  itself. 
On the other hand the satisfaction of the planar type girth inequalities is a nonuniform sparsity condition depending only on those subgraphs associated with the exterior (or interior) of planar type closed walks. (See Lemma \ref{l:Ext_G(c)}.)
An embedded graph in the sphere is well-known to be $(3,6)$-tight if and only if it is a triangulation of the sphere. For surfaces with nonzero genus a cellularly embedded graph  satisfies the Maxwell count if and only if the boundary lengths $l_1, \dots , l_n$ of its nontriangular faces satisfy $\sum_i(l_i-3) = 6g_r(\M)$ where $g_r(\M)$ is the reduced genus of $\M$. (See Lemma \ref{l:alphacyclebounds}.) The Maxwell count therefore ensures that  the number of nontriangular faces depends only on the genus. 

Classes of sparse graphs
admitting cellular embeddings, and their construction by vertex-splitting moves, are topics of current interest in the rigidity theory of generic bar-joint frameworks, and this provides motivation for our considerations here. The rigidity connection stems from the fact that vertex-splitting moves preserve the generic rigidity of such a framework (Whiteley \cite{whi}). See, for example, 
\cite{cru-kit-pow-1}, \cite{cru-kit-pow-2}, 
\cite{cru-kas-kit-sch}, 
\cite{dew-kit-nix}, 
\cite{jor-tan}, and 
\cite{kas-pow}. 

Girth inequalities were introduced in Cruickshank, Kitson and Power \cite{cru-kit-pow-1} in the setting of block and hole graphs considered earlier by Finbow-Singh and Whiteley \cite{fin-whi}. 
These graphs arise from surgery on a triangulated sphere whereby 
the interiors of some essentially disjoint (ie. nonoverlapping) triangulated discs 
are deleted and some of the resulting holes have so-called rigid block graphs attached at their boundaries. In the case of a graph $G$ with a single block and single hole and equal boundary lengths $r\geq 4$, a characterisation of generic rigidity in $\bR^3$ was given in \cite{fin-whi} in terms of the existence of $r$ disjoint paths from the boundary of the hole to the boundary of the block. In \cite{cru-kit-pow-1} it was noted that this is  equivalent to $|c|\geq r$ for any closed walk $c$ separating the block from the hole.
It was also shown that this girth condition is equivalent to the (3,6)-tightness of $G$ if the inserted block graph is minimally rigid, and moreover that this characterisation holds for the case of a single block and several holes. 

Recall that the \emph{girth of a graph} that is not a tree  is the smallest cycle length. The contrasting  terminology above derives from the fact that for the single block and single hole graph it is the cycle lengths $|c|$ that
determine (3,6)-tightness. The condition is that the minimum of these lengths (the ``girth" of the triangulated cylinder) should not be less than $r$.


Other proofs of Theorem \ref{t:main} by Gao et al \cite{gao-et-al}, Nakamoto and Oka  \cite{nak-ota},  and Joret and Wood \cite{jor-woo}, exploit the deep additive formulae for the genus of a graph and obtain upper bounds for the size of a minimal.
We also note that Theorem \ref{t:main} has been generalised in  Boulch et al \cite{bou-et-al} to surfaces with boundary. The sketch proof of Lemma 4 of \cite{bou-et-al} needed for this may be completed with the loop extension theorem \cite{bar-ede-2} 
given in Theorem \ref{t:extensionlemma} below.  

 For  a broad background on embedded graphs see Gross and Tucker \cite{gro-tuc} and Mohar and Thomassen \cite{moh-tho}. Theorem \ref{t:main} has been generalised by Malni\u c and Nedela \cite{mal-ned} to so-called $k$-minimal triangulations $T$ for $k\geq 3$, meaning that the edge-width of $T$ is $k$ and each edge lies on an essential $k$-cycle. See also Theorem 5.4.1 of \cite{moh-tho}. A triangulation has \emph{edge-width k} if the minimal length of an essential cycle is $k$. A cycle, or closed walk of edges, in an embedded graph is \emph{essential} if it is not null-homotpic. 

\section{The Barnette-Edelson theorem.}\label{s:theBEthm}
We give a proof of Theorem \ref{t:main} that follows the elegant proof in \cite{bar-ede-2}. 
The induction scheme of the proof is outlined in the next paragraph and we use a similar inductions for Proposition \ref{p:bound_on3cyclesG_pl} and the main theorem, Theorem \ref{t:mainGgamma}.

\medskip

\noindent \emph{Proof scheme.} A 3-cycle of $T$ is \emph{planar}  if it is  the boundary of an embedded open disc in $\M$ and is \emph{nonplanar} otherwise. 
The surface $\M$ is cut by a nonplanar 3-cycle of the triangulation $T$. If this 3-cycle has a small closed  neighbourhood that is an annulus then the resulting 1 or 2 open manifolds, $\M'$, or $\M'$ and $\M''$, determine compact surfaces with boundary, each component of the boundary having an embedded 3-cycle.
If the cutting 3-cycle has a M\"{o}bius strip neighbourhood in $\M$ then  the resulting compact surfaces with boundary have an embedded 6-cycle in each boundary component. 
In both cases, by capping the boundary components with a 
topological disc one obtains 1 or 2 compact surfaces. These  lower genus surfaces, say $\M_1$, or $\M_1$ and $\M_2$, have triangulations, $T_1$, or $T_1$ and $T_2$, that are derived from $T$ and from small triangulations of the capping discs.
A capping disc for the cutting cycle has the form of an attached triangle or an attached triangulated 6-cycle. For a minimal triangulation $T$, the derived triangulations need not be minimal and the crux of the proof is to show that they are close to minimal in the following sense. The number of edges of $T_1$ in $\M_1$ that do not lie on an essential 3-cycle in $\M_1$ is bounded (with bound  depending only on the genus of $\M$).
These constraints ensure that the derived triangulations are close to minimal, in the sense of Lemma \ref{l:simpleLemma}, where $n$ has a genus bound.
\medskip

The following lemma is Lemma 6 of \cite{bar-ede-1}. See also Lemma 6 of \cite{bou-et-al}. A \emph{shrinkable}, or \emph{contractible}, edge of the triangulation $T$ is one that does not lie on a 3-cycle of edges other than the two facial 3-cycles that contain the edge. In this case there is a natural contraction $G/e$ of $G$, obtained by contracting $e$ to a single vertex and contracting each incident face to an edge. 

\begin{lemma}\label{l:4impliesShrinkable}
{If $e$ is an edge in a triangulation $T$ of a compact 2-manifold $\M$ and if $e$ lies
on four distinct nonplanar 3-cycles, all homotopic to each other, then $T$ has a shrinkable edge.}
\end{lemma}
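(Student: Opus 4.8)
The plan is to localise the problem to a triangulated disc cut out by two of the four homotopic cycles, and then to extract a shrinkable edge from inside that disc. Write $e=uv$ and let the four nonplanar $3$-cycles be $C_i=uvw_i$ for $i=1,\dots,4$. Since the two facial $3$-cycles through $e$ bound the incident faces and are therefore planar, each $C_i$ is non-facial, and the apexes $w_1,\dots,w_4$ are distinct. Deleting the common edge $e$ from $C_i$ leaves an arc $a_i=uw_iv$ of length two, and these four arcs are internally disjoint, meeting only at $u$ and $v$. Because the $C_i$ are mutually homotopic and share $e$, the arcs $a_i$ are homotopic rel $\{u,v\}$, so for each pair the simple closed curve $a_i\cup a_j$ is null-homotopic and bounds an embedded disc $D_{ij}$ in $\M$.

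First I would show that $e$ lies outside every such bigon. Indeed $e$ meets $\partial D_{ij}=a_i\cup a_j$ only at $u$ and $v$, so its open arc lies entirely inside or entirely outside $D_{ij}$; were it inside, it would split $D_{ij}$ into two discs bounded by $e\cup a_i=C_i$ and $e\cup a_j=C_j$, forcing $C_i$ and $C_j$ to be planar, contrary to hypothesis. Hence the four arcs are \emph{parallel}: the bigons are nested and the arcs admit a linear order, say $a_1,a_2,a_3,a_4$, in which consecutive arcs cobound innermost bigons and $D_{14}$ contains $a_2$ and $a_3$ in its interior. Let $\Delta=\overline{D_{14}}$; this is an embedded triangulated disc with boundary the $4$-cycle $uw_1vw_4$, and crucially the apexes $w_2,w_3$ lie strictly in $\Int \Delta$. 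It is precisely here that the hypothesis of \emph{four} homotopic cycles is used: it guarantees that the outer disc is nondegenerate, containing interior vertices on which to run the next step.

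The main step is to produce a shrinkable edge inside $\Delta$. The key structural observation is that, since edges of an embedded graph meet only at common vertices, any edge issuing from a vertex strictly interior to $\Delta$ must terminate in $\Delta$; consequently the entire star of an interior vertex is contained in $\Delta$, and any $3$-cycle with an interior vertex either lies wholly in the simply connected disc $\Delta$—and is therefore planar—or else escapes only along a chord of $\partial\Delta$ joining two boundary vertices, the only candidates being $e=uv$ and a possible edge $w_1w_4$. Thus the $3$-cycles relevant to the interior are planar apart from the controlled exceptions $C_2,C_3$ through $e$. I would then take an innermost planar non-facial $3$-cycle inside $\Delta$ and argue, by the usual peeling of a triangulated disc, that an edge interior to it lies on no $3$-cycle other than its two incident faces, yielding a shrinkable edge chosen away from $u$ and $v$ so as to avoid the exceptional boundary chords.

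The hardest part, and the crux of the whole lemma, is this internal disc argument: verifying that a triangulated disc in which every interior $3$-cycle is planar—so that no interior separating triangle can persist indefinitely—must contain an edge all of whose $3$-cycles are facial, together with the bookkeeping needed to keep such an edge clear of the chords $e$ and $w_1w_4$. The degenerate cases, where an innermost bigon collapses to two triangles sharing a chord $w_kw_{k+1}$, must be treated separately by exploiting the remaining homotopic arcs, and it is the availability of four cycles that supplies enough arcs to force a genuinely interior configuration and thereby complete the argument.
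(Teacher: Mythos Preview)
Your proposal follows exactly the idea the paper sketches (and attributes to \cite{bar-ede-1}): use the homotopic $3$-cycles through $e$ to cut out a triangulated planar patch and locate an edge sufficiently interior to it that every $3$-cycle through that edge must be planar, hence facial. The paper gives no further detail to compare against; the one step you should not take for granted is the passage from free homotopy of the $C_i$ to homotopy of the $a_i$ rel $\{u,v\}$---this needs the bigon criterion (or Baer's theorem) for simple closed curves on surfaces rather than being an immediate consequence of sharing $e$.
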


The idea of the proof given in \cite{bar-ede-1} is that the hypotheses imply there is a triangulated planar patch of $T$ with an edge that is sufficiently interior to this patch that it cannot lie on a nonfacial 3-cycle. 
In Lemma \ref{l:lensLemma} we give a version of this principle which is applicable in our context of edges lying on general critical embedded cycles as well as nonplanar 3-cycles.

It is well-known that a nonfacial planar 3-cycle $c$ with associated open disc $U$, in a triangulation of $\M$,  implies the existence of a contractible edge that is inside $c$. We say that an edge is \emph{inside} $c$ or $U$ if it meets $U$. See Lemma 1 of Barnette \cite{bar} for example. Thus, in a minimal triangulation all nonfacial 3-cycles are nonplanar.

The next theorem gives a key topological fact. See also Malni\u c and Mohar \cite{mal-moh}.

\begin{thm}\label{t:loopsgenusbound}
If $\M$ is not the sphere or projective plane then any family $\E$ of homotopically nontrivial simple pairwise nonhomotopic curves, meeting only at a common point in $\M$, has at most $6g - 3$ members when $\M$ is orientable, and $3g$
members when $\M$ is nonorientable, where $g$ is the genus of $\M$.
\end{thm}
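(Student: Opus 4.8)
The plan is to treat $\E=\{c_1,\dots,c_n\}$ as the edge set of a graph $G$ embedded in $\M$ whose only vertex is the common point $p$; thus $G$ is a bouquet of $n$ loops, and the hypotheses say exactly that these loops are essential, simple, pairwise nonhomotopic, and meet only at $p$. Since we want an upper bound on $n$, I would first enlarge $\E$ to a maximal such family, which can only increase $n$, and then argue that a maximal family is cellularly embedded, i.e.\ that every face is an open disc. The point is that $p$ lies on the boundary walk of every face (each such walk is a nonempty word in the loops, all of which are incident to $p$), so a non-disc face would carry an essential simple loop based at $p$, disjoint from the $c_i$ and nonhomotopic to each of them, contradicting maximality. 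Justifying this assertion carefully, by separately treating an annular face, a face of positive genus, a nonorientable face, and a planar face with several boundary circles, is the first technical point.

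Once the embedding is cellular, Euler's formula reads
\[
1-n+F=\chi(\M),
\]
where $F$ is the number of faces, and counting edge-sides gives $\sum_f \ell_f=2n$, with $\ell_f$ the length of the boundary walk of the face $f$. I would then exclude short faces. A monogon would exhibit some $c_i$ as the boundary of a disc, hence as null-homotopic; a bigon bounded by two distinct loops $c_i,c_j$ would force them to be freely homotopic; and a bigon of the form $c_ic_i^{-1}$ would again make $c_i$ null-homotopic. All three are ruled out by the hypotheses.

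In the orientable case there are no one-sided loops, so the only remaining bigon type, a single loop traversed twice in the same sense, cannot occur, and every face satisfies $\ell_f\ge 3$. Hence $3F\le\sum_f\ell_f=2n$, and substituting $\chi(\M)=2-2g$ into Euler's formula gives $n+1-2g\le\tfrac{2}{3}n$, that is $n\le 6g-3$. Equality is attained by a one-vertex triangulation, so here the bound is sharp.

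The nonorientable case is the crux, and I expect it to be the main obstacle. Here a one-sided loop $c_i$ genuinely can bound a bigon face with boundary walk $c_ic_i$, and together with $c_i$ this bigon forms an embedded cross-cap, so faces of length two cannot be excluded outright. Writing $k$ for the number of such cross-cap bigons, the side-count only yields $3F\le 2n+k$, and with $\chi(\M)=2-g$ Euler's formula gives $n\le 3g-3+k$. Everything therefore reduces to controlling $k$: the cores of these bigons are disjoint, pairwise nonhomotopic, one-sided simple closed curves passing through the single point $p$, and the fact that they all share this basepoint sharply limits how many can coexist on a surface of nonorientable genus $g$ (for instance, merely clustering cross-caps at $p$ does not by itself produce separate bigon faces, as a direct Euler count shows). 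Extracting from this the bound that upgrades $n\le 3g-3+k$ to the stated $n\le 3g$ is the delicate step, and I expect it, rather than the Euler-characteristic bookkeeping, to be where the genuine difficulty lies.
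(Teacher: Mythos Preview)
Your approach coincides with the paper's: it isolates your ``enlarge to a maximal family, observe cellularity, rule out short faces'' step as a separate extension lemma (Theorem~\ref{t:extensionlemma}), and then runs exactly your Euler-characteristic count. Your first technical point, that a non-disc face admits a further essential loop through $p$ nonhomotopic to the existing ones, is precisely the content of that lemma, and the paper too offers only a proof sketch for it.

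Where your proposal stalls is in the nonorientable case, but the perceived obstacle dissolves: a cross-cap bigon, a disc face $F$ with boundary walk $c_ic_i$, simply cannot occur once $\M\neq\mathbb{RP}^2$, so your $k$ is always zero. If such an $F$ existed then $c_i$, being one-sided, would have $F$ as its \emph{only} adjacent face, so every point of $c_i\setminus\{p\}$ has a full $\M$-neighbourhood inside $F\cup c_i$. Thus $F\cup(c_i\setminus\{p\})$ is open in $\M$; so is $\M\setminus(F\cup c_i)$; and these two sets partition $\M\setminus\{p\}$. Any other loop $c_j$ lies, apart from $p$, in the second set, so both are nonempty and $\M\setminus\{p\}$ is disconnected, which is absurd for a surface. (Equivalently, $F\cup c_i$ is an embedded copy of $\mathbb{RP}^2$ touching the rest of $\M$ only at $p$.) The paper absorbs exactly this observation into the phrase ``together with the assumption that $\M$ is not the projective plane'' in its extension lemma. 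With $k=0$ your own inequality already gives $n\le 3g-3$, which is in fact what the paper's Euler count produces; the stated bound $3g$ is not sharp.
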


The theorem is Corollary 1 of Barnette and Edleson  \cite{bar-ede-2} and follows from a  topological extension theorem (Theorem 1 of \cite{bar-ede-2}) that corrects the oversight in \cite{bar-ede-1}. This extension theorem is Theorem \ref{t:extensionlemma} below. 

Recall that the genus $g$ of $\M$ is  the maximum number of disjoint loops in $\M$ whose union has a connected complement in $\M$. We say that a set associated with an embedded graph in a family of $\M$-embedded graphs, has a \emph{genus bound} or \emph{independent bound}, if there is an upper bound for the size of the set for this family that depends only on the genus of $\M$.


With the following corollary 
we may bypass some lemmas from \cite{bar-ede-1}. It plays a key role in the induction step of the proof scheme.

\begin{cor}\label{c:degreegenusbound} 
There is a constant $c$ such that the degree of a vertex in a minimal triangulation $T$ of the surface  $\M$ is no greater than $cg$ where $g$ is the genus of $\M$. 
\end{cor}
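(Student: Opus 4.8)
The plan is to bound the degree $d$ of a vertex $v$ by producing, from the edges at $v$, a family of $\Omega(d)$ pairwise non-homotopic essential simple closed curves that meet only at $v$, and then to invoke Theorem~\ref{t:loopsgenusbound} to conclude $\Omega(d)\le 6g$ (or $3g$), that is, $d=O(g)$. First I would set up the local picture. Let $w_1,\dots,w_d$ be the neighbours of $v$ in cyclic order, so that the star of $v$ is a disc bounded by the link cycle $L=w_1w_2\cdots w_d$. Since $T$ is minimal it has no shrinkable edge, so every edge $vw_i$ lies on a non-facial $3$-cycle; and since every non-facial planar $3$-cycle would contain a contractible edge inside it, this $3$-cycle is in fact \emph{nonplanar}, say $C_i=v\,w_i\,w_{k_i}$, whose third edge $c_i=w_iw_{k_i}$ is a chord of $L$ (it is not a link edge, as link edges bound facial triangles). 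Each $C_i$ is a simple closed curve through $v$ that is homotopically nontrivial because it is nonplanar.

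Next I would convert these $3$-cycles into admissible curves for Theorem~\ref{t:loopsgenusbound}. I realise $C_i$ by a loop $\mu_i$ that runs from $v$ along a radius of the star-disc to $w_i$, then along the chord $c_i$ to $w_{k_i}$, and back along a radius to $v$. Radii to distinct boundary vertices meet only at $v$, and two chords with no common endpoint are disjoint edges of the embedded graph, so $\mu_i\cap\mu_j=\{v\}$ whenever $c_i$ and $c_j$ are vertex-disjoint. Every $w_i$ is an endpoint of $c_i$, so the chords cover all of $L$ and number at least $d/2$; a greedy selection then extracts a vertex-disjoint subfamily of size $\Omega(d)$, all of whose realising curves meet pairwise only at $v$. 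It remains to thin this subfamily to pairwise non-homotopic curves with only a bounded loss.

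The main obstacle is precisely this last thinning: bounding, by an absolute constant, the number of curves of the vertex-disjoint family that can lie in a single homotopy class. Here I would argue that $k$ pairwise homotopic essential simple closed curves through $v$, meeting pairwise only at $v$, are nested and cobound a chain of $k-1$ triangulated annuli in $\M$; if $k$ is large, some edge interior to this chain is crossed by four of the parallel curves and therefore lies on four distinct nonplanar $3$-cycles all homotopic to the common core, so Lemma~\ref{l:4impliesShrinkable} forces a shrinkable edge, contradicting minimality. This caps the multiplicity of each class at a constant, leaving a subfamily of $\Omega(d)$ pairwise non-homotopic curves meeting only at $v$. Applying Theorem~\ref{t:loopsgenusbound} then yields $\Omega(d)\le 6g-3$ when $\M$ is orientable and $\le 3g$ otherwise, whence $d\le cg$ for a suitable $c$. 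The delicate points to be nailed down are the annulus structure cobounded by disjoint homotopic curves through a single point, and the extraction of one edge meeting four homotopic nonplanar $3$-cycles, which is exactly where the topology of Theorem~\ref{t:loopsgenusbound} and Lemma~\ref{l:4impliesShrinkable} must be combined with care.
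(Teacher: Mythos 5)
Your overall strategy mirrors the paper's: use minimality to put every spoke $vw_i$ on a nonfacial (hence nonplanar) $3$-cycle, convert these $3$-cycles into loops based at $v$ that meet only at $v$, discard homotopic repetitions via Lemma~\ref{l:4impliesShrinkable}, and finish with Theorem~\ref{t:loopsgenusbound}. But two of your steps have genuine gaps. The fatal one is the extraction of $\Omega(d)$ pairwise vertex-disjoint chords. Nothing prevents all the chords $c_i=w_iw_{k_i}$ from sharing a single endpoint --- every nonfacial $3$-cycle through $v$ could pass through one fixed neighbour $w_1$ --- and in that case the largest vertex-disjoint subfamily has exactly one member, so greedy selection yields $O(1)$ curves, not $\Omega(d)$. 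Minimality gives you no control over this concentration, and your proof offers none; nor can you perturb two loops sharing the point $w_1$ apart, since Theorem~\ref{t:loopsgenusbound} requires curves meeting \emph{only} at the common base point. The paper sidesteps the issue entirely: instead of building loops from radii and chords, it shrinks the whole closed star $N(v)$ to the point $v$ (a deformation of $\M$ under which the surface and the homotopy classes are unchanged). Under this collapse every peripheral edge becomes a loop based at $v$, and since distinct peripheral edges have disjoint interiors --- even when they share endpoints --- the resulting loops automatically meet only at $v$. No disjointness of chord endpoints is ever needed.

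The second gap is your thinning step. You apply it to a family of curves that already meet pairwise only at $v$; but then no edge of $T$ lies on two of these curves, let alone four, so your route to Lemma~\ref{l:4impliesShrinkable} (``some edge \dots lies on four distinct nonplanar $3$-cycles'') cannot get started, and the nested-annuli picture does not produce the required common edge. In the paper the multiplicity bound is obtained \emph{before} any disjointness consideration, at the level of the $3$-cycles themselves: among the at least $\lceil d/2\rceil$ distinct nonplanar $3$-cycles through $v$, Lemma~\ref{l:4impliesShrinkable} together with minimality caps the number lying in any one homotopy class at $3$, leaving at least $\lceil d/2\rceil/3$ pairwise nonhomotopic $3$-cycles; only afterwards are these turned into loops at $v$ by collapsing the star, and Theorem~\ref{t:loopsgenusbound} is applied. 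Reordering your argument this way, and replacing the radius-chord loops by the star-collapsing trick, recovers the paper's proof.
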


\begin{proof}
Suppose that $v$ has degree $m$. Consider the graph $N(v)$ corresponding to  the union of the triangles of $T$ incident to $v$. By minimality there are at least $\lceil m/2 \rceil$ nonplanar 3-cycles through $v$ with various edges not in $N(v)$ but with vertices adjacent to $v$. We call these  \emph{peripheral edges}. 
By Lemma \ref{l:4impliesShrinkable} there must be at least $ \lceil m/2 \rceil/3$ of these 3-cycles that are pairwise nonhomotopic. 
By shrinking $N(v)$ to $v$, by a path of homeomorphisms of $\M$, we see that the peripheral embedded edges of these 3-cycles determine a set of loops at $v$ which are otherwise disjoint. These loops are also pairwise nonhomotopic and so, by Theorem \ref{t:loopsgenusbound}, $m\leq cg$ for some constant $c$.
\end{proof}

We next give a proof of Theorem \ref{t:main} assuming Theorem \ref{t:loopsgenusbound} and follow this with a proof of Theorem \ref{t:loopsgenusbound}. 
\medskip

Let $T'$ be a triangulation obtained from the triangulation $T$ by shrinking a single edge. Then the reverse operation, $T \to T'$, is referred to as a \emph{vertex-splitting move}, or, more precisely, a \emph{planar vertex-splitting move}.  The following simple lemma plays a role in the induction argument of the proof.
It also features in Theorem \ref{t:partialtriangulations}.

\begin{lemma}\label{l:simpleLemma}
Let $T$ be a triangulation of a compact manifold $\M$ for which there are exactly $n$ edges that do not lie on a nonfacial 3-cycle. Then $T$ is obtained from a minimal triangulation by a sequence of at most $n$ vertex-splitting moves. 
\end{lemma}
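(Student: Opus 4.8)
The plan is to work directly with the contraction picture. An edge $e=uv$ fails to lie on a nonfacial 3-cycle precisely when the only common neighbours of $u$ and $v$ are the two third vertices $w,x$ of the facial triangles $uvw,uvx$, i.e. when $|N(u)\cap N(v)|=2$; these are exactly the shrinkable edges, and $T$ is minimal iff it has none. A single contraction lowers the vertex count by one, so a vertex-splitting sequence of length $k$ from a minimal triangulation to $T$ exists iff $T$ reduces to that minimal triangulation by $k$ contractions of shrinkable edges, in which case $k=v(T)-v(T_{\min})$. Thus the lemma is equivalent to the assertion that $T$ can be reduced to some minimal triangulation using at most $n$ contractions, and I would reduce greedily, contracting shrinkable edges until none remain.

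The first step is the local effect of one contraction $T\to T/e$, with $e=uv$ and third vertices $w,x$. A common-neighbour count $|N(a)\cap N(b)|$ of a surviving edge $ab$ can decrease only when both $u$ and $v$ were common neighbours of $a,b$, and this forces $\{a,b\}=\{w,x\}$; for edges meeting the new vertex the count does not decrease at all. Hence the only edge that can become newly shrinkable is $wx$, while $e$ itself disappears, which gives the monotonicity $n(T/e)\le n(T)$. Crucially, however, $n$ need not drop: contracting any apex edge of the triangular bipyramid produces $K_4$, in which the former base edge $wx$ has become shrinkable, so the count stays at six. The naive induction ``contract one edge, lose one shrinkable edge'' is therefore false, and the bound must be obtained globally.

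For the global argument I would fix a reduction $T=T_0\to T_1\to\cdots\to T_k=T_{\min}$ and record the induced partition of $V(T)$ into the blocks of vertices that get identified. The key combinatorial point is that each edge of $T_0$ is ``crossed'' by at most one of the $k$ merges, namely the unique step that first places its two endpoints into a common block. Consequently, if every merge is crossed by at least one shrinkable edge of $T_0$, then choosing one such edge per merge produces an injection from the $k$ merges into the set of shrinkable edges, and $k\le n$. A clean way to package this is through the subgraph $S$ of shrinkable edges of $T$: if $S$ has $m$ connected components then $n=|S|\ge v(T)-m$, and a reduction in which no merge ever identifies vertices lying in two different components of $S$ keeps every block inside a single component, so that $v(T_{\min})\ge m$ and $k=v(T)-v(T_{\min})\le v(T)-m\le n$.

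The main obstacle is producing such a reduction: showing that minimality can always be reached while only ever contracting an edge whose endpoints lie in a common component of $S$ (equivalently, an edge descending from a shrinkable edge of $T$). The difficulty is exactly the $wx$ phenomenon above — contracting a within-component shrinkable edge can create a new shrinkable edge joining two hitherto separate components — and I must rule out that such cross-component edges are ever the \emph{only} shrinkable edges available. I would attempt this by analysing the link of the vertex created by the most recent contraction and arguing that a newly created cross-component shrinkable edge is always accompanied by a within-component one, so that the greedy within-component reduction never stalls before reaching a minimal triangulation. This control of the newly created shrinkable edges is where the real work lies.
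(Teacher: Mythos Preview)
Your proposal stops short of a proof. You set up the component strategy for the shrinkable subgraph $S$ and then explicitly defer what you call the ``main obstacle'': producing a reduction to a minimal triangulation in which every contracted edge stays within a single component of $S$. The non-strict monotonicity $n(T/e)\le n(T)$ that you do establish does not by itself bound the number of contractions, and your proposed link analysis (``a newly created cross-component shrinkable edge is always accompanied by a within-component one'') is only a hope, not an argument. So as written the proof is incomplete.

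That said, your bipyramid observation is correct and worth flagging, because the paper's own proof is precisely the ``naive induction'' you reject. The paper asserts that an edge lying on a nonfacial $3$-cycle of $T$ still lies on one in $T/e$, and from this concludes that $n$ drops strictly at every contraction. Contracting the apex edge $pa$ of the triangular bipyramid refutes that assertion: the unique nonfacial $3$-cycle $abc$ maps onto $zbc$, which coincides with the image of the face $pbc$, so the equatorial edge $bc$ becomes shrinkable and $n$ stays at $6$. You have therefore located a genuine gap in the paper's argument as stated. Your sharper local picture --- that $wx$ is the only edge not meeting the new vertex whose common-neighbour count can fall to $2$, and that for the merged edges $\bar z w,\bar z x$ at least one preimage must already have been shrinkable --- does pin down that equality $n(T/e)=n(T)$ can occur only through the single $wx$-flip. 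What both your approach and the paper's are missing is a mechanism that rules out, or absorbs, an unbounded run of such flips.
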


\begin{proof}Observe first that if $T\to T'$ is an edge shrinking move between triangulations and some edge $f$ in $T$ lies on a nonfacial 3-cycle of $T$, then the corresponding edge in $T'$ once again lies on nonfacial 3-cycle. Thus shrinking an edge that does not lie on a nonfacial 3-cycle strictly decreases the number of edges failing to lie on a nonfacial 3-cycle. The edge shrinking move and the planar vertex-splitting moves are inverse operations on the set of triangulations of $\M$ and so the lemma follows.
\end{proof}

\begin{proof}[Proof of Theorem \ref{t:main}]
Let $\M$ be orientable with genus $g$ and suppose that the number of minimal triangulations of a compact 2-manifold with lower genus  
is finite.
Cutting $\M$ with a 3-cycle that is not homotopic in $\M$ to a point 
we obtain 1 or 2 manifolds with boundary. Capping boundaries, as indicated above, yields a compact 2-manifold $\M_1$, or the pair $\M_1$ and $\M_2$, each of which has genus smaller than $g$. 

Suppose first that there is one derived manifold $\M_1$. It carries an inherited triangulation $T_1$ which need not be minimal. A shrinkable edge, $f$ say, of $T_1$ has the property that it lies on a 3-cycle in $\M$ which passes through a vertex of the cutting 3-cycle.
By Lemma \ref{c:degreegenusbound} the degrees of these vertices, $x,y$ and $z$, in the triangulation $T$, are bounded by $cg$ for some constant $c$. 
It follows that the number of such 3-cycles, and therefore the number of shrinkable edges $f$, is at most $dg$ for some positive constant $d$. Assume, for the induction hypothesis, that the number of minimal triangulations of $\M_1$ is finite. By
 Lemma \ref{l:simpleLemma}  the number of triangulations $T_1$  of $\M_1$ obtained by the construction above is finite and so
the number of minimal triangulations $T$ is finite. 
This argument establishes the induction step in the orientable case with cut cycle giving a connected manifold $\M_1$. The same argument applies when the cut leads to a pair $\M_1, \M_2$ and so, since there is one minimal triangulation for the sphere the theorem follows for orientable 2-manifolds. 

In the nonorientable case the capping of the boundaries of $\M_1$ (or $\M_1$ and $\M_2$) is  by  the attachment of triangulated discs to 3-cycles or 6-cycles and the argument is the same. 
\end{proof}

The proof of Theorem \ref{t:loopsgenusbound} depends on the following extension theorem which shows that we may assume that
the (nonsimple) embedded graph associated with $\E$ is \emph{cellular}.


We make use of the following terms. An embedded graph $G$ in a compact 2-manifold $\M$ is \emph{cellular} if each of its faces is homeomorphic to an open disc. The \emph{genus} $g(U)$ of a general face $U$ is the maximum number of disjoint loops in $U$ that do not disconnect $U$. 
A loop edge $e$ in a surface is  \emph{essential} if an associated simple closed curve for it, say $\alpha(t), 0\leq t\leq 1$, is not null-homotopic. Also, a pair of loops $e_1, e_2$ with curves $\alpha, \beta$ with  $\alpha(0)=\beta(0)=v$ is a homotopic pair of loops if  $\{\alpha, \beta\}$ or $\{\alpha, -\beta\}$ is a homotopic pair.
Loops in a surface are locally 2-sided. If both sides of a loop edge $e$ belong to the same face we say that $e$ is a 1-sided edge, otherwise $e$ is called a 2-sided edge. By the number of edges of a face $U$ we  mean the number of 2-sided edges in
the boundary of $U$ plus twice the number of 1-sided edges in the boundary.

\begin{thm}
\label{t:extensionlemma}
Suppose that $\M$ is not the sphere or the projective plane. Then any family $\E=\{e_1, \dots ,e_r\}$ of pairwise nonhomotopic essential loops, meeting only at a common point $v$, extends to a similar family $\tilde{\E}$ whose embedded graph $G(\tilde{\E})$ 
is cellular with each face having at least 3 edges.
\end{thm}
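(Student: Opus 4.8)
The plan is to show that if the embedded graph $G(\E)$ of the loop family has a face $U$ that is not an open disc or has fewer than $3$ edges, then we can enlarge the family with one more essential loop, nonhomotopic to the existing ones and meeting them only at $v$, while keeping the family essential. Since each added loop increases $r$ by one and $r$ is bounded (by Theorem \ref{t:loopsgenusbound}, $r \le 6g-3$ or $3g$ once we know the loops are pairwise nonhomotopic), the enlargement process must terminate, and it can only terminate at a family whose graph is cellular with all faces having at least $3$ edges. So the proof is an extension-and-termination argument, and the real content is the single extension step.

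First I would set up the dual objects: the embedded graph $G = G(\E)$ has one vertex $v$, $r$ edges (the loops), and some set of faces. I would treat two obstructions separately. \emph{Case (a): some face $U$ is not a disc}, i.e.\ has positive genus $g(U) \ge 1$ or is disconnected from being simply connected (handles or extra boundary). By the definition of $g(U)$ there is a loop in $U$ that does not disconnect $U$; I would choose such a simple closed curve $\gamma$, homotope it to pass through $v$ (the boundary of $U$ contains $v$ since all edges meet there), and realise it as a new loop edge $e_{r+1}$ based at $v$ lying in $\overline{U}$ except at $v$. Because $\gamma$ is nonseparating in $U$ it is essential in $\M$, and because it lives in the interior of $U$ it meets the existing loops only at $v$; nonhomotopy to each existing $e_i$ follows because a curve interior to a single face cannot be freely homotoped across the other loops without leaving $U$. \emph{Case (b): every face is a disc but some disc face $U$ has at most $2$ edges on its boundary.} Here I would cut $U$ with an arc from $v$ to $v$ across the disc, again producing a new essential loop through $v$ interior to $U$; the at-most-$2$-edges hypothesis is exactly what guarantees the two pieces of $\partial U$ that the arc separates are nontrivial enough that the new loop is essential and nonhomotopic to the old ones.

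The termination argument then runs as follows. Each extension step strictly increases the number of loops in the family; by Theorem \ref{t:loopsgenusbound} the number of pairwise nonhomotopic essential loops meeting only at $v$ is at most a constant depending on $g$. Hence after finitely many steps no further extension is possible, and at that point $G(\tilde\E)$ has no non-disc face (Case (a) fails) and no disc face with fewer than $3$ edges (Case (b) fails), which is precisely the assertion that $G(\tilde\E)$ is cellular with each face having at least $3$ edges. I would note the conventions for $1$-sided loops from the paragraph preceding the statement: a face bounded by a single $1$-sided loop already counts as having $2$ edges, so the ``$3$ edges'' conclusion is genuinely a constraint to be achieved and Case (b) must handle the $1$-sided bookkeeping.

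\emph{The main obstacle} I anticipate is verifying that the newly constructed loop in each case is genuinely \emph{nonhomotopic} to all the existing loops (and to their reverses, per the paper's homotopic-pair convention), not merely essential. Essentiality is easy from nonseparation; nonhomotopy requires a more careful topological argument — intuitively the new loop is trapped inside a single face and cannot slide across any existing loop edge without crossing it — and this is where one must argue with the complementary regions and the fundamental group of $\M$ relative to the cut surface. I would expect to handle this by observing that contracting the loop family $G(\E)$ does not identify the new loop's class with any old class, or equivalently by tracking the effect on the cut-open surface $\M \setminus G$, whose components carry the homotopy information that distinguishes the new loop.
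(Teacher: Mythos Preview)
Your extension-and-termination outline matches the paper's, and you rightly flag nonhomotopy of the new loop as the delicate point. But there are two genuine gaps.

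\textbf{Termination is circular.} You invoke Theorem~\ref{t:loopsgenusbound} to bound the number of loops and hence halt the process. In this paper, however, Theorem~\ref{t:loopsgenusbound} is \emph{derived from} Theorem~\ref{t:extensionlemma}: see the sentence immediately following its statement, and the proof at the end of Section~\ref{s:theBEthm}, which opens ``By the previous theorem we may assume\dots''. The paper's non-circular termination for Case~(a) is by genus reduction: the added loop $e_*$ is chosen to pass through a handle or crosscap of $U$, so $U$ is replaced by one or two faces each of strictly smaller genus than $U$, and this can repeat only finitely many times.

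\textbf{Case~(b) fails as stated.} If $U$ is an open disc, then \emph{every} loop based at $v$ and lying otherwise in $U$ bounds a sub-disc of $U$ and is therefore null-homotopic in $\M$. The ``at most $2$ edges'' hypothesis does nothing to change the contractibility of arcs inside a disc, so your proposed new loop cannot be essential. The paper does not handle this case by a further extension at all. Rather, once every face has genus~$0$ it argues directly that no face can have fewer than $3$ edges: a monogon would force its bounding loop to be null-homotopic, contradicting essentiality; a bigon bounded by two distinct $2$-sided loops would make those two loops homotopic, contradicting the pairwise-nonhomotopic hypothesis; and the residual $2$-edge possibility---a disc whose boundary walk is a single $1$-sided loop traversed twice---is exactly where the exclusion of the projective plane is invoked. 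So Case~(b) should be a short verification on the output of Case~(a), not another loop insertion.
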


\begin{proof}[Proof sketch]
Let $U$ be a face of the embedded multigraph $G(\E)$ of $\E$. 
The idea of the proof is to introduce a loop in $U\cup \{v\}$ in order to extend $\E$ to a similar such family where $U$ has been  replaced by 1 or 2 faces that have lower genus. 

If $g(U)$ is positive then  
viewing $U$ as an open surface of finite genus, there exists a loop in $U$ that passes through either 
a handle or a cross cap of $U$. 
This loop may modified to give a loop, $e_*$ say, in  $\{v\}\cup U$, with the relative topology, that passes through $v$ and the handle or cross cap. 
It follows that $e_*$ is an essential loop in $\M$. Also it is not homotopic to any loop of $\E$ that is in the closure of $U$. This follows formally on considering the fundamental group of the  closure of $U$ in $\M$. Further considerations shows that $e_*$ is not homotopic to any loop in $\E$.
For the extended set $\E'=\E\cup\{e_*\}$ with embedded graph $G(\E')$ the face $U$  has been replaced by 1 or 2 faces and it follows from the definition of $g(U)$ that they have strictly lower genus.

Repeating this argument sufficiently often leads to an extended set  $\tilde{\E}$ for which the associated embedded graph has all faces 
of genus 0. Further examination, together with the assumption that $\M$ is not the projective plane, shows that each face has at least 3 edges.
\end{proof}

 The general Euler formulae for an embedded graph $G$  are
\[ 
v-e+f=2-2g\quad \mbox{and} \quad  v-e+f=2-g,
\]
for, respectively, the case of orientable and nonorientable $\M$, where  $v,e$ and $f$ denote the number of vertices, edges and faces of $G$. The genus $g=g(\M)$ is defined to be the maximum number of disjoint loops whose union has a connected complement in $\M$. Moreover, it can be defined in terms of the standard models for $\M$ as the number of attached handles to a sphere when $\M$ is orientable, and as the number of attached cross caps to a sphere when $\M$ is not orientable.

\medskip

\begin{proof}[Proof of Theorem \ref{t:loopsgenusbound}.]
Let $|\E|=n$. By the previous theorem we may assume that the embedded graph determined by $\E$, with a single vertex and $n$ embedded edges, is cellular and the boundary of each face is the union of at least 3 embedded edges. 
By Euler's formula, for orientable $\M$, we have $1-e+f = 2-2g$. Also, $2e= \sum_k kf_k$ where $f_k$ is the number of faces with $k$ edges. Thus $2e \geq 3f,$ 
\[
e = 1+f+2g-2 \leq 2e/3 +2g-1
\]
and so $e\leq 6g-3$.  The genus bound in the nonorientable case is obtained in the same way.
\end{proof}

\section{Sparse graphs and girth inequalities}\label{s:sparseGraphs}

Our main interest concerns various families of embedded sparse graphs that satisfy the Maxwell count $3v-e=6$. However we also consider here families with $3v-e=\alpha$ for general  values of $\alpha$.

\begin{lemma}\label{l:alphacyclebounds} 
Let $G$ be a cellular embedded graph in  a compact 2-manifold $\M$ that  satisfies the global count $3v- e = \alpha$. 
Then
\[
\sum(k-3)f_k= \alpha+3\mu g-6
\] 
where $\mu=2$ if $\M$ is orientable and $\mu=1$ otherwise.
\end{lemma}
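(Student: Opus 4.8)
The plan is to compute the Euler characteristic contribution of the faces using the edge-face incidence relation, exactly in the spirit of the proof of Theorem~\ref{t:loopsgenusbound}, and then substitute the Maxwell-type count $3v-e=\alpha$ together with the appropriate Euler formula. First I would let $f_k$ denote the number of faces of $G$ with exactly $k$ edges (counting a $1$-sided edge twice, as in the conventions fixed just before Theorem~\ref{t:extensionlemma}, though for a simple cellular embedding this subtlety does not arise). Since $G$ is cellular, the standard double-counting of edge-face incidences gives $2e=\sum_k k f_k$, and summing the faces gives $f=\sum_k f_k$. The quantity of interest, $\sum_k (k-3)f_k$, then rearranges as $\sum_k k f_k - 3\sum_k f_k = 2e - 3f$.

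Next I would invoke the Euler formula for the embedding. Writing $\mu=2$ in the orientable case and $\mu=1$ in the nonorientable case, both formulae in the excerpt combine to $v-e+f = 2-\mu g$, so that $f = 2-\mu g - v + e$. Substituting this into $2e-3f$ yields
\[
\sum_k (k-3)f_k = 2e-3f = 2e-3(2-\mu g - v + e) = 3v - e - 6 + 3\mu g.
\]
Finally I would apply the hypothesis $3v-e=\alpha$ to replace $3v-e$, obtaining
\[
\sum_k (k-3)f_k = \alpha + 3\mu g - 6,
\]
which is exactly the claimed identity.

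I do not expect any serious obstacle here: the argument is a direct linear manipulation of two exact counting identities (edge-face incidence and Euler) together with the given global count. The only point requiring a little care is bookkeeping with the edge-count convention for $1$-sided (M\"obius-type) loop edges, so that the relation $2e=\sum_k k f_k$ holds with $k$ defined as in the paper; once that convention is used consistently the identity $\sum_k k f_k = 2e$ holds verbatim, and the rest is algebra. I would also note in passing that no assumption beyond cellularity and the global count is needed, and that the result recovers the sphere case ($\alpha=6$, $g=0$) giving $\sum_k(k-3)f_k=0$, i.e.\ every face is a triangle, as a useful sanity check.
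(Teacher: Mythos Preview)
Your proof is correct and follows essentially the same approach as the paper: both combine the edge-face incidence relation $2e=\sum_k kf_k$ with the Euler formula $v-e+f=2-\mu g$ and the global count $3v-e=\alpha$, differing only in the order of algebraic substitutions. Your presentation is slightly more expansive, but the content is identical.
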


\begin{proof} We have $2e= \sum_k kf_k$ and $3(2-\mu g)=3v-3e+3f$ and so
\[
6-3\mu g = e+\alpha -3 e + 3\sum f_k = \alpha   -\sum kf_k+ 3\sum f_k,
\]
and the identity follows.
\end{proof}

In particular, for such cellular graphs there is a bound on both the size and number of faces with more than 3 edges and this bound  depends only on $\alpha$ and the genus of $\M$.

Let $\F(\M)$ be a family of simple embedded graphs for the compact 2-manifold $\M$. 
Then $G$ in $\F(\M)$ is said to be \emph{contraction-minimal} if for every edge $e$ belonging to two facial 3-cycles, and no other 3-cycles, the contracted embedded graph $G/e$ does not belong to $\F(\M)$. 

The Barnette-Edelson theorem shows that there are finitely many contraction-minimal embedded graphs in the family $\T(\M)$ of triangulations of $\M$. For a simple corollary, consider the family $\T(\M,n_4,\dots ,n_r)$ of \emph{partial triangulations} that are cellular embedded simple graphs $G$ in $\M$ that have $n_k$ faces with closed boundary walks of length $k$, for $4\leq k\leq r$.
We extend earlier terminology by defining a 3-cycle of $G$ to be  \emph{planar} if it is the boundary cycle of an open disc in $\M$ that contains no nontriangular faces of $G$. 
In particular, a 3-cycle that is not planar is an essential 3-cycle.


\begin{thm}\label{t:partialtriangulations} There are finitely many  contraction-minimal embedded graphs in the family 
$\T(\M,n_4,\dots , n_r)$.
\end{thm}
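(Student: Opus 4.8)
The plan is to mimic the induction scheme used for the Barnette--Edelson theorem (Theorem~\ref{t:main}), adapting the cutting-and-capping argument to keep track of the prescribed nontriangular faces. The key structural observation is that a graph $G$ in $\T(\M,n_4,\dots,n_r)$ has a controlled number of nontriangular faces: by Lemma~\ref{l:alphacyclebounds}, since $G$ is cellular and the face data $n_4,\dots,n_r$ are fixed, both the number and the total size of the nontriangular faces are bounded in terms of $\M$ alone. The induction will be on the genus of $\M$, and the crux is to bound the number of edges of $G$ that do not lie on an essential $3$-cycle, so that Lemma~\ref{l:simpleLemma} (suitably reinterpreted for partial triangulations) forces $G$ to arise from a contraction-minimal graph by a bounded number of vertex-splitting moves.

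First I would set up the cutting operation. As in the proof scheme, one cuts $\M$ along an essential $3$-cycle $c$ of $G$ and caps the resulting boundary component(s) with a small triangulated disc, producing a partial triangulation $G_1$ (or a pair $G_1,G_2$) of a lower-genus surface $\M_1$ (or $\M_1,\M_2$). The nontriangular faces of $G$ are inherited unchanged by the derived graph(s), so $G_1$ lies in a family $\T(\M_1,n_4',\dots,n_r')$ with the same total count of nontriangular faces distributed between the pieces; there are only finitely many such distributions. By the induction hypothesis each such family on a lower-genus surface has finitely many contraction-minimal members. It then remains to show that $G_1$ is close to contraction-minimal, in the sense that the number of its edges failing to lie on an essential $3$-cycle is bounded by a genus-dependent constant.

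The heart of the argument is exactly that bound, and I expect it to be the main obstacle. The degree bound of Corollary~\ref{c:degreegenusbound} is the key tool: an edge of $G_1$ that becomes shrinkable after capping must lie on a $3$-cycle in $\M$ passing through a vertex of the cutting cycle $c$, and the degrees of these three vertices are bounded by $cg$. Hence the number of such ``new'' shrinkable edges is at most $dg$ for some constant $d$. However, Corollary~\ref{c:degreegenusbound} is stated for genuine triangulations, so I would need to verify that the same degree bound holds for partial triangulations. The cleanest route is to observe that, away from the boundedly many nontriangular faces, $G$ looks locally like a triangulation, so the argument of Corollary~\ref{c:degreegenusbound}---producing, at a high-degree vertex, many pairwise nonhomotopic loops contradicting Theorem~\ref{t:loopsgenusbound}---applies verbatim to any vertex not incident with a nontriangular face, and the boundedly many remaining vertices contribute only a bounded extra count. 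A minor technical point is that Lemma~\ref{l:4impliesShrinkable} must be invoked in a form applicable to partial triangulations, i.e.\ one uses that only essential (nonplanar) $3$-cycles matter and that planar $3$-cycles bounding fully triangulated discs still force shrinkable edges inside.

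Finally I would assemble the pieces: the bounded count of nonessential-$3$-cycle edges in $G_1$, combined with the partial-triangulation analogue of Lemma~\ref{l:simpleLemma}, shows $G_1$ is obtained from a contraction-minimal graph of its family by at most $dg$ vertex-splitting moves. Since there are finitely many contraction-minimal graphs on each lower-genus piece (induction), finitely many ways to distribute the face data, and finitely many ways to perform a bounded sequence of splitting moves and re-glue along the cut, there are finitely many possibilities for $G$. The base case is the sphere, where $\T(S^2,n_4,\dots,n_r)$ has finitely many contraction-minimal members by a direct argument using Lemma~\ref{l:alphacyclebounds} and the bounded total face size. This completes the induction and hence the theorem.
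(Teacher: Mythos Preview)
Your approach is a genuinely different route from the paper's, and it is worth contrasting them. The paper does \emph{not} redo the genus induction. Instead it reduces directly to the Barnette--Edelson theorem as a black box: given a contraction-minimal $G$ in $\T(\M,n_4,\dots,n_r)$, one triangulates each nontriangular face by adding a single interior vertex joined to the boundary walk, obtaining a full triangulation $G^+\in\T(\M)$. Every edge of $G^+$ that is not one of the added edges or a boundary edge of a nontriangular face already lies on two facial $3$-cycles of $G$, and hence (by contraction-minimality of $G$) on a nonfacial $3$-cycle. So at most $N=2\sum_k n_k$ edges of $G^+$ fail to lie on a nonfacial $3$-cycle, and Lemma~\ref{l:simpleLemma} applied to $G^+$ gives finiteness immediately from Theorem~\ref{t:main}. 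This is a one-paragraph proof.

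Your cut-and-cap induction could in principle be pushed through, but it is substantially more laborious and, as you yourself flag, requires nontrivial adaptations of Corollary~\ref{c:degreegenusbound} and Lemma~\ref{l:4impliesShrinkable} to partial triangulations. These adaptations are not automatic: a vertex incident to a nontriangular face can have high degree, and the loop-shrinking argument behind the degree bound needs the local triangulated-disc structure around $v$. More seriously, your base case is not actually established. On the sphere there are no essential $3$-cycles, so your cutting step is vacuous, and the phrase ``a direct argument using Lemma~\ref{l:alphacyclebounds} and the bounded total face size'' does not bound $|V(G)|$; it only bounds the face data, which you already know. To handle the sphere you would end up completing $G$ to a triangulation and invoking Lemma~\ref{l:simpleLemma} --- that is, you would rediscover the paper's trick anyway. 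The moral is that the face-filling reduction is not just shorter but is essentially forced once you try to settle the genus-zero case.
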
 

\begin{proof}
Let $G$ be contraction-minimal in $\T(\M,n_4,\dots ,n_r)$. Then every edge on 2 facial 3-cycles of $G$ lies on a nonfacial 3-cycle of $G$. Add a vertex and $k$ edges to each face of $G$ with a closed boundary walk of length $k\geq 4$ to obtain an embedded simple graph $G^+$ in $\T(\M)$. Let $\S$ be the set of these added edges together with the set of edges that belongs to these boundary walks of $G$.
Then $|\S|$ is no greater than $N=2n_4+\dots +2n_r$. Any other edge of $G^+$ is an edge of $G$ in the boundaries of 2 adjacent facial 3-cycles  and so must lie on a nonfacial 3-cycle of $G$. 
It follows from Lemma \ref{l:simpleLemma} that $G^+$ is obtained from a minimal triangulation of $\M$ by at most $N$ vertex-splitting moves and so, by the Barnette-Edelson Theorem, the embedded graphs $G^+$, and hence the embedded graphs $G$, are finite in number.
\end{proof}



\subsection{Girth inequalities.}
Let $c$ be a closed walk in an $\M$-embedded simple graph $G$. Then
$c$ is a \emph{planar type closed walk} if, as an embedded graph, $c$ has a face $U$, homeomorphic to the open disc, for which $c$ is the boundary walk. We refer to $U$ as a \emph{cellular face}. 

\begin{definition}\label{d:cAndGirthIneq} Let $G$ be a simple embedded cellular graph for the compact 2-manifold $\M$ with nontriangular faces $W_1, \dots , W_r$.
Then a planar type closed walk  $c$ in $G$ with cellular face $U$ containing a subset of the nontriangular faces \emph{satisfies the girth inequality} if
\[
|c|-3 \geq \sum_{k\in I(U)}(|c_k|-3),
\]
where $I(U)$ is the index set for the boundary walks $c_k$ of the  faces $W_k$ that are contained in $U$. When equality holds $c$ is said to be a \emph{critical planar type closed walk for $G$}. 
\end{definition}

\begin{definition}\label{d:planarGI}
A cellular embedded graph $G$ in a compact 2-manifold $\M$ satisfies the \emph{planar girth inequalities} if every planar type closed walk $c$ in $G$ with cellular face $U$ satisfies the girth inequality. 
\end{definition}


Write $\G_{\rm pl}(\M,\alpha)$ for  the family of simple cellular embedded graphs $G$ in $\M$ that satisfy the planar girth inequalities and the freedom count $f(G)=3v-e=\alpha$.
Lemma \ref{l:alphacyclebounds} implies that for values $\alpha<6$ this family can be empty for low genus surfaces $\M$.


For a planar type walk $c$ with open disc face $U$ let $\Int_G(c)$, the \emph{interior graph} of $c$, be the subgraph of $G$ formed by the union of the edges of $c$ and the edges of $G$ that meet $U$. Also let $\Ext_G(c)$, the \emph{exterior graph} of $c$, be the subgraph of $G$ whose edges and vertices do not meet $U$. 

\begin{lemma}\label{l:Ext_G(c)}
Let $c$ be a planar type closed walk for the cellular embedded graph $G$ with $f(G)=\alpha$. Then the following assertions are equivalent.
\medskip

(i) $c$ satisfies the girth inequality.

(ii) $f(\Ext_G(c))\geq \alpha.$
\medskip

In particular $c$ is a critical planar type closed walk for $G$ in $\G_{\rm pl}(\M,6)$ if and only if $f(\Ext_G(c))=6.$
\end{lemma}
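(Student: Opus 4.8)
The plan is to reduce both assertions to a single identity expressing the exterior freedom count in terms of $\alpha$, the walk length $|c|$, and the boundary lengths $|c_k|$ of the interior nontriangular faces. Concretely, I would establish
\[
f(\Ext_G(c)) = \alpha + (|c|-3) - \sum_{k\in I(U)}(|c_k|-3).
\]
Granting this, assertion (ii), namely $f(\Ext_G(c))\geq\alpha$, is equivalent to $(|c|-3)-\sum_{k\in I(U)}(|c_k|-3)\geq 0$, which is precisely the girth inequality of assertion (i); and the equality $f(\Ext_G(c))=\alpha$ corresponds exactly to equality in the girth inequality. Taking $\alpha=6$ then yields the criticality statement immediately.

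To prove the displayed identity I would apply Lemma \ref{l:alphacyclebounds} twice, exploiting that the genus $g$ and the constant $\mu$ are the same for $G$ and for $\Ext_G(c)$, since both are embedded in the one surface $\M$. First I would observe that $\Ext_G(c)$ is itself a cellular embedded graph in $\M$: deleting from $G$ every vertex and edge meeting the open disc $U$ merges all faces of $G$ lying inside $U$ into the single disc face $U$, whose boundary walk is $c$ of length $|c|$, while each face of $G$ in $\M\setminus\overline{U}$ survives unchanged as a disc face. Thus the faces of $\Ext_G(c)$ are $U$ together with the exterior faces of $G$, all homeomorphic to discs, so the embedding is cellular. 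Lemma \ref{l:alphacyclebounds} applied to $G$ gives $\sum_{k=1}^{r}(|c_k|-3)=\alpha+3\mu g-6$, while applied to $\Ext_G(c)$ it gives $(|c|-3)+\sum_{j\notin I(U)}(|c_j|-3)=f(\Ext_G(c))+3\mu g-6$, because the nontriangular faces of $\Ext_G(c)$ are the exterior faces $W_j$ with $j\notin I(U)$ together with the face $U$ of boundary length $|c|$. Subtracting the first identity from the second cancels the common term $3\mu g-6$ and, using $\sum_{j\notin I(U)}(|c_j|-3)=\sum_{k=1}^r(|c_k|-3)-\sum_{k\in I(U)}(|c_k|-3)$, leaves exactly the required formula.

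The main obstacle is the bookkeeping in the second application of Lemma \ref{l:alphacyclebounds}: I must verify carefully that deleting the interior data really does leave $U$ as a single disc face bounded by $c$ without altering or merging any exterior face, and that this persists when $c$ is a genuine closed walk rather than a simple cycle, so that $|c|$ counts edge-steps with multiplicity in agreement with the face-length convention used in Lemma \ref{l:alphacyclebounds}. Since $U$ is a disc face of $G$ with boundary walk $c$ by the definition of a planar type closed walk, and since Lemma \ref{l:alphacyclebounds} carries no simplicity hypothesis, this verification is routine and the remaining step is the one-line subtraction above. I would avoid the alternative route through inclusion–exclusion, $f(G)=f(\Int_G(c))+f(\Ext_G(c))-f(c)$, precisely because it forces an explicit and delicate computation of $f(c)=3|V(c)|-|E(c)|$ for non-simple walks, whereas the two-fold application of Lemma \ref{l:alphacyclebounds} sidesteps this entirely; a reassuring sanity check is that the whole argument is just Euler's formula applied to $G$ and to $\Ext_G(c)$ on the same surface, with the genus terms cancelling by design.
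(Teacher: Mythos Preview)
Your proposal is correct and follows essentially the same approach as the paper: apply Lemma~\ref{l:alphacyclebounds} once to $G$ and once to the cellular embedded graph $\Ext_G(c)$, then subtract the two identities so that the common term $3\mu g - 6$ cancels, leaving the equivalence of (i) and (ii). The paper's proof is terser and omits your explicit verification that $\Ext_G(c)$ is cellular with $U$ as a single disc face, but the argument is the same.
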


\begin{proof}
By Lemma \ref{l:alphacyclebounds} we have
\[
\sum_{k\geq 4}(k-3)f_k= 3\mu g +f(G)-6
\] 
and so
\[
\sum_{k\in I(U)}(|c_k|-3) +\sum_{k\notin I(U)}(|c_k|-3)= 3\mu g+f(G)-6.
\] 
For the cellular embedded graph $\Ext_G(c)$ we have, similarly,
\[
|c|-3 +\sum_{k\notin I(U)}(|c_k|-3)= 3\mu g +f(\Ext_G(c))-6
\] 
and so the equivalence follows.
\end{proof}


\begin{definition}\label{d:36tight} 
A graph $H$ is \emph{$(3,6)$-sparse} if it satisfies the local count 
$f(H')\geq 6$ for each subgraph $H'=(V',E')$ with at least 3 vertices and is 
\emph{$(3,6)$-tight} if in addition $f(H)=6$. The family $\F(\M,6)$ is the family of cellular embedded graphs $G$ in $\M$ whose underlying graphs are $(3,6)$-tight.  
\end{definition}

It follows from the lemma that if $G$ is $(3,6)$-tight (meaning that the underlying graph has this property) then $G$ belongs to $\G_{\rm pl}(\M,6)$. 
The equality $\F(\M,6)=\G_{\rm pl}(\M,6)$ does not hold in general as the following example shows.

By the formula above if the torus is partially triangulated with a single nontriangular face with a closed walk $c$ of length $r$ then $f(G)= r-3$. Thus $r=9$ if $G$ is in $\F(\M,6)$. Figure \ref{f:doubledonut1} shows a double torus $\M$, with $g(\M)=2$, and two excised open discs (shaded).
We consider triangulations $G$ of this surface with boundary wherein the two nontriangular faces have  
boundary cycles of lengths 8 and 10 and where a 3-cycle subgraph $K_3$ of $G$ is embedded as an essential 3-cycle, as shown. The embedded graph $G$ can be viewed as the join of two partial triangulations $G_1, G_2$ of two tori, where the underlying graph $H$ of $G$ is the join over a common 3-cycle of the underlying graphs $H_1, H_2$ of $G_1, G_2$. We have $f(G)=6$ and $f(G_1)=5, f(G_2)=7$, and so any such embedded graph $G$ is not $(3,6)$-tight.

We may, in addition, specify the triangulation $G$ by triangulatiions of $G_1$ and $G_2$ so that 
there is no planar type closed walk in $G_1$ (resp. $G_2$) around the single nontriangular face with length smaller than 8 (resp. 10). In this case note that if a closed walk in $G$ containing just the 8-sided face has a subwalk in $G_2$, then we may assume the subwalk is maximal and starts and ends at vertices in the embedded $K_3$. This implies that there is a shorter such path entirely in $G_1$. Similar remarks apply to closed walks around the 10-sided face. This means that the 
girth inequalities for $G$ can only fail if there is a planar type closed walk whose face contains both nontriangular faces and whose length is less than 15. However, such a walk includes 2 of the vertices of the common 3-cycle and so, by our assumption for $G_1, G_2$, its length is at least (8-1)+(10-1)=16. Thus the girth inequalities hold and we conclude that $\G_{\rm pl}(\M,6)$ is strictly larger than
 $\F(\M,6)$.
 \begin{center}
\begin{figure}[ht]
\centering
\includegraphics[width=7cm]{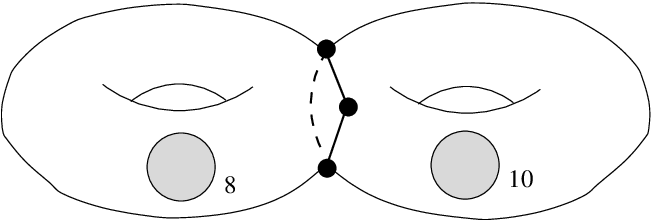}
\caption{Nontriangular faces of a cellular graph in the double torus with boundary cycles of lengths 8 and 10.} 
\label{f:doubledonut1}
\end{figure}
\end{center}


\subsection{Critical patches and contractible edges} 
In Lemma \ref{l:lensLemma} we obtain a bound for the size of certain triangulated discs in a contraction-minimal embedded graph $G$ satisfying girth inequalities. It plays a role for critical closed walks similar to that of Lemma \ref{l:4impliesShrinkable} for essential 3-cycles. 
The following preliminary lemma is due to Fisk and Mohar \cite{fis-moh}. See also Lemma 5.4.2 of \cite{moh-tho}. For completeness we give the short proof. 

\begin{lemma}\label{l:fiskMohar}
Let $k\geq2, r\geq 2$ and let $H_k$ be a simple graph that is the union of the edges and vertices of a set of paths $\pi$ between vertices $a$ and $b$ where the initial edges of the paths are distinct. If the size of the set, $N(k)$, is at least $r^{k-1}(k-1)!$ then there is a vertex $z$ and $r$ paths $\pi$ with internally disjoint subpaths from $a$ to $z$.
\end{lemma}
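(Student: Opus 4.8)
The plan is to argue by induction on $k$, which I read as a bound on the lengths of the paths in the family; in fact I would prove the marginally more flexible statement in which each path from $a$ to $b$ has length \emph{at most} $k$, since the inductive step naturally produces families of unequal, shorter lengths. The base case $k=2$ is immediate: distinct initial edges force distinct first interior vertices in a simple graph, so any two paths of length at most $2$ are automatically internally disjoint, and with $N(2)\ge r$ any $r$ of them furnish $r$ internally disjoint subpaths from $a$ to $z=b$.

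For the inductive step I would first extract a \emph{maximal} subfamily $\D$ of the given paths that are pairwise internally disjoint. If $|\D|\ge r$ then $z=b$ works, taking the full paths as the subpaths, and we are done. Otherwise $|\D|\le r-1$, and I let $W$ be the union of the interior vertices of the members of $\D$; since each path has at most $k-1$ interior vertices, $|W|\le (r-1)(k-1)$. The role of maximality is that every remaining path must meet $W$ internally, for otherwise it could be adjoined to $\D$; hence in fact all $N$ of the paths meet $W$.

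I would then assign to each path the first vertex $w\in W$ it reaches from $a$, and pigeonhole over the at most $(r-1)(k-1)$ possibilities. This produces a subfamily of size at least $N/((r-1)(k-1))\ge r^{k-1}(k-2)!/(r-1)\ge r^{k-2}(k-2)!$, all of whose members first meet $W$ at the same vertex $w$. Their initial segments from $a$ to $w$ still have distinct initial edges, form a family of the same shape with common endpoints $a$ and $w$, and—crucially—have length at most $k-1$, because $w$ is an interior vertex of a member of $\D$ and so differs from $b$. Applying the inductive hypothesis to these initial segments yields a vertex $z$ and $r$ of them with internally disjoint subpaths from $a$ to $z$; as these are subpaths of the original paths, the conclusion follows.

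The step I expect to be delicate is keeping the recursion honestly inside subpaths of the \emph{given} paths while simultaneously strictly decreasing the length bound and preserving the counting bound with the correct constant. The arithmetic closes only because the pigeonhole loses a factor of exactly $(r-1)(k-1)$, which matches the ratio $N(k)/N(k-1)=r(k-1)$ up to the harmless factor $r/(r-1)>1$; and the reduction from $k$ to $k-1$ depends on $w\neq b$, which is guaranteed precisely by drawing $W$ from interior vertices. Checking that maximality of $\D$ really forces every path to meet $W$ is the conceptual crux, and it is exactly what lets one bypass any appeal to a Menger-type connectivity theorem.
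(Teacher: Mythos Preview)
Your proof is correct and follows essentially the same approach as the paper's: induction on $k$, a maximal internally disjoint subfamily, maximality forcing every other path to hit the pool $W$ of interior vertices, pigeonhole to a common vertex, and recursion on the initial segments. The only cosmetic differences are that the paper pigeonholes in two stages (first over the $q<r$ paths, then over the $k$ interior vertices of the chosen one) rather than your single step over $|W|\le (r-1)(k-1)$, and that you make the ``length at most $k$'' formulation explicit, which the paper's recursion also implicitly requires.
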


\begin{proof}
Note that $N(2)=r$. Suppose, by way of induction, that $N(k)$ exists as required. Let $\pi_*$ be a path of $H_{k+1}$ and suppose that there are $kN(k)$ paths for $H_{k+1}$ that meet an  internal vertex of $\pi_*$ (distinct from $a, b$). Then there are $N(k)$ paths for $H_{k+1}$ that meet a common internal vertex $z$  and so the existence of the desired $r$ subpaths for $H_{k+1}$ holds in this case. 
Now let $\pi_1, \dots , \pi_q$ be a maximal subset of the paths for $H_{k+1}$ that are internally disjoint. If $q\geq r$ this subset suffices for the conclusion so we may assume $q< r$. Since every path in the set for $H_{k+1}$ has, by the maximality, a common internal vertex with one of the paths $\pi_j$, by the previous case if $N(k+1) = rkN(k)$ the desired conclusion holds. This completes the proof by induction.
\end{proof}

We assume now that $G$ is a contraction-minimal graph in $\G_{\rm pl}(\M,\alpha)$.
By Lemma \ref{l:Ext_G(c)} every edge $e$ that lies on two facial 3-cycles, either lies on an essential  3-cycle or lies on a critical planar type closed walk. Indeed, since 
there are no planar type 3-cycles in $G$ other than facial 3-cycles (as noted in the introduction) if this is not the case then the contracted embedded graph $G/e$ is simple and the girth inequalities hold.

Let $c_1, c_2$ be essential 3-cycles or a critical planar type walks in $G$, with $G$ in $\G_{\rm pl}(\M,\alpha)$ and 
suppose that $\pi_1$ and $\pi_2$ are subwalks of $c_1, c_2$, respectively, from $v$ to $w$, and that $v, w$ are the only vertices common to $\pi_1, \pi_2$.  Moreover suppose that the concatenation of $\pi_1$ and the reversal of $\pi_2$ gives the  boundary walk of an open disc that contains only triangular faces of $G$. We denote the triangulated subgraph determined by these faces as $P(\pi_1, \pi_2)$ and refer to it as a \emph{critical patch} of $G$.
It follows that $|\pi_1|=|\pi_2|$, since otherwise the substitution of the shorter subwalk for the longer one would give a violation of the girth inequalities. Let the common length be $d$  and note that, for the same reason, every
path $\pi$ of edges in the patch from $v$ and $w$ has length at least $d$. 

\begin{lemma}\label{l:lensLemma} Let $P=P(\pi_1,\pi_2)$ be a critical patch for the contraction-minimal graph $G$. Then there is an independent bound 
for the number of edges in $P$.
\end{lemma}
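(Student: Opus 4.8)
The plan is to show that a critical patch cannot be too large by producing, in any sufficiently large patch, an edge that is forced to be contractible, contradicting contraction-minimality. First I would record the two genus bounds that feed the argument. The boundary walks $\pi_1,\pi_2$ are subwalks of essential $3$-cycles or critical planar type walks $c_1,c_2$; essential $3$-cycles have length $3$, and by Lemma \ref{l:alphacyclebounds} every critical planar type walk has length at most $L:=\alpha+3\mu g-3$, since its excess $|c|-3$ is dominated by the total excess $\sum_{k\ge 4}(|c_k|-3)=\alpha+3\mu g-6$ of all nontriangular faces. Hence the common length $d=|\pi_1|=|\pi_2|$ is genus-bounded, so the boundary $\partial P=\pi_1\cup\pi_2$ has genus-bounded length $2d$. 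The same lemma bounds both the number and the size of the nontriangular faces, hence the number of critical walks and their lengths; consequently the set of edges of $G$ that lie on some critical planar type walk is genus-bounded in size.

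The conceptual core is a trapping principle. Since $P$ is a triangulated open disc containing only triangular faces, any $3$-cycle all of whose vertices are interior vertices of $P$ lies inside this disc, so it bounds a subdisc of $P$ and is therefore a planar $3$-cycle; as $G$ has no nonfacial planar $3$-cycles, such a $3$-cycle is facial. More generally, any essential $3$-cycle or critical planar type walk through an edge $e$ has length at most $L$, so if both endpoints of $e$ lie at distance greater than $L$ from $\partial P$ then the whole of this cycle or walk is trapped inside $P$; a trapped cycle is facial by the previous remark, and a trapped critical walk, having a cellular face with no nontriangular faces, must by criticality have length $3$ and so is again facial. Thus an edge deep inside $P$ lies on no essential $3$-cycle and on no critical walk, and by the contraction-minimality consequence recorded via Lemma \ref{l:Ext_G(c)} it is contractible --- impossible. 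This shows every vertex of $P$ lies within distance $L+1$ of $\partial P$, exactly in the role played by Lemma \ref{l:4impliesShrinkable} for nonplanar $3$-cycles.

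Bounded depth together with bounded boundary is not yet enough, since a ``bipyramid'' patch of bounded depth can have unboundedly many vertices, so the final and hardest step uses the width property that every $v$--$w$ path in $P$ has length at least $d$. Here I would invoke Lemma \ref{l:fiskMohar}: if the patch had more than a genus-dependent number of edges, then the fans of triangles along $\partial P$ would supply more than $r^{k-1}(k-1)!$ paths from $v$ to $w$ with distinct initial edges, yielding $r$ internally disjoint subpaths from $v$ to a common vertex $z$. Since these subpaths are nested in the disc and each full $v$--$w$ path has length at least $d$, a middle subpath is separated from $\partial P$ by the others and therefore carries an edge whose endpoints lie at distance greater than $L+1$ from $\partial P$ once $r$ exceeds a genus bound; such an edge is deep, hence contractible, contradicting the previous paragraph. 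The main obstacle is precisely this last step: making rigorous that a large patch forces the Fisk--Mohar hypothesis, and that the resulting internally disjoint paths genuinely produce a deep edge, while ensuring the edge so produced is not among the genus-boundedly many edges that lie on critical walks. Once this is done, the number of edges of $P$ is bounded by a quantity depending only on $d$, $L$ and the Fisk--Mohar threshold, all of which are genus-bounded, giving the required independent bound.
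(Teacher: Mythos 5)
Your preliminary reductions (the bound $L$ on the length of a critical walk via Lemma \ref{l:alphacyclebounds}, and the trapping principle showing a sufficiently deep interior edge of $P$ would be contractible) are sound, and the trapping principle is indeed the analogue of Lemma \ref{l:4impliesShrinkable} that the paper intends. But the step that must actually produce the contradiction fails, and it fails on exactly the example you raise. In a bipyramid-type patch with $d=2$ (unboundedly many internally disjoint $v$--$w$ paths of length $2$), every vertex lies at distance $1$ from $\partial P$, so there is \emph{no} deep edge at all, and no choice of $r$ can manufacture one. The reason your separation argument breaks is that all the internally disjoint subpaths produced by Lemma \ref{l:fiskMohar} emanate from $v$, which lies on $\partial P$: a vertex at position $j$ along such a subpath is within distance $j$ of the boundary no matter how many other subpaths surround it, so nesting never forces depth. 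There is also a problem triggering Lemma \ref{l:fiskMohar} at all: the number of $v$--$w$ paths with distinct initial edges is at most $\deg_P(v)$, and a large patch does not force the degree of $v$ to be large (the largeness can sit elsewhere), so ``many edges in $P$'' does not yield the hypothesis of that lemma. Finally, your claim that Lemma \ref{l:alphacyclebounds} bounds the number of critical walks, hence the set of edges lying on them, is circular: that lemma bounds only the number and sizes of nontriangular faces, while boundedness of the number of critical walks is Lemma \ref{l:indptbound_critwalks}, proved later and resting on Lemma \ref{l:lensLemma} itself.

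The paper's proof avoids deep edges entirely and runs in the opposite order. It first bounds the number $s$ of internally disjoint $v$--$w$ paths of length $d$ directly: for the edge $e=xy$ of a face $vxy$ with $vx$ on a middle path $\rho_i$, contraction-minimality places $e$ on an essential $3$-cycle (impossible for middle $i$, by the trapping observation) or on a critical walk $c$; the subwalk $\pi$ of $c$ inside $P$ has endpoints $a,b$ on $\partial P$, the case $\{a,b\}=\{v,w\}$ gives a length contradiction, and when $\{a,b\}\neq\{v,w\}$ the interior of $\pi$ avoids $v$ and $w$, so $\pi$ must meet a vertex of each intervening disjoint path; taking $s\geq 2M$ and $i=M$ forces $|\pi|\geq M$, contradicting the bound $|c|\leq M$. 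Lemma \ref{l:fiskMohar} is then used in the direction opposite to yours, converting this disjoint-path bound into a bound on $\deg_P(v)$, and the proof closes with an induction on $d$: the patch $P$ is decomposed into the subpatches $P(v_1,w),\dots,P(v_m,w)$ at the boundedly many neighbours of $v$, plus a remainder of at most $2m-1$ edges. Your proposal has no substitute for either the direct bound on internally disjoint paths or this final induction, and without them the bipyramid configuration is never excluded.
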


\begin{proof} 
Suppose that $\rho_1, \dots ,\rho_s$ are paths in $P$ from $v$ to $w$ of length $d$ that are internally disjoint. We first show that there is an independent bound for $s$ (depending only on $g(\M)$ and $\alpha$). 
We may assume that these paths are internally disjoint from $\pi_1, \pi_2$ and are naturally ordered as adjacent paths. Also we may assume that $s\geq 5$. Consider the edge $e=xy$ of a triangular face of the form $vxy$ where $vx$ is an edge of $\rho_i$. Figure \ref{f:lensFigure} indicates such an edge when $s=5, d=5$ and $i=3$.
\begin{center}
\begin{figure}[ht]
\centering
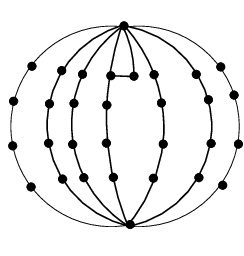
\caption{Internally disjoint paths of length 5 in the critical patch $P(\pi_1,\pi_2)$.} 
\label{f:lensFigure}
\end{figure}
\end{center}
By the contraction-minimal hypothesis $e$ lies on an essential 3-cycle of $G$ or on a critical planar type walk $c$ of $G$. The first possibility is evidently not possible if $1<i<s$. 
Consider then the subwalk $\pi$ of $c$ in the patch $P$ that contains $e$ and has vertices $a, b$ on the boundary of $P$. 
Suppose, for a contradiction, that $\{a, b\}=\{v, w\}$. Since $c$ is critical and $P$ is a critical patch the length of $\pi$ is equal to $d$. Indeed the length cannot be less than $d$ since $P$ is a critical patch and cannot be greater than $d$ since the substitution of any path $\rho_i$ for $\pi$ in $c$ gives a closed walk $c'$ of planar type whose face $U(c')$ contains the same nontriangular faces of $G$ as $U(c)$. Also $|c'|<|c|$ contradicting the girth inequality for $c'$. Thus $\{a, b\}\neq\{v, w\}$. In this case we note first that Lemma \ref{l:alphacyclebounds} implies that there is an independent bound $M$ for the length of $c$. Thus if $s\geq 2M$ and $i=M$ then there is no path of length less than $|c|$ between $a$ and $b$. And so $s$ is less than $2M$, and there is an independent bound for $s$.

It now follows from Lemma \ref{l:fiskMohar} that there is an independent bound for the degree of $v$ in $P$. 
Since the lemma is true for $d=2$ (since all paths of length 2 from $v$ to $w$ are internally disjoint) we may complete the proof by induction. Suppose that the lemma is true for $d$ less than $d+1$ and consider the neighbouring vertices $v_1, \dots , v_m$ of $v$ in $P$ where $m$ is the degree of $v$ in $P$. Then there  are independent bounds for the subpatches $P(v_1,w), \dots , P(v_m,w)$. The size of the complement of the union of these subpatches is at most $2m-1$ and so the lemma is true for $P$.
\end{proof}

Critical planar type walks $c_1, c_2$ for an embedded graph in $\G_{\rm pl}(\M, \alpha)$ are said to be \emph{equivalent} if their open disc faces $U_1, U_2$ contain the same nontriangular faces. With this notation we have the following corollary.

\begin{lemma}\label{l:boundforPinchedBoundary} Let
$G$ be contraction-minimal and suppose that $c_1, c_2 $ are equivalent critical walks that are not disjoint. Then there is an independent bound for the number of edges in $U_2\backslash U_1^-$ and $U_1\backslash U_2^-$.
\end{lemma}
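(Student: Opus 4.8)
The plan is to realise $U_2\setminus U_1^-$ as a union of boundedly many critical patches and then apply Lemma \ref{l:lensLemma} to each. First I would record the length bound coming from Lemma \ref{l:alphacyclebounds}: for any critical planar type walk $c$ with cellular face $U$ one has, since equality holds in the girth inequality,
\[
|c|-3=\sum_{k\in I(U)}(|c_k|-3)\le \sum_{k\geq 4}(|c_k|-3)=3\mu g+\alpha-6,
\]
so $|c|\le M:=3\mu g+\alpha-3$, a bound depending only on $\alpha$ and $g(\M)$. In particular $|c_1|,|c_2|\le M$, so $c_1$ and $c_2$ each carry at most $M$ edges and at most $M$ distinct vertices.

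Next I would use the equivalence and the geometry of the two discs. Since $c_1,c_2$ are equivalent, $U_1$ and $U_2$ contain the same nontriangular faces, and as these all lie in $U_1\cap U_2$, the open region $U_2\setminus U_1^-$ meets no nontriangular face and is therefore triangulated. Because $U_2$ is an open disc and the portion of $c_1$ lying in $U_2$ consists of at most $M$ edges, cutting $U_2$ along these arcs decomposes $U_2\setminus U_1^-$ into boundedly many open disc components, the ``crescents''; here the hypothesis that $c_1,c_2$ are \emph{not} disjoint is exactly what forces these components to be discs rather than an annulus. Each crescent $R$ is bounded by a subwalk $\pi_1$ of $c_1$ and a subwalk $\pi_2$ of $c_2$ running between two vertices $v,w$ of $c_1\cap c_2$; taking the components to lie between consecutive intersection vertices, I would arrange that $v,w$ are the only vertices shared by $\pi_1$ and $\pi_2$. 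As $R$ contains only triangular faces, $R=P(\pi_1,\pi_2)$ is a critical patch in the sense of the paragraph preceding Lemma \ref{l:lensLemma}, and in particular $|\pi_1|=|\pi_2|$.

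With this in place the conclusion is immediate. By Lemma \ref{l:lensLemma} each crescent $P(\pi_1,\pi_2)$ has an independently bounded number of edges, and since there are at most $M$ crescents, the total number of edges in $U_2\setminus U_1^-$ is bounded by a constant depending only on $\alpha$ and $g(\M)$. Interchanging the roles of $c_1$ and $c_2$ yields the same bound for $U_1\setminus U_2^-$, completing the proof.

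I expect the main obstacle to be the topological bookkeeping in the middle step, namely verifying that the crescents are genuinely critical patches. The two delicate points are: (i) $c_1$ and $c_2$ may overlap along shared subwalks rather than crossing transversally, so one must argue that the regions cut out between consecutive common vertices have bounding arcs meeting only at their two endpoints; and (ii) one must confirm that each crescent is a disc whose interior contains only triangular faces, which is precisely where the equivalence of $c_1,c_2$ and the ``not disjoint'' hypothesis are used. Everything after this reduction is a direct appeal to the length bound and to Lemma \ref{l:lensLemma}.
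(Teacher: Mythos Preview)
Your proposal is correct and follows essentially the same route as the paper's own proof: bound the number of components of $U_2\setminus U_1^-$ using the common-vertex hypothesis, observe that equivalence forces each component to be a triangulated disc, identify each as a critical patch $P(\pi_1,\pi_2)$, and invoke Lemma~\ref{l:lensLemma}. The paper's version is far terser (three sentences), leaving implicit the length bound $|c_i|\le M$ from Lemma~\ref{l:alphacyclebounds} that you make explicit; your honest flagging of the topological bookkeeping in step (i)--(ii) is appropriate, since the paper simply asserts that the components are discs and that their closures ``correspond to'' critical patches without further argument.
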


\begin{proof} 
If $c_1$ and $c_2$ have at least 1 common vertex then there is an independent bound for the number of connected components of $U_2\backslash U_1^-$ and $U_1\backslash U_2^-$. Since $c_1, c_2$ are equivalent these components are open discs. Since the closure of each component corresponds to a critical patch the required bound follows from Lemma \ref{l:lensLemma}.
\end{proof}

\begin{lemma}\label{l:maxCritsPatch} Let
$G$ be a contraction-minimal embedded graph  in $\G_{\rm pl}(\M, \alpha)$ and let $H\subset G$ be a triangulated polygon with a vertex $v$ whose distance to the boundary of $H$ is at least 4. Then there is an interior edge of $H$ that does not lie on a critical walk of maximal length.
\end{lemma}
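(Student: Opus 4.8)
The plan is to argue by contradiction: suppose that every interior edge of $H$ lies on a critical walk of maximal length, and write $L$ for this maximal length, which is finite by Lemma~\ref{l:alphacyclebounds}. The first step is to record a \emph{tautness} property for these walks. Since $H$ is a triangulated polygon, hence a simply connected region containing no nontriangular faces of $G$, the maximal subwalk $\pi = c\cap H$ of any critical walk $c$ lying in $H$ must be a geodesic of $H$ between its two endpoints on the boundary of $H$. Indeed, replacing $\pi$ by a strictly shorter path between the same endpoints inside $H$ produces a planar type closed walk $c'$ whose cellular face contains exactly the same nontriangular faces as that of $c$, so that $|c'| < |c|$ would contradict the criticality of $c$ (Definition~\ref{d:cAndGirthIneq}). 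Thus each maximal length critical walk meets $H$ in a family of boundary-to-boundary geodesics.

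Next I would localise at the deep vertex $v$. Each interior edge incident to $v$ lies, by assumption, on a maximal length critical walk, and the tautness property turns these into geodesics of $H$ passing through $v$; since $v$ is at distance at least $4$ from the boundary of $H$, each such geodesic has length at least $8$ with $v$ at distance at least $4$ from each of its endpoints. Ordering these geodesics cyclically about $v$, two that are adjacent in this order have equal length (both being portions of maximal length critical walks) and bound, together with the arcs meeting at $v$, a region of $H$ consisting only of triangular faces; as in the discussion preceding Lemma~\ref{l:lensLemma} this region is a critical patch. I would then feed the initial segments of these geodesics into Lemma~\ref{l:fiskMohar} to extract, from a large collection of them, many internally disjoint subpaths issuing from $v$, and appeal to Lemma~\ref{l:lensLemma}, together with Lemma~\ref{l:boundforPinchedBoundary} to handle overlapping equivalent walks, in order to bound independently of $G$ the number of maximal length critical geodesics that can pass through the ball of radius $4$ about $v$.

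The main obstacle, and the crux of the argument, is to convert this bounded fan of taut geodesics into a genuine contradiction with the depth hypothesis. Here I would exploit maximality of $L$ directly: a maximal length critical walk encloses a collection of nontriangular faces whose total deficiency $\sum(|c_k|-3)$ is as large as possible, so its taut portion through $H$ cannot be rerouted across the triangular interior near $v$ without either shortening it, which violates criticality by the tautness property, or enclosing an additional nontriangular face, which is impossible since the rerouting takes place inside the all-triangular disc $H$. Comparing adjacent geodesics of the common length $L$ in the bounded fan at $v$, I would argue that the edge joining $v$ to its neighbour lying furthest from the boundary of $H$ cannot be traversed by any such geodesic, since traversing it would force a strictly longer boundary-to-boundary geodesic than an adjacent one enclosing the same faces, contradicting tautness or maximality. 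This would contradict the assumption that this particular interior edge lies on a critical walk of maximal length, completing the proof. I expect this final rerouting-and-depth comparison to be the delicate step, playing the role that Lemma~\ref{l:4impliesShrinkable} plays for essential $3$-cycles.
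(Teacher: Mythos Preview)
Your tautness observation is correct and is indeed the starting point of the paper's argument, but from that point on your outline diverges and the final step does not go through. Invoking Lemmas~\ref{l:fiskMohar}, \ref{l:lensLemma} and \ref{l:boundforPinchedBoundary} would at best produce a bound on the number of maximal-length geodesics through $v$ that depends on $g(\M)$ and $\alpha$, not the constant $4$; so there is no reason why the fixed depth hypothesis should conflict with such a bound. Your proposed endgame, that ``the edge joining $v$ to its neighbour lying furthest from the boundary of $H$ cannot be traversed by any such geodesic,'' is not correct as stated: a taut boundary-to-boundary geodesic through $v$ may well pass through that neighbour without being longer than any adjacent geodesic, since distances to $\partial H$ from distinct neighbours of $v$ can easily coincide, and in any case the length of the geodesic is controlled by the enclosed nontriangular faces (via criticality), not by the depth of the edge it uses.

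The paper's mechanism is quite different and much more local. Fix a critical walk $c$ through $v$ using consecutive edges $av,vb$. If $v$ has two further neighbours $v_1,v_2$ on the \emph{same side} of the path $avb$, take maximal-length critical walks $c_{vv_1},c_{vv_2}$ through $vv_1,vv_2$; by subwalk substitution with $c$ one may arrange that $c_{vv_1}$ uses $bv$ and $c_{vv_2}$ uses $av$, leaving complementary subwalks $\pi_b\colon v_1\to b$ and $\pi_a\colon v_2\to a$, each of length $d-2$. The nesting of $v_1,v_2$ between $a$ and $b$ forces $\pi_a$ and $\pi_b$ to cross at some vertex $u$, and applying the planar girth inequality to the two blended closed walks through $u$ yields the pair of inequalities
\[
\gamma+(d-2-\delta)+1\ge d,\qquad \delta+(d-2-\gamma)+2\ge d,
\]
which are incompatible. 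Hence no such pair $v_1,v_2$ exists, forcing $\deg(v)=4$. The same reasoning applies to every neighbour of $v$ (each still deep enough in $H$), and a triangulated polygon cannot have a vertex together with all its neighbours of degree exactly $4$; this is the contradiction. The crossing-and-double-inequality trick is the key idea you are missing.
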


\begin{proof}
Let $va$ be an edge incident to $v$ and let $c$ be a critical walk for this edge containing the edges $av, vb$. Note that $a$ and $b$ cannot be adjacent. We assume first that $v_1, v_2$ are neighbours of $v$ distinct from $a,v,b$ as shown in Figure \ref{f:concentricCriticals}.
\begin{center}
\begin{figure}[ht]
\centering
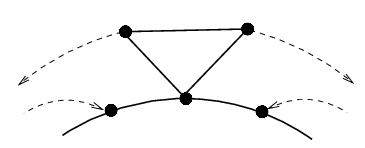
\caption{Subwalks for critical walks for $vv_1, vv_2$.} 
\label{f:concentricCriticals}
\end{figure}
\end{center} By way of contradiction suppose that $vv_1$ and $vv_2$ lie on a critical walks $c_{vv_1}$, $c_{vv_2}$ that have the maximal length $d$. By subwalk substitutions, between $c_{vv_1}$ and $c$ we may suppose  $c_{vv_1}$ contains $bv$ and that it has a subwalk $\pi_b$ of length $d-2$ from $v_1$ to $b$. Similarly we may assume there is also a subwalk $\pi_a$ of  $c_{vv_2}$ from $v_2$ to $a$ of length $d-2$. The critical walks, and therefore the subwalks, cross at a common vertex $u$. Assume that the distances from $v_1$ to $u$ and $a$ to $u$ are $\gamma$ and $\delta$ respectively. Then by the planar girth inequalities, for the blended path from $v_1$ to $u$ to $v_2$ and the blended path from $a$ to $u$ to $b$, we have
\[
\gamma + (d-2-\delta)+1 \geq d, \quad \delta +(d-2-\gamma)+2\geq d
\]
which is a contradiction. Since the configuration of Figure \ref{f:concentricCriticals} is not possible it follows that the degree of $v$ is 4. Since this conclusion also applies to all the neighbouring vertices of $v$ we obtain the desired contradiction since $H$ is a triangulated polygon.
\end{proof}

The next lemma shows how critical walks may be modified by subwalk substitutions. 

\begin{lemma}\label{l:UcapUdash}
Let $G$ belong to $\G_{\rm pl}(\M,\alpha)$ with critical walks $c, c'$ with open disc faces $U, U'$ respectively, and suppose that $U\cap U'$ has only one component, $U_0$, that contains nontriangular faces. Then the boundary walk of $U_0$ is a critical walk.
\end{lemma}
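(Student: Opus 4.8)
The plan is to treat the girth gap as a modular quantity and to pair $U_0$ with a complementary region. For a planar type closed walk $b$ with cellular face $U(b)$, write $\gamma(b)=(|b|-3)-\sum_{k\in I(U(b))}(|c_k|-3)$ for its girth gap, so that the girth inequality reads $\gamma(b)\ge 0$ and $b$ is critical exactly when $\gamma(b)=0$; by Lemma \ref{l:Ext_G(c)} this is equivalent to $f(\Ext_G(b))=\alpha$. Since $U_0$ is an open disc, $\partial U_0$ is a planar type closed walk and hence $\gamma(\partial U_0)\ge 0$, so the whole task is to establish the reverse inequality $\gamma(\partial U_0)\le 0$.

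First I would reduce to the case in which $U\cap U'$ is connected. Each component of $U\cap U'$ other than $U_0$ is a triangulated disc bounded by an arc of $c$ and an arc of $c'$ meeting at two crossing vertices, and this is precisely a critical patch; so by the equal-length principle recorded in the paragraph preceding Lemma \ref{l:lensLemma} the two bounding arcs have the same length. Substituting one arc for the other pushes $c'$ across the triangulated disc without changing $|c'|$, the criticality of $c'$, or the index set $I(U')$, and removes that component of the intersection. Inducting on the number of such components, I may assume $U\cap U'=U_0$.

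The core computation is then the disc case. When $c$ and $c'$ cross in exactly the two vertices on $\partial U_0$, the region $U\cup U'$ is an open disc with boundary walk $d$, and one has the bookkeeping identities $I(U_0)=I(U)\cap I(U')$ and $I(U\cup U')=I(U)\cup I(U')$, together with the edge partition $|\partial U_0|+|d|=|c|+|c'|$ (each edge of $c$ and of $c'$ lies either on $\partial U_0$ or on $d$). Inclusion--exclusion on the face sums then yields
\[
\gamma(\partial U_0)+\gamma(d)=\gamma(c)+\gamma(c').
\]
Since $d$ bounds the disc $U\cup U'$ it too satisfies $\gamma(d)\ge 0$, while $\gamma(c)=\gamma(c')=0$ by criticality; hence $\gamma(\partial U_0)+\gamma(d)=0$ with both summands nonnegative, forcing $\gamma(\partial U_0)=0$, which is the assertion.

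The main obstacle is the crossing topology that the reduction must reach. The clean identity above relies on $U\cup U'$ being a disc, i.e.\ on $c$ and $c'$ crossing exactly twice around $U_0$; when $\partial U_0$ meets $c$ and $c'$ in several alternating arcs the union $U\cup U'$ acquires extra boundary (it becomes an annulus, or worse), $d$ is no longer a planar type walk, and $\gamma(d)\ge 0$ fails. Indeed, a direct count in the presence of $n$ superfluous intersection components $U_i$ returns a relation of the form $\gamma(\partial U_0)+\gamma(d)+\sum_{i\ge 1}\gamma(\partial U_i)=-3n$, which is consistent only because the non-disc boundary $d$ carries a negative defect. Handling this cleanly is the crux: I would either iterate the equal-length uncrossing until $\partial U_0$ consists of a single pair of arcs, or replace the region argument by the freedom-count modularity $f(\Int_G(c))+f(\Int_G(c'))=f(\Int_G(c)\cup\Int_G(c'))+f(\Int_G(c)\cap\Int_G(c'))$, using the relation $f(\Int_G(c))=2|c|-\gamma(c)$ implicit in Lemma \ref{l:Ext_G(c)} and identifying the intersection term with $\Int_G(\partial U_0)$; the difficulty there lies in interpreting the union term, which is exactly the non-planar contribution treated by the higher genus girth inequalities of Section \ref{s:generalGIand}. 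I would also have to dispose of the degenerate possibility that $c$ and $c'$ overlap along shared edges rather than cross transversally, where the subwalks must be re-chosen before the substitutions apply.
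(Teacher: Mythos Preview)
Your approach is essentially the paper's, translated from $f(\Ext_G(\cdot))$ to your girth gap $\gamma(\cdot)$ via Lemma~\ref{l:Ext_G(c)}: after the same critical-patch reduction the paper applies the subgraph modularity $f(A)+f(B)=f(A\cup B)+f(A\cap B)$ with $A=\Ext_G(c)$, $B=\Ext_G(c')$, noting that $A\cup B=\Ext_G(\partial U_0)$ and $A\cap B=\Ext_G(c'')$ for $c''=\partial(U\cup U')$. Under Lemma~\ref{l:Ext_G(c)} this is exactly your identity $\gamma(\partial U_0)+\gamma(d)=\gamma(c)+\gamma(c')$, and both proofs finish by observing that the two outer terms are $\ge 0$ while the right side is $0$.

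The obstacle you identify does not exist. Once you have reduced to $U\cap U'=U_0$, the intersection is connected, and the union of two open discs with connected nonempty intersection is simply connected (van Kampen, or Mayer--Vietoris on $H_1$), hence itself an open disc in $\M$. The number of alternating $c$- and $c'$-arcs on $\partial U_0$ is irrelevant to this; your annulus scenario simply cannot occur after the reduction. So $d=\partial(U\cup U')$ is a planar type closed walk, $\gamma(d)\ge 0$ applies, and your ``disc case'' paragraph already completes the proof. Moreover your edge identity $|\partial U_0|+|d|=|c|+|c'|$ holds for any number of arcs: an edge of $c$ not shared with $c'$ lies on exactly one of $\partial U_0$ and $d$, according as its $U$-side does or does not lie in $U'$. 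The only deviation is the shared-edge case you flag, where an edge with $U$ and $U'$ on opposite sides contributes to neither $\partial U_0$ nor $d$; but then $|\partial U_0|+|d|\le|c|+|c'|$, which only strengthens $\gamma(\partial U_0)\le 0$. The paper's $f(\Ext_G)$ phrasing has the mild advantage that subgraph modularity is automatic, so neither the edge bookkeeping nor this degeneracy needs separate attention.
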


\begin{proof} By subwalk substitutions we may assume that $c, c'$ enclose no critical patches, so that $U_0$ is the intersection of $U$ and $U'$ and the boundary walk $b$ of   $U$ is a concatenation of subwalks of $c$ and $-c'$.
We have  $\Ext_G(b)=\Ext_G(c)\cup \Ext_G(c')$ and, by Lemma   
 \ref{l:Ext_G(c)}, $f(\Ext_G(c))=f(\Ext_G(c'))=\alpha$. Thus  
\[
f(\Ext_G(b))= \alpha+\alpha- f(\Ext_G(c)\cap \Ext_G(c')).
\] 
Since $G$ satisfies the girth inequalities we have $f(\Ext_G(b))\geq \alpha$. Also, $\Ext_G(c)\cap \Ext_G(c')= \Ext(c'')$ where $c''$ is a boundary walk formed from subwalks of $c, c'$.
Thus we also have $f(\Ext_G(c)\cap Ext_G(c'))\geq \alpha$, and so the equation for $f(\Ext_G(b))$ implies that $f(\Ext_G(b))=\alpha$, as desired.
\end{proof}

\begin{lemma}\label{l:maxCritsPatch_inU}
Let $G$ be a contraction-minimal embedded graph  in $\G_{\rm pl}(\M, \alpha)$ and let 
$c$ be a critical walk of length $d$ with open disc face $U$.
Let $H\subset U^-$ be a triangulated polygon with a vertex $v$ whose distance to the boundary of $H$ is at least 4. Then there is an interior edge of $H$ that does not lie on a critical walk in $G$ that is equivalent to $c$.
\end{lemma}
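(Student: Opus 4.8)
The plan is to follow the proof of Lemma \ref{l:maxCritsPatch} almost verbatim, the essential new ingredient being that all the critical walks in play share the common length $d$. First I would record the elementary fact that any critical walk $c'$ equivalent to $c$ has length exactly $d$. Indeed, by definition of equivalence the open disc face $U'$ of $c'$ contains the same nontriangular faces as $U$, so $I(U')=I(U)$, and the critical equality $|c'|-3=\sum_{k\in I(U')}(|c_k|-3)$ together with the same equality for $c$ forces $|c'|=|c|=d$. This is precisely what allows the arithmetic of the blended-path girth inequalities to be transported unchanged from the previous lemma.

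I would then argue by contradiction, supposing that every interior edge of $H$ lies on a critical walk equivalent to $c$, and focus on the prescribed interior vertex $v$. As in Lemma \ref{l:maxCritsPatch}, pick an edge $va$ and a critical walk through it, now required to be equivalent to $c$, containing the subpath $a,v,b$, noting that $a$ and $b$ cannot be adjacent (otherwise the detour $a,v,b$ could be replaced by the edge $ab$, shortening a closed walk enclosing the same nontriangular faces and violating the girth inequality). For two further neighbours $v_1,v_2$ of $v$ the edges $vv_1,vv_2$ likewise lie on critical walks equivalent to $c$; using subwalk substitutions against the reference walk I would arrange these to contain $bv$ and to carry subwalks $\pi_b$ from $v_1$ to $b$ and $\pi_a$ from $v_2$ to $a$, each of length $d-2$. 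These subwalks cross at a vertex $u$, and writing $\gamma,\delta$ for the distances from $v_1$ and from $a$ to $u$, the two blended paths give the inequalities $\gamma+(d-2-\delta)+1\ge d$ and $\delta+(d-2-\gamma)+2\ge d$, whose sum is the contradiction $2d-1\ge 2d$. Hence the configuration is impossible, $v$ has degree $4$, and propagating this to every neighbour of $v$ contradicts $H$ being a triangulated polygon, exactly as before.

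The main obstacle is to justify the subwalk substitutions in the present restricted setting, namely that replacing subwalks of $c_{vv_1}$ or $c_{vv_2}$ by subwalks of the reference walk keeps the walks critical and, crucially, still equivalent to $c$. Here the hypothesis $H\subset U^-$ is doing the work: the subwalks being exchanged lie inside the triangulated polygon $H$, which contains no nontriangular faces, so no substitution carried out within $H$ can alter the set of enclosed nontriangular faces, and equivalence to $c$ is preserved. The facts that in a critical patch every path between two boundary vertices has length at least $d$ (from the discussion preceding Lemma \ref{l:lensLemma}) and that a strictly shorter closed walk enclosing the same nontriangular faces would violate the girth inequality are what license each individual substitution; these are exactly the tools already deployed in Lemma \ref{l:maxCritsPatch}, and they apply here because every walk under consideration has the common length $d$.
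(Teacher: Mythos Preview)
Your proposal is correct and follows essentially the same approach as the paper. The paper's own proof is only two sentences, stating that it suffices to find an interior edge not lying on a critical walk of length $d$ contained in $U^-$, and that this follows the proof of Lemma~\ref{l:maxCritsPatch}; you have supplied exactly this argument, making explicit the key observation that any critical walk equivalent to $c$ has length $d$ (so the common-length arithmetic from Lemma~\ref{l:maxCritsPatch} transfers) and adding a useful remark on why the subwalk substitutions, being confined to the triangulated region $H\subset U^-$, preserve equivalence to $c$.
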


\begin{proof}
In view of Lemma \ref{l:maxCritsPatch} it suffices to show that
there is an interior edge of $H$ that does not lie on a critical walk contained in $U^-$ of length $d$. The proof of this follows  the proof of Lemma \ref{l:maxCritsPatch}. 
\end{proof}

\section{Girth conditions and contraction-minimal graphs}\label{s:contractionminimalandgirth}

In this section we obtain the following finiteness theorem.  
Lemmas \ref{l:degreeboundGirth}, \ref{l:boundforU_silverBullet}, 
\ref{l:indptbound_critwalks} 
also features in the genus zero superface arguments required for the initial step of the induction proof of Theorem \ref{t:mainGgamma}. We assume that $\alpha$ is a positive integer. 

\begin{thm}\label{t:finiteforgirth} Let $\M$ be a compact connected 2-manifold. Then there are finitely many contraction-minimal embedded graphs in $\G_{\rm pl}(\M,\alpha)$.
\end{thm}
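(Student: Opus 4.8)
The plan is to run the Barnette--Edelson induction on genus (the Proof scheme of Section~\ref{s:theBEthm}), with critical planar type walks playing, alongside essential 3-cycles, the role that essential 3-cycles alone play for triangulations. Throughout I will use the standing bounds supplied by Lemma~\ref{l:alphacyclebounds}: for every $G\in\G_{\rm pl}(\M,\alpha)$ the number of nontriangular faces, and their total excess $\sum(|c_k|-3)$, are bounded by a constant depending only on $\alpha$ and $g(\M)$. In particular every critical planar type walk has length at most some $M=M(\alpha,g)$, and, via the equivalence in Lemma~\ref{l:Ext_G(c)}, the number of equivalence classes of critical walks is bounded. I will also use the dichotomy noted before Lemma~\ref{l:lensLemma}: in a contraction-minimal $G$ every edge lying on two facial 3-cycles lies either on an essential 3-cycle or on a critical planar type walk. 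The analysis then splits along this dichotomy, essential-3-cycle pinning being controlled by the genus through Theorem~\ref{t:loopsgenusbound}, and critical-walk pinning by the patch estimates of Lemmas~\ref{l:lensLemma}--\ref{l:maxCritsPatch_inU}.

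For the base case $g(\M)=0$ there are no essential 3-cycles, so in a contraction-minimal $G$ every interior edge lies on a critical walk whose disc contains a nontriangular face. The critical patches squeezed between adjacent critical walks have independently bounded size (Lemma~\ref{l:lensLemma}), the symmetric differences of equivalent critical walks are bounded (Lemma~\ref{l:boundforPinchedBoundary}), and by Lemma~\ref{l:maxCritsPatch_inU} no triangulated subpolygon enclosed by a critical walk can contain a vertex at distance at least $4$ from its boundary without producing an interior edge off every maximal critical walk. Together with the degree bound (Lemma~\ref{l:degreeboundGirth}) and the bound on the region enclosed by the outermost critical walks around each nontriangular face (Lemmas~\ref{l:boundforU_silverBullet} and \ref{l:indptbound_critwalks}), this confines the whole of $G$ to a region whose size is bounded in terms of $\alpha$ and $g(\M)$; hence there are only finitely many such $G$ on the sphere. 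This is the genus-zero superface argument.

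For the induction step assume the statement for all surfaces of smaller genus and let $G$ be contraction-minimal on $\M$ with $g(\M)\ge 1$. If $G$ lies entirely within bounded distance of its nontriangular faces then it is bounded and there is nothing to prove; otherwise some edge is far from every nontriangular face and, by the dichotomy, is pinned by an essential 3-cycle, which furnishes an essential 3-cycle $t$ lying in a triangulated part of $G$ away from the nontriangular-face region. Cutting along $t$ and capping as in the Proof scheme yields a graph $G_1$, or a pair $G_1,G_2$, on a surface of strictly smaller genus; since the surgery is performed in a fully triangulated region it creates no planar-girth violation and preserves the count $f=\alpha$, so $G_1\in\G_{\rm pl}(\M_1,\alpha)$. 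Exactly as in the orientable case of Theorem~\ref{t:main}, the only edges of $G_1$ that can fail to be pinned in $G_1$ are those whose pinning in $G$ was destroyed by the cut, and these lie near the bounded-degree vertices of $t$, the degree bound being Lemma~\ref{l:degreeboundGirth}; their number is therefore bounded by a function of $g$ and $\alpha$. The $\G_{\rm pl}$-analogue of Lemma~\ref{l:simpleLemma} then expresses $G_1$ as the result of a bounded number of planar vertex-splitting moves applied to a contraction-minimal graph in $\G_{\rm pl}(\M_1,\alpha)$; by the induction hypothesis there are finitely many such graphs and finitely many bounded splitting sequences, hence finitely many $G_1$, and therefore finitely many $G$.

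The main obstacle is the $\G_{\rm pl}$-analogue of Lemma~\ref{l:simpleLemma}, together with the verification that cutting, capping, and splitting all keep one inside the family $\G_{\rm pl}(\M_1,\alpha)$. Unlike the triangulation case, contraction here is blocked by two distinct mechanisms, and a vertex-split can interfere with the girth inequalities, so one must check both that shrinking an unpinned edge leaves $G_1$ in $\G_{\rm pl}$ and that it strictly decreases the number of unpinned edges, using the subwalk-substitution manipulations of Lemma~\ref{l:UcapUdash} to retain control of the critical walks that survive the cut. A second delicate point, already isolated in the base case, is ruling out arbitrarily deep triangulated regions pinned only by long thin critical walks; this is precisely what Lemmas~\ref{l:lensLemma}, \ref{l:boundforPinchedBoundary}, \ref{l:maxCritsPatch} and \ref{l:maxCritsPatch_inU} are designed to prevent, and marshalling them into the single region bound of Lemmas~\ref{l:boundforU_silverBullet} and \ref{l:indptbound_critwalks} is where the real work lies.
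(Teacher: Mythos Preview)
Your approach is essentially the paper's: Lemmas~\ref{l:degreeboundGirth}--\ref{l:indptbound_critwalks} handle the critical-walk side, and then a Barnette--Edelson cut-and-cap induction (your induction step, the paper's Proposition~\ref{p:bound_on3cyclesG_pl}) handles the essential 3-cycles. The organisation differs only cosmetically: the paper isolates the essential-3-cycle count as a separate proposition and then assembles Theorem~\ref{t:finiteforgirth} from it and Lemma~\ref{l:indptbound_critwalks}, whereas you fold everything into one induction.

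There is, however, a concrete slip in your induction step. Cutting along an essential 3-cycle and capping does \emph{not} preserve $f=\alpha$: duplicating three vertices and three edges adds $3\cdot 3-3=6$ to the freedom count, so in the connected case $G_1\in\G_{\rm pl}(\M_1,\alpha+6)$, and in the disconnected case $f(G_1)+f(G_2)=\alpha+6$ with neither summand fixed in advance. Your induction hypothesis ``finitely many contraction-minimal graphs in $\G_{\rm pl}(\M_1,\alpha)$'' is therefore not what you get to apply; you must state the hypothesis for all $\alpha'$ in a bounded range (or, more simply, for all positive $\alpha'$, since the bounds in Lemmas~\ref{l:alphacyclebounds}--\ref{l:indptbound_critwalks} depend on $\alpha'$ anyway). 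The paper makes exactly this adjustment in the proof of Proposition~\ref{p:bound_on3cyclesG_pl}, writing $G_1\in\G_{\rm pl}(\M_1,\alpha+6)$. Once this is corrected your sketch matches the paper's proof, including the appeal to a $\G_{\rm pl}$-analogue of Lemma~\ref{l:simpleLemma}, which the paper also invokes (``as in the proof of Lemma~\ref{l:simpleLemma} for almost minimal triangulations'') without spelling out in full.
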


For $G$  in  $\G_{\rm pl}(\M,\alpha)$ let  $\M(G)$ be the connected compact topological space given by deleting the nontriangular faces of $G$ from $\M$. In particular, $\M(G)$ need not be a surface with boundary in the classical sense that the boundary consists of finitely many disjoint simple loops. 

\begin{lemma}\label{l:htpy_map}
Let $\L$ be a set of disjoint loops in $\M(G)$, based at $v$,  with homotopy classes $[\gamma]_{\M(G)}$. Then the natural map 
$\theta_\L:[\gamma]_{\M(G)}\to [\gamma]_\M$ from the $\M(G)$-homotopy classes of $\L$ to the $\M$-homotopy classes of $\L$, induced by the inclusion map,  is finite-to-one.
\end{lemma}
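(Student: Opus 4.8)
The plan is to reduce the statement to a separation count that exploits the finiteness of the deleted faces. Fix an $\M$-homotopy class $\kappa$ and let $\L_\kappa\subseteq\L$ be the set of loops $\gamma$ with $[\gamma]_\M=\kappa$; it suffices to show that only finitely many $\M(G)$-homotopy classes occur among the loops of $\L_\kappa$. First I would record that the number $r$ of nontriangular faces $W_1,\dots,W_r$ of $G$ is finite, and in fact bounded in terms of $\alpha$ and $g(\M)$, by Lemma \ref{l:alphacyclebounds}. Writing $\M(G)=\M\setminus(W_1\cup\dots\cup W_r)$, the goal I would aim for is the quantitative bound that $\L_\kappa$ meets at most $r+1$ distinct $\M(G)$-classes, which immediately gives the finite-to-one property.

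The geometric heart is the observation that two loops $\gamma,\gamma'\in\L_\kappa$ are \emph{parallel}: being simple, meeting only at $v$, and freely homotopic in $\M$ (as $[\gamma]_\M=[\gamma']_\M=\kappa$), they cobound an embedded region $A(\gamma,\gamma')$ in $\M$, namely a disc pinched at $v$ (a degenerate annulus). I would make this precise using the standard fact that a collection of pairwise disjoint, pairwise freely homotopic simple closed curves in a surface is parallel and linearly ordered, cobounding nested annuli with pairwise disjoint interiors. In the generic case the mechanism is the universal cover $\widetilde{\M}$ (a plane when $\M$ is neither the sphere nor the projective plane): lifting every loop of $\L_\kappa$ from a fixed lift $\widetilde v$, homotopy forces all lifts to share the common endpoint $\widetilde w$, while simplicity and disjointness downstairs force the lifted arcs to be pairwise disjoint except at $\widetilde v,\widetilde w$. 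A pencil of arcs between two points of the plane is linearly ordered and consecutive arcs bound disc regions, yielding an order $\gamma_1,\dots,\gamma_m$ on the distinct loops and incremental cobounding regions $A_i=A(\gamma_i,\gamma_{i+1})$ with pairwise disjoint interiors in $\M$.

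The decisive step is a separation dichotomy for consecutive loops. If the region $A_i$ contains no nontriangular face $W_k$, then $A_i\subseteq\M(G)$, so the homotopy sweeping $\gamma_i$ across $A_i$ to $\gamma_{i+1}$ takes place entirely in $\M(G)$, whence $[\gamma_i]_{\M(G)}=[\gamma_{i+1}]_{\M(G)}$. Consequently, whenever consecutive loops lie in distinct $\M(G)$-classes, the region $A_i$ between them must contain at least one of $W_1,\dots,W_r$. Because the interiors of the $A_i$ are pairwise disjoint, each face $W_k$ lies in at most one $A_i$; hence there are at most $r$ indices at which the $\M(G)$-class changes, so at most $r+1$ distinct $\M(G)$-classes occur in $\L_\kappa$. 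Since $\kappa$ is arbitrary and $r$ is finite, $\theta_\L$ is finite-to-one.

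I expect the main obstacle to be the rigorous justification of parallelism and nesting in the presence of the shared basepoint $v$ and of possible non-orientability, together with the fact, emphasised before the statement, that $\M(G)$ need not be a surface with boundary in the classical sense: its boundary walks may traverse a vertex more than once, so the deleted faces can be pinched and their closures can meet. Care is needed to ensure that ``$A_i$ contains no nontriangular face'' really places all of $A_i$, including points on pinched boundary walks, inside $\M(G)$, and that the incremental regions are disjoint downstairs in $\M$ rather than merely upstairs in the cover. I would handle this by working with the closed faces and observing that any pinch point of a boundary walk is a vertex of $G$ and therefore already lies in $\M(G)$, so the only genuine obstructions to a homotopy remaining in $\M(G)$ are the open faces $W_k$ themselves.
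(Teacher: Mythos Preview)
Your proposal is correct and follows essentially the same idea as the paper's proof: two loops of $\L$ that are homotopic in $\M$ cobound a pinched annulus, and if they are not homotopic in $\M(G)$ then one of the deleted open discs $W_k$ lies in that annulus; disjointness of the regions then bounds the number of $\M(G)$-classes by $r+1$. The only difference is presentational: the paper argues by induction on $r$ (treating $r=1$ explicitly and asserting the general case follows similarly), whereas you unfold this into an explicit linear ordering of the loops in $\L_\kappa$ via the universal cover and count class-changes directly, which makes the mechanism and the bound $r+1$ more transparent.
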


\begin{proof} Let $r$ be the number of open discs in $\M$ whose deletion gives $\M(G)$. Suppose $r=1$ with open disc $U$ and that $\gamma_1, \gamma_2$ are loops of $\L$ that are homotopic in $\M$. Then they form the boundary of a pinched annulus in $\M$. If the loops are not homotopic in $\M(G)$ then $U$ is contained in this annulus. If $\gamma_3$ is homotopic to $\gamma_1$ and $\gamma_2$ in $\M$ it follows that it is homotopic in $\M(G)$ to either $\gamma_1$ or $\gamma_2$.
The general case follows similarly by induction. 
\end{proof}

The next lemma is a counterpart to the degree boundedness given in Corollary \ref{c:degreegenusbound} and its proof also makes use of the genus bound of nonhomotic loops obtained by Barnette and Edelson in Theorem \ref{t:loopsgenusbound}. 
In the proof and in this section we refer to a critical planar type boundary walk simply as a critical walk.

\begin{lemma}\label{l:degreeboundGirth} Let $\M$ be a compact 2-manifold and let $G$ be a contraction-minimal embedded graph in the family $\G_{\rm pl}(\M,\alpha)$.
Then there is an independent bound
for the degree of a vertex in $G$. 
\end{lemma}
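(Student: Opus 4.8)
The plan is to follow the strategy of Corollary \ref{c:degreegenusbound}: from a vertex $v$ of large degree I would manufacture a large family of pairwise non-homotopic loops based at $v$ and then invoke a genus bound. The new feature is that the relevant loops are no longer null-homotopic only when they are essential $3$-cycles; a critical planar type walk bounds a disc in $\M$ and so is trivial in $\M$, but it is \emph{nontrivial} in $\M(G)$ once that disc contains a nontriangular face. Hence I would work throughout with $\M(G)$-homotopy classes and pass to $\M$-homotopy classes at the end via the finite-to-one map $\theta_\L$ of Lemma \ref{l:htpy_map} together with Theorem \ref{t:loopsgenusbound}.

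First I would isolate the relevant edges. Suppose $v$ has degree $m$. By Lemma \ref{l:alphacyclebounds} both the number and the boundary lengths of the nontriangular faces of $G$ are bounded in terms of $\alpha$ and $g(\M)$, so only an independently bounded number $B_0$ of edges at $v$ can bound a nontriangular face. Each of the remaining $\geq m-B_0$ edges $vx$ lies between two facial $3$-cycles, so by contraction-minimality and Lemma \ref{l:Ext_G(c)} it lies on an essential $3$-cycle or on a critical walk, and this cycle or walk passes through $v$. Shrinking each of these cycles/walks to a based loop at $v$, made otherwise disjoint by contracting a small neighbourhood of $v$ exactly as in Corollary \ref{c:degreegenusbound}, produces a family $\L$ of loops, each homotopically nontrivial in $\M(G)$. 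It then suffices to prove that the loops of $\L$ represent at least $c^{-1}(m-B_0)$ distinct $\M(G)$-homotopy classes for an independent constant $c$: by Lemma \ref{l:htpy_map} the map $\theta_\L$ to $\M$-classes is finite-to-one with an independent bound, and by Theorem \ref{t:loopsgenusbound} the number of $\M$-classes so reached is independently bounded, whence $m$ is bounded.

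The core of the argument is the bound on the multiplicity of each $\M(G)$-class, i.e. on the number of cycles/walks through $v$ represented by a single class. For critical walks I would note that, since the number of nontriangular faces is bounded, there are independently boundedly many equivalence classes of critical walks in the sense preceding Lemma \ref{l:boundforPinchedBoundary}; two critical walks through $v$ that are $\M(G)$-homotopic are equivalent, and then Lemma \ref{l:boundforPinchedBoundary} bounds the edges in the symmetric difference of their disc faces, so only boundedly many mutually $\M(G)$-homotopic critical walks can pass through $v$. For essential $3$-cycles the bound is an analogue of Lemma \ref{l:4impliesShrinkable}: too many pairwise homotopic essential $3$-cycles through $v$ would be nested in parallel and would enclose, between two of them, a triangulated region so large that it contains a vertex at distance at least $4$ from its boundary; Lemmas \ref{l:maxCritsPatch} and \ref{l:maxCritsPatch_inU} would then produce an interior edge lying on no critical walk of maximal length, which together with the critical-patch size bound of Lemma \ref{l:lensLemma} contradicts contraction-minimality.

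The step I expect to be the main obstacle is precisely this multiplicity bound for parallel critical structures through $v$: one must control the triangulated regions lying between $\M(G)$-homotopic critical walks or essential $3$-cycles and show that an excessive number of them forces a contractible interior edge, and this is where the patch lemmas \ref{l:lensLemma}, \ref{l:maxCritsPatch}, \ref{l:maxCritsPatch_inU} and \ref{l:boundforPinchedBoundary} must be combined with care, in particular to reconcile the distinction between arbitrary critical walks and critical walks of maximal length. Once the multiplicity bound is secured, the loops of $\L$ fall into at least $c^{-1}(m-B_0)$ distinct $\M(G)$-classes, and the passage to a bound on $m$ is immediate from Lemma \ref{l:htpy_map} and Theorem \ref{t:loopsgenusbound}, exactly as in Corollary \ref{c:degreegenusbound}.
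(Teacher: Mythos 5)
Your high-level plan matches the paper's proof in outline (bound the boundary edges via Lemma \ref{l:alphacyclebounds}, note that every other edge at $v$ lies on an essential 3-cycle or a critical walk, bound the multiplicity of each homotopy class, then pass from $\M(G)$-classes to $\M$-classes via Lemma \ref{l:htpy_map} and finish with Theorem \ref{t:loopsgenusbound}). However, there is a genuine gap at the step where you convert the cycles and walks through $v$ into a family of loops to which Theorem \ref{t:loopsgenusbound} can be applied. That theorem requires simple curves \emph{meeting only at a common point}. In Corollary \ref{c:degreegenusbound} the trick of shrinking $N(v)$ works because each nonplanar 3-cycle through $v$ contributes a single peripheral edge; distinct peripheral edges of a simple graph can meet only at endpoints adjacent to $v$, and all such endpoints are absorbed into $v$ by the contraction. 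For critical walks of length $d>3$ this completely breaks down: two critical walks through $v$ may share edges and vertices arbitrarily far from $v$, and no contraction of a neighbourhood of $v$ (nor of $N(v)$) removes those intersections. So your family $\L$ need not consist of loops meeting only at $v$, and Theorem \ref{t:loopsgenusbound} cannot be invoked, even after the multiplicity reduction.

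The paper closes exactly this gap with the Fisk--Mohar path lemma (Lemma \ref{l:fiskMohar}), which your proposal never uses: among sufficiently many walks of a fixed length $d$ through $v$, one can extract $r$ of them with \emph{internally disjoint} subpaths from $v$ to a common vertex $z$. It is to these internally disjoint subpaths that the rest of the argument is applied: a pair of them that is homotopic in $\M(G)$ (rel endpoints) bounds a critical patch, so Lemma \ref{l:lensLemma} bounds each class's multiplicity; and pairwise non-homotopic ones, after contracting $z$ to $v$ along one of them, become simple loops meeting only at a common point, so that Theorem \ref{t:loopsgenusbound} applies after the finite-to-one reduction of Lemma \ref{l:htpy_map}. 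Note that internal disjointness is also what makes the multiplicity bound clean; your substitute arguments (Lemma \ref{l:boundforPinchedBoundary} for equivalent critical walks, and a nesting argument with Lemmas \ref{l:maxCritsPatch} and \ref{l:maxCritsPatch_inU} for homotopic essential 3-cycles) are applied to the raw walks, which may overlap in complicated ways, and bounding the edges in symmetric differences of their disc faces does not by itself bound the \emph{number} of such walks through $v$. To repair the proposal, insert the Lemma \ref{l:fiskMohar} extraction step before any homotopy counting.
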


\begin{proof}
Let $v$ be a vertex. Then there is an independent bound for the number of incident edges to $v$ that lie on the boundary of $G$. This is so since the number and lengths of the boundary walks of the nontriangular faces have independent bounds by Lemma \ref{l:alphacyclebounds}.
 The remaining edges $e$ incident to $v$ lie on an essential 3-cycle $c_e$ or on a critical boundary walk $c_e$. It suffices then to show that there is an independent bound for the number of these edges $e$ for which the cycles or walks $c_e$ are of a particular length $d\geq 3$. 

Let $e_i, 1\leq i\leq s,$ be a choice of these incident edges that have a critical cycle or walk $c_{e_i}$ of length $d$. Discarding at most $\lceil s/2\rceil$ walks or cycles we can arrange that each edge $f_j$ of the remaining edges, relabelled as $f_1, \dots , f_t$, belongs only to $c_{f_j}$. Consider each $c_{f_j}$ as a directed closed path with initial edge
$f_j$. By Lemma \ref{l:fiskMohar}, with $a=b=v$, given a positive integer $r$ if $t\geq r^{(d-1)}(d-1)!$ then there is subset of $r$ closed paths $c_{f_j}$, say $\pi_1, \dots , \pi_r,$, with subwalks between $a$ and a common vertex $z$ that are internally disjoint subwalks. 

Write $[\gamma]_{\M(G),a,z}$ for the homotopy class of the curves $\gamma$ in $\M(G)$ with initial and final points at $a$ and  $z$ respectively. If $\pi_i$ and $\pi_j$ are homotopic in $\M(G)$ then they form the boundary curves of a critical patch, namely $P(\pi_i,\pi_j)$. By Lemma \ref{l:lensLemma}  there is a common  independent bound for the size of the  equivalence classes 
\[
P_j=\{\pi_i: \pi_i\in [\pi_j]_{\M(G),a,z}\},\quad  j=1,\dots , r.
\]

To prove the lemma it suffices to show that there is an independent bound for the size of a subset of the paths $\pi_i$ with distinct homotopy classes $[\pi_i]_{\M(G),a,z}$. For in this case there must be an independent bound for $r$ and hence $t$ and $s$. 

By  Lemma \ref{l:htpy_map},  we need only show that there is an independent bound for the number of distinct homotopy classes $[\pi_i]_{\M,a,z}$.
To this end suppose that $\gamma_1, \dots ,\gamma_q$ are such curves in $\M$ with endpoints $a,z$ that are pairwise nonhomotopic.  Contracting $z$ to $a$ along the curve $\gamma_1$
gives $q-1$ closed curves in $\M$ that are disjoint except for a common vertex. Moreover they are pairwise nonhomotopic and so, by Theorem \ref{t:loopsgenusbound}, have an independent bound, completing the proof.
\end{proof}

\begin{lemma}\label{l:boundforU_silverBullet} Let
$G=(V,E)$ be a contraction-minimal embedded graph  in $\G_{\rm pl}(\M, \alpha)$ and let $c$ be critical planar type walk with face $U$.  Then there is an independent bound for the set $V\cap U$.
\end{lemma}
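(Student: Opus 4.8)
The plan is to induct on the number $m$ of nontriangular faces of $G$ contained in $U$. When $m=0$, criticality forces $|c|-3=0$, so $c$ is a (necessarily facial) planar $3$-cycle and $V\cap U=\emptyset$; this is the base case. For the inductive step I would first record the data that Lemma \ref{l:alphacyclebounds} makes independent: the number and total boundary length of the nontriangular faces inside $U$, the resulting independent bound $M$ for the length of \emph{any} critical walk, and the independent degree bound $\Delta$ from Lemma \ref{l:degreeboundGirth}. Let $B$ be the set of vertices lying on $c$ or on the boundary of a nontriangular face inside $U$; then $|B|$ is independently bounded. Write $\beta_{m-1}$ for the independent bound supplied by the inductive hypothesis for critical walks enclosing at most $m-1$ nontriangular faces.

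The key reduction is that it suffices to show every vertex of $V\cap U$ lies within a fixed independent distance $L$ of $B$: once this holds, the degree bound gives $|V\cap U|\le |B|(\Delta+1)^{L}$, an independent bound. So I would argue by contradiction, choosing $L$ large relative to $M$ and $\beta_{m-1}$, and supposing some $v\in V\cap U$ has graph distance greater than $L$ from $B$. Then the ball of radius $4$ about $v$ is a triangulated polygon $H\subset U^-$ with $v$ at distance $4$ from $\partial H$, so Lemma \ref{l:maxCritsPatch_inU} produces an interior edge $e$ of $H$, at distance greater than $L-3$ from $B$, lying on no critical walk equivalent to $c$. Invoking contraction-minimality, $e$ lies either on an essential $3$-cycle or on a critical walk. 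The essential $3$-cycle is excluded because any $3$-cycle through $e$ has all its vertices interior to $U$ and its edges cannot cross $c$, so it is null-homotopic. Hence $e$ lies on a critical walk $c'$ that, by the choice forced by Lemma \ref{l:maxCritsPatch_inU}, is not equivalent to $c$. Since $|c'|\le M<L$ and $c'$ passes through the deep edge $e$, all of $c'$ sits inside $U$ and bounds a subdisc $U'\subset U$ whose enclosed nontriangular faces form a proper, nonempty subset $S'$ of the $m$ faces in $U$ (nonempty, since otherwise criticality would make $c'$ a nonfacial planar $3$-cycle, which does not exist).

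The main obstacle, and the crux of the argument, is converting this into a contradiction with the inductive hypothesis; everything else is bookkeeping. Here I would use that $U'$ contains a nontriangular face $W\in S'$ whose boundary lies in $B$ and is therefore at distance greater than $L$ from $e$, hence at distance greater than $L-M$ from every vertex of $c'=\partial U'$. Any geodesic inside $\overline{U'}$ from $\partial U'$ to $\partial W$ then has more than $L-M-1$ internal vertices, all lying in $V\cap U'$, so $|V\cap U'|>L-M-1$. But $c'$ is a critical walk enclosing only $|S'|\le m-1$ nontriangular faces, so the inductive hypothesis gives $|V\cap U'|\le\beta_{m-1}$. Choosing $L>M+1+\beta_{m-1}$ (and $L>M+4$, so that the ball and the null-homotopy step are valid) yields the contradiction, and the induction closes.

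The delicate points I would verify with care are exactly those in the third and fourth paragraphs: that a short critical walk threading a deep edge genuinely bounds a subdisc of $U$ enclosing a \emph{proper} subset of the nontriangular faces (using that $c'$, being short and null-homotopic within the disc $U$, cannot reach $\partial U$ or any face of $B$), and that the internal vertices of the geodesic from $\partial U'$ to $\partial W$ indeed all lie in the open disc $U'$ so that they are counted by $|V\cap U'|$. Both rest on the separation properties of planar type walks together with the independent length bound $M$, and it is the interplay between the local replacement provided by Lemma \ref{l:maxCritsPatch_inU} and the ``far hole forces many interior vertices'' counting that makes the inductive descent on $m$ effective.
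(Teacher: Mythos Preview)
Your argument is correct and runs on the same induction (on the number of nontriangular faces enclosed) and the same toolkit (the degree bound of Lemma~\ref{l:degreeboundGirth}, the length bound from Lemma~\ref{l:alphacyclebounds}, and the deep-vertex Lemma~\ref{l:maxCritsPatch_inU}) as the paper. The organisation of the inductive step differs. The paper partitions the edges inside $U$ according to the nature of the obstructing walk $c_e$: those with $c_e$ meeting $c$ are bounded via the degree bound; those with $c_e\subset U$ enclosing a proper subset of the $n$ faces are declared bounded ``by the induction hypothesis''; and the residual set $S_n$, where $c_e$ necessarily encloses all $n$ faces, is bounded by invoking the proof of Lemma~\ref{l:maxCritsPatch} directly. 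Your version instead argues by distance: a vertex far from $B$ yields, via Lemma~\ref{l:maxCritsPatch_inU}, a deep edge on a critical walk $c'$ enclosing a proper subset, and then a geodesic count from $\partial U'$ to an enclosed hole contradicts the inductive bound $\beta_{m-1}$. Your mechanism for cashing in the inductive hypothesis is more explicit than the paper's (which is quite terse on exactly why $S\setminus S_n$ is bounded); conversely the paper's partition isolates the role of $S_n$ more cleanly. One technical point to tighten: the ball of radius $4$ about $v$ need not literally be a triangulated polygon, so you should take $H$ to be a triangulated disc in $U^-$ with $d(v,\partial H)\ge 4$, which exists because $v$ is far from both $c$ and every nontriangular face boundary.
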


\begin{proof}
Let $U$ contain the nontriangular faces $W_1, \dots ,W_n$.
If $e$ is an edge of $G$ that lies on a pair of facial 3-cycles then by the hypotheses $e$ lies on a critical walk $c_e$ or an essential 3-cycle $c_e$. An essential 3-cycle $c_e$ necessarily contains a vertex of $c$ and so by the degee boundedness of Lemma \ref{l:degreeboundGirth} the number of such edges $e$ has an independent bound. Similarly the set of edges $e$ for which $c_e$ is a critical walk meeting $c$ has an independent bound. 

To complete the proof we consider the set $S$ of edges $e$ for which $c_e$ is a critical walk that does not meet $c$ and show that it has an independent bound.

Suppose first that $U$ contains a single nontriangular face
$W_i$ with a boundary walk of length $d$, and let $v$ be a vertex that has distance at least 4 from the complement of $S$ in $U$. The proof of Lemma \ref{l:maxCritsPatch} applies and so $S$ has an independent bound.

By way of induction assume that the lemma is true if there are fewer than $n$ nontriangular faces and that $U$ contains $n$ nontriangular faces. Once again let $S$ be the set of vertices in $U$ that do not lie on a critical walk meeting the boundary of $U$ or on an essential 3-cycle. By the induction hypothesis it remains to show that $S_n$ has an independent bound where $S_n$ is the set of vertices of edges $e$ for which $c_e$ is necessarily a critical walk containing all $n$ nontriangular faces. Once again, the proof of Lemma \ref{l:maxCritsPatch} applies, completing the proof.
\end{proof}

The next lemma now follows from the previous lemma by an exhaustive colouring argument.

\begin{lemma}\label{l:indptbound_critwalks} Let $G$ be a contraction-minimal graph in $\G_{\rm pl}(\M,\alpha)$. Then there is an independent bound for the number of critical planar type walks in $G$.
\end{lemma}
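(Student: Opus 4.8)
The plan is to group the critical walks by the collection of nontriangular faces they enclose, to note that there are only boundedly many groups and that each individual critical walk is short, and then to confine each group to a bounded region so that it can contain only boundedly many walks.

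First I would record two uniform bounds. By Lemma \ref{l:alphacyclebounds} the number $r$ of nontriangular faces $W_1,\dots,W_r$ depends only on $\alpha$ and $g(\M)$, each has bounded boundary length, and $\sum_k(|c_k|-3)=\alpha+3\mu g-6$. Since a critical walk $c$ with face $U$ satisfies $|c|-3=\sum_{k\in I(U)}(|c_k|-3)$ with every summand positive, every critical walk has length at most $M:=\alpha+3\mu g-3$, an independent bound. (A critical walk with $I(U)=\emptyset$ would have $|c|=3$; as there are no nonfacial planar $3$-cycles in $G$, it is a facial $3$-cycle bounding a fully triangulated disc, and such walks are not among those governed by the planar girth inequalities, which concern only discs that are not fully triangulated.) I would then colour each remaining critical walk by the subset $I(U)\subseteq\{W_1,\dots,W_r\}$ it encloses. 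There are at most $2^r$ colours, an independent bound, so it suffices to bound the number of critical walks of each fixed colour $S\neq\emptyset$.

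Fix such an $S$. All faces $U$ of colour-$S$ walks contain the common nonempty set $S$, so they pairwise intersect. Among colour-$S$ walks choose $c^*$ whose face $U^*$ contains the greatest number of faces of $G$; this number is bounded by Lemmas \ref{l:boundforU_silverBullet} and \ref{l:degreeboundGirth}. I claim every colour-$S$ walk $c$ has $U\subseteq (U^*)^-$. If not, $U\cup U^*$ strictly contains $U^*$, and its outer boundary walk $d$ encloses exactly $S$. Exactly as in the proof of Lemma \ref{l:UcapUdash}, modularity of $f$ gives $f(\Ext_G(d))=f(\Ext_G(c))+f(\Ext_G(c^*))-f(\Ext_G(c)\cup\Ext_G(c^*))$, where $\Ext_G(c)\cup\Ext_G(c^*)=\Ext_G(b)$ for the walk $b$ bounding $U\cap U^*$; hence $f(\Ext_G(d))=\alpha+\alpha-f(\Ext_G(b))\le\alpha$ by the girth inequality for $b$, while $f(\Ext_G(d))\ge\alpha$ by the girth inequality for $d$. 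Thus $f(\Ext_G(d))=\alpha$, so by Lemma \ref{l:Ext_G(c)} the walk $d$ is a critical colour-$S$ walk enclosing strictly more faces than $c^*$, contradicting maximality. This proves the claim, so every vertex on a colour-$S$ critical walk lies in $(U^*)^-$, a set of bounded size by Lemma \ref{l:boundforU_silverBullet} together with $|c^*|\le M$. Each colour-$S$ walk is then a closed walk of length at most $M$ on this bounded vertex set, so there are boundedly many; summing over the at most $2^r$ colours gives the required independent bound.

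The main obstacle is the topological step that $U\cup U^*$ is again an open disc, so that its boundary $d$ is a genuine planar type closed walk and the exterior identities apply. When $U\cap U^*$ is disconnected, or when the two discs together wrap nontrivially in $\M$, this requires the same component analysis used in Lemma \ref{l:UcapUdash}: one passes to the component of $U\cap U^*$ carrying the nontriangular faces $S$, checks that the complementary pieces are critical patches of bounded size (via Lemmas \ref{l:lensLemma} and \ref{l:boundforPinchedBoundary}), and concludes that after a bounded modification the union is a disc for which the freedom count goes through.
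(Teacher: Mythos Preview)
Your setup---grouping critical walks by the subset $I(U)$ of nontriangular faces they enclose, bounding their lengths via Lemma~\ref{l:alphacyclebounds}, and reducing to a single colour class---matches the paper exactly. The divergence is in how you confine all colour-$S$ walks to a bounded region. You attempt to show that every such $U$ is contained in a single maximal $U^*$, using a union argument and modularity of $f$. The paper instead runs an iterative ``exhaustive colouring'' that never forms $U\cup U^*$ at all.

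There is a genuine gap in your approach, and it is precisely the one you flag in your final paragraph. The step ``$U\cup U^*$ is an open disc with boundary walk $d$'' can fail. Two open discs in $\M$ whose intersection is disconnected need not have a simply connected union: on a torus, for instance, two large null-homotopic discs can together cover all but two small triangulated patches, and their union then has genus~$1$. In that situation there is no planar type boundary walk $d$ for the union, the identity $\Ext_G(c)\cap\Ext_G(c^*)=\Ext_G(d)$ has no meaning, and the modularity computation cannot be carried out. Your proposed repair via Lemma~\ref{l:UcapUdash} does not close this: that lemma \emph{assumes} the nontriangular faces all lie in a single component of $U\cap U^*$, which is unjustified when $|S|\geq 2$, and in any case it produces a critical walk for a component of the \emph{intersection}, saying nothing about the topology of the union. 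The ``bounded modification'' you invoke is not specified, and it is unclear what operation would restore the disc property while preserving criticality and the containment of $S$.

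The paper's argument sidesteps the union problem entirely. Starting from any $U_0\in\U$, one colours its triangular faces and observes that any $U\in\U$ not contained in $U_0$ must have an uncoloured triangular face incident to an edge of $\partial U_0$. There are boundedly many such faces; choose one $U_i\in\U$ through each and colour their contents. After this round, \emph{every} $U\in\U$ not yet fully coloured has strictly fewer uncoloured faces than before. Since each $U$ has boundedly many faces by Lemma~\ref{l:boundforU_silverBullet}, the process halts in boundedly many rounds, and the accumulated union of the $U_i$---which is never required to be a disc---has bounded size and contains every $U\in\U$.
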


\begin{proof} Fix a nonempty subset of the nontriangular faces 
of $G$ and let $\U$ be the set of open disc faces $U$ of critical walks that contain the nontriangular faces of the subset and no others. There are finitely many such subsets, so it will be sufficient to show that $|\U|$ has an independent bound.
Choose an initial set $U_0$ in $\U$ and view each of the triangular faces of $G$ that it contains as coloured or labelled, with the remaining triangular faces uncoloured. For any set $U$ in $\U$ exactly one of the following possibilities holds   This is because the open sets in $\U$ are open discs and contain the same nontriangular faces.   : 
\medskip

(i)  $U\subseteq U_0$.
\medskip

(ii) There is an uncoloured triangular face $T$ in $U\backslash U_0$ with an edge contained in the boundary of $U_0$. 
\medskip

If (i) holds then by the previous lemma there is an independent bound for $|\U|$. If (ii) holds then we proceed with the following exhaustive colouring argument to obtain the same conclusion, as desired.

There is an independent bound for the length of the boundary walk of $U_0$ so there is an independent bound for the number of the  incident triangular faces $T$ satisfying (ii). Let $\T_1=\{T_1, \dots , T_{r_1}\}$ be the set of these uncoloured faces and let $U_1, \dots , U_{r_1}$ be any choice of sets in $\U$ with $T_i\subset U_i$ for each $i$. 
Colour every triangular face  in $U_i\backslash U_0$, for $1\leq i \leq r_1$. Note especially the fact that \emph{every} set $W$ in $\U$ with $W\backslash U_0$ nonempty now has at least one fewer uncoloured edges. 

Since $r_1$ has an independent bound there is an independent bound for number of edges in the boundary of the closed union, $K$ say, of the sets $U_i$, for $0\leq i \leq r_1$.
Consider now the triangular faces, denoted  $T_{r_1+1}, \dots , T_{r_2}$, that are incident to the boundary edges of $K$ and are uncoloured and lie in some set in $\U$. If there are no such faces then, by the previous lemma $\U$ has an independent bound, otherwise choose sets $U_j$ that contain $T_j$, for $r_1+1, \dots , r_2$, where $r_2$ has an independent bound. Colour all the triangular faces contained in the new sets $U_j$. 
Following this colouring, every set $U$ in $\U$ is either fully coloured or has fewer uncoloured triangular faces than it had before this colouring.

It follows from Lemma \ref{l:boundforU_silverBullet} that after a finite number of repetitions of this selection and colouring process stops, with an independent bound on the number of repetitions. Thus every set of $\U$ is contained in the union of the resulting sets $U_i$ and the size of this union has an independent bound. Since there are finitely many subsets of nontriangular faces, the lemma follows.
\end{proof}

The proof of the next proposition follows the proof scheme in Section \ref{s:theBEthm} of the Barnette-Edelson Theorem. 

\begin{prop}\label{p:bound_on3cyclesG_pl}
There is an independent bound for the number of essential 3-cycles in a contraction-minimal graph in $\G_{\rm pl}(\M,\alpha)$.
\end{prop}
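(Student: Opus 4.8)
The plan is to induct on the genus of $\M$, following the cutting scheme of Section~\ref{s:theBEthm}. A useful preliminary reduction is that in $\G_{\rm pl}(\M,\alpha)$ the essential $3$-cycles are exactly the homotopically nontrivial ones: if a $3$-cycle $c$ bounds an open disc $U$, then the planar girth inequality for $U$ reads $|c|-3=0\geq\sum_{k\in I(U)}(|c_k|-3)$, and since every nontriangular face has boundary length at least $4$ this forces $U$ to contain no nontriangular face, so $c$ is planar. This also disposes of the base cases: on the sphere every $3$-cycle bounds a disc, hence there are no essential $3$-cycles, while on the projective plane all essential (necessarily $1$-sided) $3$-cycles are mutually homotopic and so are covered by the per-class bound below.

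The inductive step bounds separately the number of homotopy classes of essential $3$-cycles and the number within a fixed class. For the per-class bound I would observe that two disjoint homotopic essential $3$-cycles cobound an annulus, so a class with many members yields a long concentric nest, consecutive members cobounding annular regions. Those regions containing no nontriangular face are triangulated strips of increasing width, and by the reasoning of Lemmas~\ref{l:maxCritsPatch} and~\ref{l:maxCritsPatch_inU} together with Lemma~\ref{l:4impliesShrinkable} a sufficiently wide such strip has an interior edge lying on no critical walk and on no nonfacial essential $3$-cycle; such an edge is contractible, contradicting contraction-minimality. Since the number and sizes of the nontriangular faces are independently bounded by Lemma~\ref{l:alphacyclebounds}, and members meeting a common vertex are controlled by the degree bound of Lemma~\ref{l:degreeboundGirth}, each class has an independently bounded number of members.

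To bound the number of classes I would cut $\M$ along one essential $3$-cycle $c_0$ and cap the resulting boundary by a triangle (when $c_0$ is $2$-sided) or a triangulated hexagon (when $c_0$ is $1$-sided), obtaining strictly lower-genus surfaces $\M_1$, or $\M_1$ and $\M_2$, with derived graphs whose freedom counts are fixed by Lemma~\ref{l:alphacyclebounds}. An essential $3$-cycle of $G$ either meets $c_0$, hence passes through one of its three vertices and is accounted for by the degree bound of Lemma~\ref{l:degreeboundGirth}, or is disjoint from $c_0$ and so persists verbatim as a $3$-cycle of the derived graph $G_1$. A disjoint $3$-cycle that becomes null-homotopic after capping is necessarily homotopic in $\M$ to $c_0$ and so is already counted by the per-class bound; the remaining disjoint cycles stay essential in the derived surface, and, using the boundedly finite-to-one correspondence of homotopy classes under capping (an analogue of Lemma~\ref{l:htpy_map}) together with the induction hypothesis, contribute an independently bounded amount. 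This yields a recursion of the form (bound for $\M$) $\leq C+$ (bound for $\M_1$) $+$ (bound for $\M_2$) with $C$ independently bounded, closing the induction.

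The main obstacle, exactly as in the Barnette-Edelson argument, is that the derived graph $G_1$ need not be contraction-minimal, so the induction hypothesis does not apply to it directly. I would resolve this by the method of Lemma~\ref{l:simpleLemma}: every edge of $G$ lying on two facial $3$-cycles lies on an essential $3$-cycle or a critical walk of $G$, and only boundedly many of these are damaged by the surgery near $c_0$, so $G_1$ has an independently bounded number of edges lying on no essential $3$-cycle and no critical walk. An analogue of Lemma~\ref{l:simpleLemma} then exhibits $G_1$ as the result of an independently bounded number of vertex-splitting moves applied to a contraction-minimal graph on $\M_1$, to which the induction hypothesis applies; since a single vertex-split alters the set of essential $3$-cycles only within the bounded star of the split vertex, the essential $3$-cycle bound transfers from the minimal model to $G_1$ up to an independent additive constant. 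Establishing this $\G_{\rm pl}$-analogue of Lemma~\ref{l:simpleLemma}, and the attendant homotopy bookkeeping under cutting and capping, is the technically delicate core of the proof.
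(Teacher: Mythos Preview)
Your overall architecture—induction on genus via cutting along an essential $3$-cycle and capping, then handling the non-minimality of the derived graph through a Lemma~\ref{l:simpleLemma} analogue—matches the paper's. The crucial difference is in what quantity is inducted upon. The paper does not induct on the number of essential $3$-cycles; its induction hypothesis is that for every surface $\N$ of genus less than $g$ there is an independent bound on the number of \emph{vertices} of any contraction-minimal graph in $\G_{\rm pl}(\N,\alpha')$. (Lemmas~\ref{l:boundforU_silverBullet} and~\ref{l:indptbound_critwalks}, already proved, supply the complementary bound on critical-walk edges at the current genus and start the induction.) This stronger hypothesis makes the transfer step trivial: once $G_1$ is within boundedly many vertex-splits of a contraction-minimal $H_1$ on $\M_1$, the vertex bound on $H_1$ gives a vertex bound on $G_1$ outright, and hence a bound on all $3$-cycles of $G_1$. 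No per-homotopy-class bound and no tracking of how individual $3$-cycles behave under capping is needed.

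Your version, by inducting only on the essential $3$-cycle count, forces you to build extra machinery, and two pieces of it are shaky. First, the claim that a $3$-cycle disjoint from $c_0$ which becomes null-homotopic after capping must be homotopic in $\M$ to $c_0$ is not correct as stated: capping kills the \emph{normal closure} of $[c_0]$ in $\pi_1(\M)$, so such a cycle could a priori represent any product of conjugates of $[c_0]^{\pm 1}$, not just $[c_0]$ itself. One can rescue this by observing that since $G_1\in\G_{\rm pl}(\M_1,\alpha+6)$ and the capping faces are triangular, a null-homotopic $3$-cycle in $G_1$ bounds a fully triangulated disc, and then analysing how many capping triangles that disc can contain—but this is exactly the bookkeeping the paper's stronger induction sidesteps. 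Second, the ``boundedly finite-to-one'' analogue of Lemma~\ref{l:htpy_map} you invoke for the quotient map $\pi_1(\M)\to\pi_1(\M_1)$ is a genuinely different statement from Lemma~\ref{l:htpy_map} (which concerns an inclusion, not a quotient) and would need its own argument. Your per-class bound via concentric annular nests is plausible but likewise only sketched. The paper's route—strengthen the induction hypothesis to the full vertex count and let Lemmas~\ref{l:boundforU_silverBullet} and~\ref{l:indptbound_critwalks} carry the rest—is both simpler and what it actually does.
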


\begin{proof} 
 Assume that $\M$ has genus $g$ and is not the sphere or the  projective plane and that $G$ is a contraction minimal graph in $\G_{\rm pl}(\M,\alpha)$. By Lemmas \ref{l:boundforU_silverBullet}, \ref{l:indptbound_critwalks}
we may assume that there exists an essential 3-cycle $c$ and that no vertex of $c$ lies on the boundary walk of a nontriangular face. 
For the induction step of the proof assume that for all compact 2-manifolds $\N$ of genus less than  $g$ there is an independent bound for the number of vertices of a contraction-minimal embedded graph in $\G_{\rm pl}(\N,\alpha)$.

 As in the proof of Theorem \ref{t:main}, cut and cap $\M$ by $c$ to create the surface $\M_1$ (or pair of surfaces $\M_1, \M_2$) and the embedded graph $G_1$ (or pair $G_1, G_2$). Suppose first that there is a single connected surface $\M_1$ with embedded graph $G_1$ and that $G$ is orientable. We have $f(G)=\alpha$ and so $f(G_1)=\alpha+6$.
The embedded graph $G_1$ in $\M_1$ is cellular and it satisfies the planar girth inequalities since any critical walk $d$ for $G_1$ with open disc face $U$ containing nontriangular faces of $G_1$ is also a critical walk for $G$ for the same nontriangular faces as they appear in $G$. Thus $G_1$ is in $\G_{\rm pl}(\M_1,\alpha+6)$.
If $e$ is an edge of $G_1$ that is not an edge of the capping 3-cycle and if $e$ is contractible, with $G_1/e$ in $\G_{\rm pl}(\M_1,\alpha+6)$, then, as an edge of the contraction minimal graph $G$ it must lie on an essential 3-cycle or critical walk that passes through a vertex of $c$. Since the degrees of the vertices of $G$ have an independent bound, by Lemma \ref{l:degreeboundGirth}, it follows that the number of such edges has a uniform bound. The theorem follows in this case, as in the proof of Lemma \ref{l:simpleLemma} for almost minimal triangulations.

The same argument holds for the case that $\M$ is orientable and the cut leads to 2 surfaces $\M_1, \M_2$, and the nonorientable cases are entirely similar. 
\end{proof}

We may now complete the proof of Theorem \ref{t:finiteforgirth}, that there are finitely many contraction-minimal embedded graphs $G$, with $f(G)=\alpha$, that satisfy the planar type girth inequalities.
Indeed, there is an independent bound for the number of edges that are boundary edges for a nontriangular face, and every other edge of $G$ lies on an essential 3-cycle or a critical walk and Lemma \ref{l:indptbound_critwalks} and Proposition \ref{p:bound_on3cyclesG_pl} provide independent bounds for these.


\section{Higher genus girth conditions and (3,6)-tight graphs}\label{s:generalGIand}

In this section we define and discuss higher genus girth conditions and characterise cellular $(3,6)$-tight embedded graphs in these terms.

Let $G\subset \M$ be an embedded graph in a compact connected 2-manifold $\M$. A \emph{superface} $U$ of $G$ is a face of a subgraph $K$ of $G$ where $U$ is not dense and $K$ has no vertices of degree 0 or 1. The nondensity condition holds if and only if the complement of $U$ contains a face of $G$. 

Let $G_U$ be the embedded subgraph given by the vertices and edges of $G$ that lie in the closure $U^-$ of the superface $U$. Also, let $G_U^c$ denote the  embedded graph 
determined by the edges of $G$ that are disjoint from $U$. 
In particular, $G=G_U \cup G_U^c$ and $G_U \cap G_U^c$ is the embedded graph $\partial U$ whose edges are the boundary edges of $U$. We assume that $\alpha \geq 3$.



A \emph{balanced} superface of $G$ is a superface $U$ for which the complement of the closure of $U$ is nonempty and connected. A \emph{simple} superface $U$ is a superface whose boundary consists of disjoint cycles. 
For a simple superface $U$ with disjoint cycles $d_1, \dots , d_s$ forming the boundary, the closure of $U$ is a surface with boundary, and so the genus of $U$, denoted $g(U)$, is well-defined as the genus of this closure, $U^-$. Alternatively, $g(U)$ is the genus of the surface $\M_U$ obtained by attaching a disc to  each boundary cycle.

 The \emph{reduced genus} $g_r(\M)$ is defined to be $g(\M)$ if $\M$ is oriented and to be $g(\M)/2$ if $\M$ is not oriented. If $U$ is a balanced simple superface with complementary superface $W$, and boundary cycles $d_1, \dots , d_s$ then we have the following addition formula.
\[
g_r(\M)=g_r(\M_U)+g_r(\M_W)+(s-1).
\]
This may be verified by considering a triangulation of $\M$ incorporating the edges of the boundary cycles of $U$ and the associated equation for the Euler characteristic, namely
\[
\chi=v-e+f=2-2g_r(\M).
\]

It follows from Lemma \ref{l:alphacyclebounds} that if $G$ is a cellularly embedded graph with  $f(G)=3v-e=\alpha $ and nontriangular faces, $U_1, \dots , U_n$, with closed boundary walks $c_1, \dots,  c_n$, then 
\[
\sum_k (|c_k|-3) = 6g_r(\M) + f(G)-6.
\]
We refer to this as the \emph{face walk equation} for $G$. It shows  that there is an independent bound for the lengths $|c_k|$ and for $n$, depending only on the genus of $\M$ and $\alpha$. 


If $U$ is a balanced and simple superface of $G$  with complementary superface $W$ (that is, the complement of the closure of $U$) then we may consider the embedded subgraph $G_W$ also as an embedded graph in the 2-manifold $\M_W$.
Then the face walk equation for $G_W\subset \M_W$ takes the form
\[
\sum_{k=1}^s (|d_k|-3) + \sum_{k\in I(W)} (|c_k|-3) = 6g_r(\M_W) + f(G_W)-6.
\]
where $I(W)$ is the set of indices $k$ for which $U_k$ is contained in $W$. 

\begin{lemma}\label{l:GIforSymmetric_UKW_CYCLES_B}
Let $G$ be a cellular $\M$-embedded graph with $f(G)=\alpha$, let $U$ be a balanced simple superface of $G$ with complementary superface $W$, and let $d_1, \dots , d_{s}$ be the common boundary cycles of $U$ and $W$. Then the following inequalities are equivalent.
\medskip

(i) \quad  $f(G_W) \geq \alpha$.

(ii) 
\[
\sum_{k=1}^s (|d_k|-3) \geq 6g_r(W)- \sum_{k\in I(W)} (|c_k|-3) +(\alpha-6).
\]

(iii) 
\[
\sum_{k=1}^s (|d_k|-3) \geq \sum_{k\in I(U)} (|c_k|-3)-6 (g_r(U)+s-1)-(\alpha-6).
\]
\end{lemma}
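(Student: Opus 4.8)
The plan is to treat all three statements as linear relations among one fixed pool of combinatorial quantities---the boundary lengths $|d_k|$, the nontriangular face lengths $|c_k|$, the reduced genera $g_r(\M),g_r(\M_U),g_r(\M_W)$, and the count $\alpha$---and to pass between them using the face walk equation in its three incarnations (for $G\subset\M$, for $G_U\subset\M_U$, and for $G_W\subset\M_W$) together with the reduced-genus addition formula. No topology beyond these identities is needed, so I would split the argument into establishing (i)$\iff$(ii) and then (ii)$\iff$(iii).

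For (i)$\iff$(ii) I would simply apply the face walk equation to $G_W$, regarded as a cellular graph in the capped surface $\M_W$. The point to verify is that the nontriangular faces of $G_W\subset\M_W$ are exactly the faces indexed by $I(W)$ together with the $s$ newly created disc faces bounded by $d_1,\dots,d_s$; this is precisely the displayed face walk equation for $G_W\subset\M_W$ recorded just before the lemma. Solving that equation for $f(G_W)$ and imposing $f(G_W)\ge\alpha$ rearranges term by term into inequality (ii) (using $g_r(W)=g_r(\M_W)$), and the step is clearly reversible.

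For (ii)$\iff$(iii) I would change variables from the $W$-side to the $U$-side using two facts. First, since $\partial U\subseteq G$ and each face of $G$ is a connected set disjoint from $G$, every nontriangular face lies in exactly one of $U$ or $W$, so $I(U)$ and $I(W)$ partition $\{1,\dots,n\}$; the face walk equation for $G\subset\M$ (from Lemma~\ref{l:alphacyclebounds}) then lets me replace $\sum_{k\in I(W)}(|c_k|-3)$ by $6g_r(\M)+\alpha-6-\sum_{k\in I(U)}(|c_k|-3)$. Second, the reduced-genus addition formula $g_r(\M)=g_r(\M_U)+g_r(\M_W)+(s-1)$ lets me rewrite $6g_r(\M_W)$ in terms of $g_r(\M_U)=g_r(U)$ and $s$. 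Substituting both relations into (ii) and simplifying, the $g_r(\M)$ contributions cancel against one another and the inequality collapses to the $U$-form (iii).

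The main obstacle is not any single deep step but getting the capped-surface bookkeeping exactly right: one must make sure the cycles $d_k$ become genuine (and generally nontriangular) disc faces of $G_W$ in $\M_W$ and are each counted once in the face walk equation, and that the reduced genus $g_r$ uniformly absorbs the orientability factor $\mu$ of Lemma~\ref{l:alphacyclebounds}, so that the single addition formula is valid in both the orientable and nonorientable cases. Once these conventions are fixed, the two equivalences are routine linear manipulations in the listed quantities.
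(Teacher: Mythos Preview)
Your proposal is correct and matches the paper's own proof essentially line for line: the paper derives (i)$\iff$(ii) directly from the face walk equation for $G_W\subset\M_W$, and (ii)$\iff$(iii) from the face walk equation for $G\subset\M$ combined with the reduced-genus addition formula. Your additional remarks on why $I(U)$ and $I(W)$ partition the nontriangular faces and on the role of $g_r$ in absorbing the orientability factor $\mu$ are accurate elaborations that the paper leaves implicit.
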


\begin{proof}
That (i) is equivalent to (ii) follows from the face walk equation for $G_W$ in $\M_W$. The equivalence of (ii) and (iii) follows from the face walk equation for $G$ together with the addition formula for the reduced genus.
\end{proof}


A balanced simple superface $U$ in $G$ with boundary cycle set $\{d_1, \dots , d_s\}$  is said to satisfy the \emph{higher genus girth inequality} if the inequality in Lemma \ref{l:GIforSymmetric_UKW_CYCLES_B}(iii) holds. 
The lemma ensures that this is equivalent to the $\alpha$-sparsity of the complementary graph $G_W$.
Also $U$ is a \emph{critical superface} of $G$ if this inequality is an equality. In this case we also say that the cycle set is a \emph{critical cycle set} for $U$. We now define these properties for a general superface using an appropriate definition of reduced genus.

A general superface $U'$ of an embedded graph $G\subset \M$ has a set of closed walks, say $\{d_1', \dots , d_s'\}$.
Also $U'$ is the face of a subgraph $H$ with no vertices of degree 0 or 1, and so consecutive edges of the boundary walk, say $xv, vy$, are distinct with $x\neq y$. We may therefore effect a vertex-splitting move at $v$ that creates a new vertex $w$ located in $U'$ together with embedded edges $xw, vw, yw$ and the facial 3-cycles $xvw, vwy$. 
This move creates a new $\M$-embedded graph $G_1$ containing $G$ together with a subgraph $H_1$ and face $U'_1\subset U'$. Moreover $w$ appears once only on the boundary walk for $U_1'$
and appears on no other boundary walk of $H_1$. Repeating
such vertex splitting operations leads to a simple superface $U$, with $U\subset U'$, that is the face of an embedded graph $H_n \subset \M$ whose closed boundary walks are cycles.
Define $U'$ and  $\{d_1', \dots , d_s'\}$ to be \emph{critical}
if $U$ or, equivalently, its set of boundary cycles, say $\{d_1, \dots , d_s\}$,  is critical.

Noting that in general the closure of a superface $U'$ is not a surface with disjoint boundary cycles we \emph{define} the reduced genus $g_r(U')$ to be equal to $g_r(U)$. 

In view of the Lemma \ref{l:GIforSymmetric_UKW_CYCLES_B} and noting that the lengths of $d_i$ and $d_i'$ agree for each $i$, we now give the definition of a general  critical superface and  the associated notion of a \emph{critical edge} of $G$.

\begin{definition}\label{d:criticalsuperface}
(i) A superface $U$ of the (3,6)-tight embedded graph $G$, with boundary walks $\{d_1, \dots , d_s\}$, satisfies the \emph{higher genus girth inequality} if
\[
\sum_{k=1}^s (|d_k|-3) \geq  \sum_{k\in I(U)} (|c_k|-3)-6 (g_r(U)+s-1)-(\alpha-6)
\]
and is a \emph{critical superface} if equality holds.

(ii) An edge $e$ of $G$ is a \emph{critical edge} if it is incident to 2 triangular faces and at least one of the following holds: 
(a) $e$  lies on an essential 3-cycle, that is, one whose curve is not null-homotopic.  
(b) $e$ lies on one of the boundary walks of a critical superface. 
\end{definition}

Suppose now that $U$ is a superface that is not balanced, so that the complementary open set $W=\M\backslash U^-$ has components $W_1, \dots , W_\kappa$, with $\kappa\geq 2$. Then, with the same proof we have the following generalisation of Lemma \ref{l:GIforSymmetric_UKW_CYCLES_B}. 

\begin{lemma}\label{l:GIequivalence_general}
Let $G$ be a cellular $\M$-embedded graph with $f(G)=\alpha$ and let $U$ be a superface of $G$ with boundary walks $d_1, \dots , d_s$ and where the complement $W$ of $U^-$ has connected components $W_1, \dots , W_\kappa$. Then the following statements are equivalent.
\medskip

(i)\quad   $\sum_{i=1}^\kappa f(G_{W_i}) \geq \kappa\alpha$.
\medskip

(ii)  $U$ satisfies the higher genus girth inequality. 
\end{lemma}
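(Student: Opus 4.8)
The plan is to imitate the proof of Lemma \ref{l:GIforSymmetric_UKW_CYCLES_B}, replacing the single complementary graph $G_W$ by the family $G_{W_1}, \dots, G_{W_\kappa}$ and summing the corresponding face walk equations. First I would reduce to the case in which $U$ is a simple superface, so that each boundary walk $d_k$ is a cycle, using the vertex-splitting moves introduced before Definition \ref{d:criticalsuperface}. Since these moves take place strictly inside $U$, they fix each boundary length $|d_k|$, the count $f(G)=\alpha$, the complementary components $W_1, \dots, W_\kappa$ together with their embedded graphs $G_{W_i}$, and, by definition, the reduced genus $g_r(U)$. Thus it suffices to treat disjoint boundary cycles $d_1, \dots, d_s$, which partition as $d_k$, $k\in J_i$, according to the component $W_i$ they bound.

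Next I would apply the face walk equation to each $G_{W_i}\subset \M_{W_i}$ --- whose nontriangular faces are the discs capping $\{d_k : k\in J_i\}$ together with the faces $c_k$, $k\in I(W_i)$, inherited from $G$ --- and sum over $i$ to obtain
\[
\sum_{k=1}^s(|d_k|-3) + \sum_{k\in I(W)}(|c_k|-3) = 6\sum_{i=1}^\kappa g_r(\M_{W_i}) + \sum_{i=1}^\kappa f(G_{W_i}) - 6\kappa,
\]
where $I(W)=\bigcup_i I(W_i)$. This is the $\kappa$-fold analogue of the equation that drives the equivalence of (i) and (ii) in Lemma \ref{l:GIforSymmetric_UKW_CYCLES_B}, so statement (i), namely $\sum_i f(G_{W_i})\geq \kappa\alpha$, is immediately equivalent to replacing $\sum_i f(G_{W_i})$ by $\kappa\alpha$ in this display and passing to an inequality.

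To recast this as the higher genus girth inequality for $U$, I would use two further ingredients, exactly as in the balanced case. The first is the face walk equation for $G$ in $\M$, split over $I(U)$ and $I(W)$, which eliminates $\sum_{k\in I(W)}(|c_k|-3)$ in favour of $\sum_{k\in I(U)}(|c_k|-3)$ and the quantity $6g_r(\M)+\alpha-6$. The second is the generalised reduced-genus addition formula
\[
g_r(\M) = g_r(\M_U) + \sum_{i=1}^\kappa g_r(\M_{W_i}) + (s-\kappa),
\]
which I would derive by the same Euler-characteristic computation as the balanced formula: cut $\M$ along the $s$ cycles into the $\kappa+1$ pieces $U^-, W_1^-, \dots, W_\kappa^-$, cap each with discs, and apply $\chi=2-2g_r$; separating into $\kappa+1$ rather than two pieces is precisely what turns the constant $s-1$ into $s-\kappa$. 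Substituting both relations into the summed equation, the $g_r(\M)$ terms cancel and one is left with the higher genus girth inequality for $U$, establishing (i) $\Leftrightarrow$ (ii).

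The step I expect to require the most care is the constant bookkeeping in this final substitution. The $\kappa$-fold summation contributes factors of $\kappa$ to the constant and to the $\alpha$-terms, while the addition formula contributes the shifted constant $s-\kappa$; tracking these shows that the genus term $g_r(U)+s-1$ of the balanced inequality (Definition \ref{d:criticalsuperface}(i)) is here replaced by $g_r(U)+s-\kappa$, the two agreeing exactly when $\kappa=1$. For the principal case $\alpha=6$ the explicit $\alpha$-terms cancel and the resulting inequality reads
\[
\sum_{k=1}^s(|d_k|-3) \geq \sum_{k\in I(U)}(|c_k|-3) - 6\bigl(g_r(U)+s-\kappa\bigr).
\]
This is the only point at which the non-balanced geometry genuinely enters; the remainder of the argument is formally identical to the proof of Lemma \ref{l:GIforSymmetric_UKW_CYCLES_B}.
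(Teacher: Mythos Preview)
Your approach is precisely what the paper intends: its entire proof is the sentence ``with the same proof'' referring back to Lemma~\ref{l:GIforSymmetric_UKW_CYCLES_B}, so summing the face walk equations over the $\kappa$ complementary components and invoking the generalised addition formula $g_r(\M)=g_r(\M_U)+\sum_i g_r(\M_{W_i})+(s-\kappa)$ is exactly the argument. Your observation that the constant $s-1$ in Definition~\ref{d:criticalsuperface}(i) becomes $s-\kappa$ in the non-balanced case is a genuine clarification that the paper leaves implicit; it does not affect the applications (Theorem~\ref{t:Gr_equals_F} uses the converse direction only with $\kappa=1$), but it is worth recording.
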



\begin{lemma}\label{l:balancedisenough}
Let $G$ be a cellular $\M$-embedded graph with $f(G)=\alpha$ and suppose that $f(G_W) \geq \alpha$ for all balanced superfaces of $G$. Then $f(G_U)\geq \alpha$ for each superface $U$ of $G$.
\end{lemma}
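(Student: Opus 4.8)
The plan is to reduce the claim for an arbitrary superface $U$ to the balanced hypothesis by \emph{filling in all but one} of the complementary components of $U$. Let $U$ have complementary components $W_1,\dots,W_\kappa$, so that $\kappa\geq 1$ and $\kappa=1$ is precisely the balanced case, which is immediate (then $U$ itself is a balanced superface, so $f(G_U)\geq\alpha$ by hypothesis). For $\kappa\geq 2$ and each $i$ set $U_i=\M\setminus W_i^-$, the region obtained by filling in every complementary component except $W_i$. Since $U$ is connected and is adjacent to each $W_j$, the set $U_i$ is connected, so $U_i$ is a \emph{balanced} superface whose single complementary component is $W_i$. Hence the hypothesis applies to each $U_i$ and gives $f(G_{U_i})\geq\alpha$.

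The second step is a counting identity obtained from the additivity (inclusion--exclusion) of the freedom count $f(H)=3v(H)-e(H)$. Writing $\partial_i$ for the boundary graph separating $U$ from $W_i$, one has globally
\[
f(G)=f(G_U)+\sum_{i=1}^\kappa f(G_{W_i})-\sum_{i=1}^\kappa f(\partial_i),
\]
since $G=G_U\cup\bigcup_i G_{W_i}$, each $G_{W_i}$ meets $G_U$ exactly in $\partial_i$, and (for a simple superface) the $G_{W_i}$ are otherwise pairwise disjoint. Applying the same additivity to the balanced pair $(U_i,W_i)$, whose common boundary is again $\partial_i$, gives $f(G)=f(G_{U_i})+f(G_{W_i})-f(\partial_i)$, that is $f(\partial_i)=f(G_{U_i})+f(G_{W_i})-\alpha$. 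Substituting this into the global identity cancels the unknown terms $f(G_{W_i})$ and yields
\[
f(G_U)=\sum_{i=1}^\kappa f(G_{U_i})-(\kappa-1)\alpha.
\]
Since each $f(G_{U_i})\geq\alpha$ by the first step, this gives $f(G_U)\geq \kappa\alpha-(\kappa-1)\alpha=\alpha$, as required.

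I expect the main technical obstacle to be the point--set bookkeeping of the boundary when $U$ is not simple: pinch vertices may be shared by several of the $W_i$, so the graphs $G_{W_i}$ need no longer be pairwise disjoint and the inclusion--exclusion above acquires correction terms. I would handle this exactly as the reduced genus is set up in the paper, namely by applying the vertex-splitting construction preceding Definition \ref{d:criticalsuperface} to pass to an associated simple superface; each split adds one vertex and three edges, hence preserves $f(G)=\alpha$ as well as the values $f(G_U)$ and $f(G_{U_i})$, reducing the general case to the simple case above. The only other points to verify are the two easy topological facts used throughout, both following from $W_i$ being a connected component of $\M\setminus U^-$: that $\M\setminus W_i^-$ is connected (so that $U_i$ is genuinely balanced), and that the entire boundary of each $W_i$ is shared with $U$ (so that the $\partial_i$ occurring in the global and per-component identities coincide).
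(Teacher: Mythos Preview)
Your proof is correct and follows essentially the same approach as the paper: both reduce to the simple-boundary case via vertex splitting, form the balanced superfaces $U_i$ by filling in all complementary components except $W_i$, and apply inclusion--exclusion for the linear invariant $f$. Your explicit identity $f(G_U)=\sum_{i=1}^\kappa f(G_{U_i})-(\kappa-1)\alpha$ is algebraically equivalent to the paper's computation, which sets $a=f(G_U)$, $a_i=f(G_{W_i})-f(\partial W_i)$, uses $a+\sum_i a_i=\alpha$ and $a+\sum_{j\neq i}a_j=f(G_{U_i})\geq\alpha$, and sums the $\kappa$ inequalities; the only difference is that the paper puts the vertex-splitting reduction first rather than last.
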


\begin{proof} 
To prove the equivalence we may assume that the boundary of $U$ consists of disjoint cycles.
To see this note that the superface $U'$ for the embedded graph $G'\supset G$ obtained by vertex splitting on the boundary $\partial U$ satisfies 
$f(G_{U'})=f(G_U)$.
 
Let 
$W_1, \dots ,W_\kappa$ be the connected components of the complement of $U^-$ and let
\[
U_i'=U\cup \bigcup_{j\neq i} W_j.
\]
Then $U_i$ is a balanced superface and $f(G_{U_i})\geq \alpha$ by assumption.
Let $a=f(G_U)$ and $a_i=f(G_{W_i})-f(\partial W_i)$, for $1\leq i \leq \kappa$. Then
\[
a+\sum_{i=1}^\kappa a_i =f(G)=\alpha \quad \mbox{and} \quad 
a+\sum_{j\neq i} a_j =f(G_{U_i})\geq \alpha.
\]
Adding the inequalities and substituting $\alpha-a$ for the sum  $\sum_j a_j$ gives $a\geq \alpha$ as desired.
\end{proof}

Let $\gamma$ be the genus of $\M$ and define the families
\[
\G_0(\M,\alpha)\supseteq \G_1(\M,\alpha) \supseteq \dots \supseteq \G_\gamma(\M,\alpha)
\]
where $\G_k(\M,\alpha)$ consists of the $\M$-embedded cellular graphs with $f(G)=\alpha$ that satisfy the girth inequalities for balanced superfaces $U$ with $g(U) \leq k.$

\subsection{Girth conditions and $(3,6)$-sparsity}

We now obtains a girth inequality characterisation of cellularly embedded $(3,6)$-tight graphs.

\begin{definition}\label{d:generalgirthineq} An $\M$-embedded simple graph $G$  with the sparsity count $f(G)=\alpha$ satisfies the \emph{superface sparsity inequalities} if $f(G_U)\geq \alpha$  for all superfaces $U$. 
\end{definition}


The following theorem is due to Shakir \cite{sha} and it gives an insight into the $(3,6)$-tightness of $G$. In fact it is the ``no blocks case" of a more general equivalence for the higher genus analogues of the block and hole graphs described in the introduction.  

\begin{thm}\label{t:generalgirthequals}
Let $G$ be a cellularly embedded graph in the connected compact 2-manifold $\M$. Then the following are equivalent. 

(i) $G$ is (3,6)-tight.

(ii) $G$ satisfies the sparsity count $f(G)=6$ and the associated superface sparsity inequalities, $f(G_U)\geq 6$ for all superfaces $U$ of $G$.
\end{thm}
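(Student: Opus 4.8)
The plan is to prove the two implications separately, using the superface sparsity inequalities as the bridge between the local $(3,6)$-sparsity condition and the cellular embedding data. The key observation is that a subgraph and a superface are dual objects: a superface $U$ is (by definition) a face of a subgraph $K$ of $G$, and its complementary graph $G_U^c$ is precisely an embedded subgraph of $G$. So I would pass freely between ``subgraph with $f\geq 6$'' and ``superface with $f(G_U)\geq 6$'' via the relation $G=G_U\cup G_U^c$ with intersection $\partial U$.

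For the direction (i) $\Rightarrow$ (ii), I would start from a $(3,6)$-tight graph $G$, so $f(G)=6$ is immediate. Given any superface $U$, its complementary graph $G_U^c$ is a subgraph of $G$, and by $(3,6)$-sparsity $f(G_U^c)\geq 6$ (assuming it has at least $3$ vertices, a degenerate case I would handle separately). By Lemma~\ref{l:balancedisenough} it suffices to verify the inequality for \emph{balanced} superfaces, and for these the equivalence in Lemma~\ref{l:GIforSymmetric_UKW_CYCLES_B} (with $\alpha=6$) converts $f(G_W)\geq 6$ for the complementary superface $W$ directly into the higher genus girth inequality for $U$. The only care needed is bookkeeping of the boundary term $f(\partial U)$ when intersecting $G_U$ and $G_U^c$, using inclusion-exclusion $f(G_U)+f(G_U^c)=f(G)+f(\partial U)$.

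For the harder direction (ii) $\Rightarrow$ (i), I assume $f(G)=6$ together with $f(G_U)\geq 6$ for all superfaces, and must deduce $f(H')\geq 6$ for \emph{every} subgraph $H'$ with $v\geq 3$. The natural strategy is to realise an arbitrary subgraph $H'$ as $G_U^c$ for a suitable superface $U$, or equivalently to take $U$ to be a face of $H'$ that is nondense; then the sparsity inequality $f(G_U)\geq 6$, combined with the inclusion-exclusion identity and $f(G)=6$, forces $f(G_U^c)\leq 6$, and one wants the reverse bound from a complementary superface. The main obstacle is exactly this: a superface requires its defining subgraph to have no degree-$0$ or degree-$1$ vertices and requires nondensity (the complement of $U$ must contain a face of $G$), so a general subgraph $H'$ need not directly arise this way. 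I expect the real work to be a reduction argument showing it suffices to check subgraphs that are closures of complements of superfaces---pruning degree-$\leq 1$ vertices (which only decreases $f$ if anything, keeping sparsity) and handling subgraphs whose complement contains no full face of $G$ by a direct counting argument. Once every relevant subgraph is matched to a superface, the equivalence in Lemma~\ref{l:GIforSymmetric_UKW_CYCLES_B} and the addition formula for reduced genus close the loop.

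Given the remark that this is the ``no blocks case'' of a more general result of Shakir, I anticipate the cleanest writeup threads everything through the reduced-genus addition formula $g_r(\M)=g_r(\M_U)+g_r(\M_W)+(s-1)$ and the face walk equation, so that both implications become two readings of the same identity; the genuine difficulty is the combinatorial reduction from arbitrary subgraphs to the superface framework, not the algebra.
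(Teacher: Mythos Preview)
For (i) $\Rightarrow$ (ii) you are working much too hard: $G_U$ is itself a subgraph of $G$ with at least three vertices (its boundary contains a cycle), so $(3,6)$-sparsity gives $f(G_U)\geq 6$ directly, as the paper notes in one line. Your detour through Lemma~\ref{l:GIforSymmetric_UKW_CYCLES_B} does not land on the right target anyway: that lemma shows the girth inequality for $U$ is equivalent to $f(G_W)\geq 6$ for the \emph{complementary} superface $W$, not to $f(G_U)\geq 6$.

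The genuine gap is in (ii) $\Rightarrow$ (i). You correctly identify the obstacle---an arbitrary subgraph need not be of the form $G_U^c$---but the proposed fix is wrong. Pruning degree-$\leq 1$ vertices from a violating $H'$ gives some $H''$ with $f(H'')\leq 5$ and no low-degree vertices, but for a face $U$ of $H''$ one has only $H''\subseteq G_U^c$, with strict containment whenever $G$ has edges interior to another face of $H''$. Since $f$ is not monotone under inclusion, $f(H'')\leq 5$ says nothing about $f(G_U^c)$. Your inclusion--exclusion claim that ``$f(G_U)\geq 6$ forces $f(G_U^c)\leq 6$'' is also incorrect: from $f(G_U)+f(G_U^c)=6+f(\partial U)$ one gets only $f(G_U^c)\leq f(\partial U)$, and for a boundary cycle of length $n$ one has $f(\partial U)=2n$.

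The paper's argument runs in the opposite direction---growing the violator rather than shrinking it. Take $K$ \emph{maximal} among subgraphs with at least three vertices and $f(K)\leq 5$. For each face $U$ of $K$, the hypothesis applied to the components $W_i$ of $\M\setminus U^-$ gives $f(G_U^c)\geq 6$; inclusion--exclusion then yields $f(G_U)-f(\partial U)=6-f(G_U^c)\leq 0$, hence $f(K\cup G_U)=f(K)+f(G_U)-f(K\cap G_U)\leq f(K)\leq 5$. Maximality forces $G_U\subseteq K$, and running over all faces of $K$ gives $K=G$, contradicting $f(G)=6$. This absorption step, or equivalently the passage to a maximal violator, is the idea your proposal is missing.
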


\begin{proof}
That (i) implies (ii) is immediate. Suppose that (ii) holds and (i) is not true. Then there is a maximal subgraph $K$ of $G$, with at least 3 vertices and  $f(K)\leq 5$. 
Let $U$ be a face of $K$ and suppose first that the boundary walks of $U$ consist of disjoint cycles. Then the complement of the closure of $U$ is the union of components, say $W_1, \dots ,W_r$, that have disjoint closures. In particular, $G_U^c$ is the union of the  disjoint subgraphs,
$G_{W_1}, \dots , G_{W_r}$ and $f(G_U^c)\geq 6r$, by the superface sparsity inequalities. 

In the general case we perform, once again, vertex splitting moves on the vertices of each boundary walk of $W_1, \dots ,W_r$, creating new vertices and edges in each $W_i$, and superfaces $W_i'\subset W_i$ for the embedded graph $K'$ determined by $K$ and the vertex-splitting moves. We can also view these moves as defined on $G_U^c$, determining an embedded graph $(G_U^c)'$. Since 
$f((G_U^c)')$ is sum of $f(G_{W_i'}^c)$, for $1\leq i \leq r$, we have $f((G_U^c)')\geq 6r$. Also $f((G_U^c)')=f(G_{U}^c)$ and so in all cases we have $f(G_U^c)\geq 6$.


Since  $f(G)=f(G_U) + f(G_U^c) - f(G_U \cap G_U^c)$ it follows that
$f(G_U) - f(G_U \cap G_U^c)\leq 0$.
Also, $\partial U = K \cap G_U=G_U \cap G_U^c$ and
$f(K\cup G_U)=f(K) + (f(G_U) - f(K \cap G_U)),$
and so $f(K\cup G_U)\leq 5$. Thus $K=K\cup G_U$ by the maximality of $K$. Since this is true for all faces $U$ of $K$ it follows that $K=G$, a contradiction, as required. Indeed, simple counting shows that $K$ is not a tree and so has at least 2 faces $U_1, U_2$, and $G$ is the union of $G_{U_1}^c$ and $G_{U_2}^c$.
\end{proof}

An $\M$-embedded graph $G$ is \emph{$(3,3)$-tight} if the sparsity count $f(G)=3$ holds and $f(G')\geq 3$ for all subgraphs $G'$. We write $\F(\M,3)$ for the family of cellularly embedded $(3,3)$-tight graphs.
The argument of the previous theorem shows similarly that $G$ is in $\F(\M,3)$ if and only if
$f(G)=3$ and  $f(G_U)\geq 3$ for all superfaces $U$ of $G$.

\begin{thm}\label{t:Gr_equals_F}
Let $\M$ be a connected compact 2-manifold with genus $\gamma$. Then $\G_\gamma(\M,6)=\F(\M,6)$ and $\G_\gamma(\M,3)=\F(\M,3)$. 
\end{thm}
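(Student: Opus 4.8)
The plan is to prove the two equalities $\G_\gamma(\M,6)=\F(\M,6)$ and $\G_\gamma(\M,3)=\F(\M,3)$ by reducing both to Theorem \ref{t:generalgirthequals} (and its $(3,3)$-analogue) via the equivalences established in this section. The key observation is that membership of $\G_\gamma(\M,\alpha)$ means $f(G)=\alpha$ together with the higher genus girth inequalities for all \emph{balanced} superfaces $U$ of every genus $g(U)\leq\gamma$; since $\gamma=g(\M)$ bounds the genus of any superface, this is in fact the girth inequality for \emph{all} balanced superfaces. By Lemma \ref{l:GIforSymmetric_UKW_CYCLES_B}, the girth inequality for a balanced simple superface $U$ (with complementary superface $W$) is equivalent to the sparsity count $f(G_W)\geq\alpha$. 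Thus $\G_\gamma(\M,\alpha)$ is exactly the family of cellular embedded graphs with $f(G)=\alpha$ satisfying $f(G_W)\geq\alpha$ for all balanced superfaces $W$.

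First I would make precise the passage from balanced to arbitrary superfaces. The girth inequalities in the definition of $\G_k$ are stated only for \emph{balanced simple} superfaces, whereas Theorem \ref{t:generalgirthequals} requires $f(G_U)\geq 6$ for \emph{all} superfaces. Lemma \ref{l:balancedisenough} bridges exactly this gap: it shows that if $f(G_W)\geq\alpha$ for all balanced superfaces, then $f(G_U)\geq\alpha$ for every superface. I would also invoke the vertex-splitting reduction (used in Lemma \ref{l:balancedisenough} and in the general-superface definitions) to reduce the \emph{simple} case to the general one, noting that vertex splitting on $\partial U$ produces $G'\supset G$ with $f(G_{U'})=f(G_U)$, so the sparsity counts are unaffected and one may always assume boundaries are disjoint cycles.

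With these two reductions in hand, the argument for $\alpha=6$ runs as follows. If $G\in\G_\gamma(\M,6)$ then $f(G)=6$ and the balanced girth inequalities hold, so by Lemma \ref{l:GIforSymmetric_UKW_CYCLES_B} we have $f(G_W)\geq 6$ for every balanced superface; by Lemma \ref{l:balancedisenough} this extends to $f(G_U)\geq 6$ for all superfaces, which is precisely condition (ii) of Theorem \ref{t:generalgirthequals}, hence $G$ is $(3,6)$-tight and $G\in\F(\M,6)$. Conversely, if $G\in\F(\M,6)$ then $G$ is $(3,6)$-tight, so by Theorem \ref{t:generalgirthequals} the superface sparsity inequalities $f(G_U)\geq 6$ hold for all superfaces $U$; restricting to balanced simple superfaces and applying Lemma \ref{l:GIforSymmetric_UKW_CYCLES_B} in reverse yields the higher genus girth inequalities, so $G\in\G_\gamma(\M,6)$. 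The $\alpha=3$ case is identical, using the $(3,3)$-analogue of Theorem \ref{t:generalgirthequals} stated immediately before the theorem (that $G\in\F(\M,3)$ iff $f(G)=3$ and $f(G_U)\geq 3$ for all superfaces).

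The step I expect to require the most care is verifying that the girth inequalities for balanced superfaces of genus at most $\gamma$ really capture \emph{all} balanced superfaces, and that the simple-versus-general and balanced-versus-general reductions compose correctly without hidden gaps. In particular one must confirm that every superface genus is automatically $\leq\gamma=g(\M)$ (so the cutoff in the definition of $\G_\gamma$ is vacuous at the top level), and that Lemma \ref{l:GIforSymmetric_UKW_CYCLES_B} is being applied with the reduced genus $g_r$ consistently on both sides of the equivalence. Beyond this bookkeeping the proof is a direct chaining of the three results Lemma \ref{l:GIforSymmetric_UKW_CYCLES_B}, Lemma \ref{l:balancedisenough}, and Theorem \ref{t:generalgirthequals}, so the only real content lies in the correct identification of $\G_\gamma$ with the superface sparsity condition.
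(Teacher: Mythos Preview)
Your proposal is correct and follows essentially the same route as the paper's own proof: convert the higher genus girth inequalities for balanced superfaces into the sparsity counts $f(G_W)\geq\alpha$ via the equivalence lemma, pass from balanced to arbitrary superfaces using Lemma~\ref{l:balancedisenough}, and then invoke Theorem~\ref{t:generalgirthequals} (and its $(3,3)$-analogue). The only cosmetic difference is that the paper cites Lemma~\ref{l:GIequivalence_general} (the general-superface version) in place of Lemma~\ref{l:GIforSymmetric_UKW_CYCLES_B} plus the vertex-splitting reduction, but since Lemma~\ref{l:GIequivalence_general} is stated with the same proof this is immaterial; your bookkeeping about the genus bound $g(U)\leq\gamma$ and the simple/general reduction is exactly what is needed.
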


\begin{proof} 
Let $U$ be a superface of $G$ where the complement of the closure of $U$ has components $W_1, \dots , W_\kappa$. If $G$ is in $\F(\M,6)$ then $f(G_{W_i})\geq 6$ for each $i$ and so the inequality of Lemma \ref{l:GIequivalence_general} (i) holds. By the equivalences given in the lemma $U$ satisfies the higher genus girth inequalities and so  $G$ is in $\G_\gamma(\M,6)$.


For the converse let $G$ belong to $\G_\gamma(\M,6)$ and let $W$ be a balanced superface of $G$. If $U$ is the complementary superface then $U$ satisfies the higher rank girth inequality and so Lemma \ref{l:GIequivalence_general} (ii) holds for $U$. By the equivalence of the lemma, with $W_1=W, \kappa=1$, we have $f(G_W)\geq 6$. Now, by Lemma \ref{l:balancedisenough} for $\alpha=6$, $G$ satisfies the  superface sparsity inequalities and so, since $f(G)=6$, it follows from Theorem \ref{t:generalgirthequals} that $G$ is $(3,6)$-tight.
The same argument, with trivial adjustments, shows that $\G_\gamma(\M,3)=\F(\M,3)$. 
\end{proof}




\section{Contraction-minimality and sparse graphs.}\label{s:36minimalsThm}

We now obtain analogues of the Barnette-Edelson finiteness theorem for partial triangulations of a 2-manifold satisfying higher genus girth conditions. This leads to the finiteness of contraction-minimal graphs in sparse graph families, such as $(3,6)$-tight graphs and $(3,3)$-tight graphs that feature in  the rigidity theory of bar-joint frameworks in $\bR^3$.
We assume that $\alpha\geq 3$.

\begin{lemma}\label{l:degreeBoundHighGenus}
Let $G$ be a contraction-minimal embedded graph in $\G_\gamma(\M,\alpha)$. Then there is an independent bound for the degree of a vertex.
\end{lemma}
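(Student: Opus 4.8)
The plan is to follow the proof of Lemma \ref{l:degreeboundGirth} closely, with the boundary cycles of critical superfaces playing the role of critical planar type walks and the higher genus girth inequality of Definition \ref{d:criticalsuperface} replacing the planar girth inequality. Fix a vertex $v$. The incident edges lying on a boundary walk of a nontriangular face are bounded in number by the face walk equation (equivalently Lemma \ref{l:alphacyclebounds}), which bounds both the number and the lengths of those walks. Every remaining edge $e$ at $v$ lies on two triangular faces, and by contraction-minimality $e$ is a critical edge (Definition \ref{d:criticalsuperface}(ii)): it lies on an essential $3$-cycle $c_e$ or on a boundary cycle $c_e$ of a critical superface. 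The critical superface identity of Lemma \ref{l:GIforSymmetric_UKW_CYCLES_B}(iii), together with the independent bound on $\sum_{k\in I(U)}(|c_k|-3)$ from the face walk equation, yields an independent bound $M$ on $|c_e|$; so it suffices to bound, for each fixed $d$ with $3\le d\le M$, the number $t$ of incident edges with $|c_e|=d$. Discarding at most half of these as in Lemma \ref{l:degreeboundGirth}, I may treat each remaining edge $f_j$ as the initial edge of a unique directed closed path $c_{f_j}$ based at $v$, and apply Lemma \ref{l:fiskMohar} with $a=b=v$: if $t\ge r^{d-1}(d-1)!$ then $r$ of these closed paths, say $\pi_1,\dots,\pi_r$, have internally disjoint subwalks $\sigma_1,\dots,\sigma_r$ from $v$ to a common vertex $z$.

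The crux, and the step I expect to be the main obstacle, is an independent bound on the size of each $\M(G)$-homotopy class of the paths $\sigma_i$ relative to $\{v,z\}$. If $\sigma_i$ and $\sigma_j$ are homotopic rel endpoints in $\M(G)$ then, as $\M(G)$ is obtained by deleting the nontriangular faces, the loop $\sigma_i\cdot\sigma_j^{-1}$ bounds an open disc containing only triangular faces of $G$. Substituting $\sigma_j$ for $\sigma_i$ in the boundary cycle of the critical superface carrying $\pi_i$ sweeps that boundary across this triangulated disc, producing a superface that encloses exactly the same nontriangular faces and has the same reduced genus, and hence the same right-hand side in the higher genus girth inequality. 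Since the original superface is critical, this inequality forces $|\sigma_j|\ge|\sigma_i|$, and by symmetry $|\sigma_i|=|\sigma_j|$; the disc is therefore a critical patch in the sense preceding Lemma \ref{l:lensLemma}, and that lemma bounds its size and so the size of the class. The delicate point here is to verify that a boundary substitution across a triangulated disc preserves both the enclosed nontriangular faces and the reduced genus of the superface, so that the modified superface is admissible and its girth inequality may be invoked; once this is established the argument runs as in the planar setting.

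It then remains to bound the number of distinct homotopy classes. By Lemma \ref{l:htpy_map} it is enough to bound the number of distinct $\M$-homotopy classes of the $\sigma_i$ rel $\{v,z\}$. Contracting $z$ to $v$ along a fixed $\sigma_1$ converts pairwise nonhomotopic such paths into pairwise nonhomotopic loops at $v$ that are otherwise disjoint, and Theorem \ref{t:loopsgenusbound} bounds their number by $6g-3$ or $3g$ according as $\M$ is orientable or not. Combining the independent bound on the size of each homotopy class with the independent bound on the number of classes gives an independent bound on $r$, hence on $t$, and summing over the finitely many lengths $3\le d\le M$ and adding the boundary edges yields the required independent bound on the degree of $v$.
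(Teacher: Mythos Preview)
Your proposal is correct and takes essentially the same approach as the paper: the paper's proof consists of a single sentence stating that the proof of Lemma~\ref{l:degreeboundGirth} holds without change once critical walks $c_e$ are understood to be general genus critical walks, and you have carried out exactly this adaptation in detail. The ``delicate point'' you flag about substitution across a triangulated disc preserving the enclosed nontriangular faces and reduced genus is not addressed explicitly in the paper either; it is left implicit in the phrase ``without change'', and your observation that it is the main thing to check is apt but does not represent a divergence in strategy.
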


\begin{proof}
The proof of Lemma \ref{l:degreeboundGirth} holds without change with the understanding that critical walks $c_e$ are general genus critical walks.
\end{proof}

\begin{lemma}\label{l:indptBoundForGenus0}
Let $G$ be a contraction-minimal embedded graph in $\G_\gamma(\M,\alpha)$ and let $U_0$ be a critical superface of genus $0$. Then there is an independent bound for $|V\cap U_0|$.
\end{lemma}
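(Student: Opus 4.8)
The plan is to reduce Lemma \ref{l:indptBoundForGenus0} to the genus-zero machinery already developed in Section \ref{s:contractionminimalandgirth}, in particular to Lemma \ref{l:boundforU_silverBullet}. Since $U_0$ is a critical superface of genus $0$, its closure $U_0^-$ is a sphere-with-holes, and after capping its boundary cycles with discs one obtains a $2$-sphere $\M_{U_0}$. The embedded subgraph $G_{U_0}$, carried into $\M_{U_0}$ by attaching small triangulations to the capping discs, should be viewed as a planar partial triangulation whose nontriangular faces are precisely those nontriangular faces of $G$ lying in $U_0$, together with the newly capped boundary regions. First I would check that, because $U_0$ is critical, $G_{U_0}$ satisfies the \emph{planar} girth inequalities on $\M_{U_0}$ with an appropriate freedom count $\alpha'$: each planar-type closed walk $c$ in $G_{U_0}$ with cellular face $V\subseteq U_0$ is also a critical or non-critical planar-type walk for $G$ in $\M$, enclosing the same nontriangular faces, so the girth inequality for $c$ in $G$ transfers directly to $G_{U_0}$ in $\M_{U_0}$.

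Next I would verify that $G_{U_0}$ is contraction-minimal in the relevant genus-zero family $\G_{\rm pl}(\M_{U_0},\alpha')$. The key point is that any interior edge $e$ of $G_{U_0}$ lying on two facial $3$-cycles is an edge of the contraction-minimal graph $G$, hence lies on an essential $3$-cycle of $G$ or on a critical planar-type walk of $G$. An essential $3$-cycle of $G$ cannot lie strictly inside a genus-zero superface (its curve would bound a disc), so after capping such an edge lies on a critical walk; these critical walks, restricted to $U_0^-$, remain critical for $G_{U_0}$ in $\M_{U_0}$. Therefore contracting such an $e$ would violate the planar girth inequalities in $\M_{U_0}$, establishing contraction-minimality. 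Once this is in place, Lemma \ref{l:boundforU_silverBullet} applied to $G_{U_0}$ in $\M_{U_0}$—a genus-zero surface—yields an independent bound for the number of vertices in the cellular face of a critical planar walk, and since $|V\cap U_0|$ is controlled by the sizes of these patches together with the independently bounded boundary data, the desired bound for $|V\cap U_0|$ follows.

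The main obstacle I expect is bookkeeping the boundary of $U_0$ correctly. A superface $U_0$ need not have a boundary consisting of disjoint simple cycles—its boundary walks may pinch at vertices—so before invoking the genus-zero lemmas I would first perform the vertex-splitting moves described just before Definition \ref{d:criticalsuperface} to pass to a \emph{simple} superface $U\subseteq U_0$ whose boundary is a disjoint union of cycles, noting (as in the proof of Lemma \ref{l:balancedisenough}) that these moves preserve the freedom count and change $|V\cap U_0|$ by an independently bounded amount. A second delicate point is confirming that the capping contributes only a bounded number of faces and that the freedom count $\alpha'=3v'-e'$ of the capped graph is determined by $\alpha$, $s$, and the boundary-cycle lengths $|d_k|$—all of which have independent bounds by the face walk equation and Lemma \ref{l:alphacyclebounds}. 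Granting these reductions, the argument is essentially a transfer of the planar results to the capped sphere, so the substance lies in the topological normalisation of $U_0$ rather than in any new estimate.
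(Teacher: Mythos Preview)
Your approach differs from the paper's. The paper simply asserts that the argument of Lemma~\ref{l:boundforU_silverBullet} applies directly: one works in $\M$ itself, replacing the single boundary walk $c$ of a disc face by the boundary walks $d_1,\dots,d_s$ of $U_0$, replacing ``critical planar walk'' by ``boundary walk of a critical superface,'' and using the degree bound of Lemma~\ref{l:degreeBoundHighGenus} in place of Lemma~\ref{l:degreeboundGirth}. The induction on the number of nontriangular faces contained in $U_0$ and the appeal to Lemma~\ref{l:maxCritsPatch} go through unchanged, because those arguments are local to a triangulated polygonal patch and do not depend on the ambient genus or on $U_0$ being a disc.

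Your cut-and-cap reduction to a sphere $\M_{U_0}$, by contrast, has a genuine gap at the contraction-minimality step. First, the claim that an essential 3-cycle of $G$ cannot lie strictly inside a genus-zero superface is false: a genus-zero superface with $s\geq 2$ boundary components is not a disc, and a 3-cycle in $U_0^-$ separating two boundary components can be essential in $\M$ (take $U_0$ to be an annulus wrapping a handle of a torus). Second, and more seriously, you have not shown that an interior edge $e$ of $G_{U_0}$ lies on a critical \emph{planar} walk of $G_{U_0}$ in the sphere. In $\M$ the edge $e$ is critical because it lies on an essential 3-cycle or on a boundary walk of a critical superface $U'$ of arbitrary genus; that walk may leave $U_0^-$, in which case after cutting it is only a path, or $U'$ may extend outside $U_0$ even when its boundary walk stays in $U_0^-$. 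In neither case is a critical planar walk through $e$ in $\M_{U_0}$ produced automatically, so $G_{U_0}$ need not be contraction-minimal in $\G_{\rm pl}(\M_{U_0},\alpha')$, and Lemma~\ref{l:boundforU_silverBullet} cannot be invoked as a black box. The fix is to abandon the capping and instead rerun the in-situ argument as the paper does.
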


\begin{proof}
The argument in the proof of Lemma \ref{l:boundforU_silverBullet} applies directly.
\end{proof}

\begin{lemma}\label{l:indptBoundForGenusGeneral}
Let $G$ be a contraction-minimal embedded graph in $\G_\gamma(\M,\alpha)$ and let $U$ be a critical superface of general genus. Then there is an independent bound for $|V\cap U|$.
\end{lemma}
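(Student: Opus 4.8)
\textbf{Proof plan for Lemma \ref{l:indptBoundForGenusGeneral}.}

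The plan is to reduce the general genus case to the genus zero case already handled in Lemma \ref{l:indptBoundForGenus0}, using the same kind of cutting-and-capping surgery that drove the proof of Theorem \ref{t:main} and Proposition \ref{p:bound_on3cyclesG_pl}. The key observation is that a critical superface $U$ of positive genus $g(U)$ can be simplified: by Theorem \ref{t:extensionlemma} (applied to the open surface $U$), or more directly by the addition formula for reduced genus, one can cut $U$ along an essential simple cycle of $G$ lying inside $U$ so as to reduce its genus. First I would establish that, because $U$ is critical and $G$ is contraction-minimal, the edges interior to $U$ that do not lie on a boundary walk of $U$ must each lie on an essential 3-cycle or on an equivalent critical walk/superface (exactly as in the opening discussion before Lemma \ref{l:lensLemma}); this is what prevents a reduction of edge count by contraction.

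The main step is an induction on $g(U)$. For the base case $g(U)=0$ the result is Lemma \ref{l:indptBoundForGenus0}. For the inductive step, suppose $g(U)\geq 1$. I would select an essential simple cycle $c$ in $G_U$ that does not disconnect $U$ (its existence follows from $g(U)\geq 1$, viewing $U^-$ as a surface with boundary). Cutting $\M$ along $c$ and capping with discs, as in the proof scheme of Section \ref{s:theBEthm}, yields a manifold $\M_1$ (or a pair $\M_1,\M_2$) of strictly smaller genus carrying an embedded graph $G_1$ that inherits the sparsity count and the girth inequalities, so that $G_1$ lies in $\G_{\gamma_1}(\M_1,\alpha')$ for the appropriate $\alpha'$ dictated by Lemma \ref{l:alphacyclebounds}. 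The superface $U$ descends to a critical superface $U_1$ of $G_1$ of strictly smaller genus, to which the induction hypothesis applies, giving an independent bound for $|V\cap U_1|$. The contraction-minimality is transported across the cut using Lemma \ref{l:degreeBoundHighGenus}: any interior edge of $G_1$ that is contractible with the contraction staying in the family must, as an edge of the original contraction-minimal $G$, lie on an essential 3-cycle or critical walk through a vertex of the capping cycle $c$, and there are only independently-boundedly many such edges because the degrees of the three (or six) capping vertices are independently bounded.

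Assembling these pieces, $|V\cap U|$ differs from $|V\cap U_1|$ by at most the number of vertices introduced by capping together with the independently-bounded number of edges created near the cut, so an independent bound for the former follows from the bound for the latter. The hard part, I expect, will be verifying that cutting along $c$ genuinely produces a \emph{critical} superface of lower genus in the cut manifold, with the reduced-genus bookkeeping matching up; this requires careful use of the addition formula $g_r(\M)=g_r(\M_U)+g_r(\M_W)+(s-1)$ and the face walk equation to confirm that the girth inequality for $U$ transfers to equality for $U_1$ and that the complementary count $f(G_{W})$ is preserved under the surgery. A secondary subtlety is handling boundary walks of $U$ that are not disjoint cycles; here I would first apply the vertex-splitting normalisation described before Definition \ref{d:criticalsuperface} to replace $U$ by a simple superface with the same vertex count and the same criticality, reducing to the case of disjoint boundary cycles before performing the cut.
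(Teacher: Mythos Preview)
There is a genuine gap at precisely the step you flag as the hard part: after your surgery the graph $G_1$ is not in the family $\G_{\gamma_1}(\M_1,\alpha')$ for any choice of $\alpha'$, so the induction cannot be invoked. Concretely, cut along a cycle $c\subset U$ of length $\ell$, nonseparating in $U$, and cap with two triangulated cones. The complement $W$ of $U^-$ and the subgraph $G_W$ are untouched by this surgery, so $f((G_1)_W)=f(G_W)=\alpha$, the last equality being exactly the criticality of $U$ via Lemma~\ref{l:GIforSymmetric_UKW_CYCLES_B}. Meanwhile duplicating $c$ adds $3\ell-\ell=2\ell$ to $3v-e$ and the two cone caps add $2(3-\ell)=6-2\ell$, so $f(G_1)=\alpha+6$. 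The higher genus girth inequality for the descended superface $U_1$ in $G_1$ reads $f((G_1)_W)\geq f(G_1)$, that is $\alpha\geq\alpha+6$, which is false. Thus $G_1$ violates the girth inequalities outright, $U_1$ is not merely non-critical but forbidden, and there is nothing in the target family to which the inductive hypothesis applies. Capping with bare discs instead of cones does not help: $f(G_1)$ still strictly exceeds $f((G_1)_W)$.

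The paper avoids this obstruction by never leaving $G\subset\M$. After a short case analysis it locates an essential walk $c_{\rm ess}$ already present in $G$ (either an essential $3$-cycle or an essential boundary walk of some critical superface), and for each edge $e$ in $U$ it examines the critical walk $c_e$ that contraction-minimality of $G$ supplies. If $c_e$ lies in one component of $\M\setminus c_{\rm ess}$ then the associated critical superface $U_{c_e}$ has genus below $g$ and is bounded by the inductive hypothesis; otherwise $c_e$ passes through a vertex of $c_{\rm ess}$, and the degree bound of Lemma~\ref{l:degreeBoundHighGenus} together with the bounded length of critical walks limits how many such edges there can be. Because everything happens in the original $G$, no family membership, criticality, or contraction-minimality needs to be re-established on a modified graph.
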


\begin{proof} The proof is by induction on the genus. Noting the previous lemma for genus $0$, let $G$ be contraction-minimal and assume that an independent bound exists for the number of edges in the closure of a superface with genus less than $g$.  We show that there is a such a bound for a superface of genus $g$. We may assume that one of the following is true.
\medskip

(i) There is a boundary walk of a critical superface that is essential, that is, that is not null-homotopic in $\M$.

(ii) There is an essential 3-cycle. 

\medskip
To see this suppose that (ii) does not hold.  Then, by contraction-minimality, every nonboundary edge of $G$ is on a boundary walk for a critical superface. If every such boundary walk is null-homotopic then a general genus critical superface has all its boundary walks  null-homotopic. Thus the components of the complement of $U^-$ are  genus 0 superfaces, and there is an independent bound for these, by Lemma \ref{l:indptBoundForGenus0}. Thus there is an independent bound for the number of critical edges in this case. So we can assume that (i) or (ii) holds and we let $c_{\rm ess}$ be the essential walk or 3-cycle.

We now consider the division of $\M$ by $c_{\rm ess}$ that is analogous to the division in the induction scheme in Section \ref{s:theBEthm}.

Consider first the case that $\M$ is orientable. Assume moreover that the complement $\M\backslash c_{\rm ess}$ is not connected and so has 2 components, namely $U_0$, the superface for $c_{\rm ess}$, and $W$ say. Since $c_{\rm ess}$ is essential we have $g(U_0)< g$ and $g(W)<g$. 
Let $e$ be an edge in a critical superface $U$ of genus $g$. Then $e$ lies on a closed walk $c$ that is either an essential 3-cycle or a boundary walk of a critical superface $U_c$.
If $c$ happens to lie in $G_U$ or $G_W$ then, by the induction hypothesis, $U_c$ has an independent bound for the edges in its closure. The only other possibility requires that $c$ contains a vertex of $c_{\rm ess}$. By Lemma \ref{l:degreeBoundHighGenus}, and the bounded lengths of critical walks, it follows that there is an independent bound for the number of such critical walks. Combining these facts it follows that there is an independent bound for the number of edges $e$. This completes the induction step and so the proof is complete in this case. The case that $\M\backslash c_{\rm ess}$ is connected is similar, as are the nonorientable cases.
\end{proof}

\begin{lemma}\label{l:indptBoundForCriticalWalks}
Let $G$ be a contraction-minimal embedded graph in $\G_\gamma(\M,\alpha)$. Then there is an independent bound for the set of edges that lie on the boundary of a critical superface.
\end{lemma}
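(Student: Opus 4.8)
The plan is to mimic the exhaustive colouring argument of Lemma~\ref{l:indptbound_critwalks}, now using the general-genus estimate of Lemma~\ref{l:indptBoundForGenusGeneral} in the role played there by Lemma~\ref{l:boundforU_silverBullet}. I would begin by recording that every invariant of a single critical superface is independently bounded. By the face walk equation the number of nontriangular faces of $G$, and hence the number of possible index sets $I(U)$, is bounded in terms of $g(\M)$ and $\alpha$; and the critical equality of Definition~\ref{d:criticalsuperface}(i), combined with the reduced-genus addition formula, forces $g_r(U)$, the number $s$ of boundary cycles, and each boundary length $|d_k|$ (noting $|d_k|\ge 3$) to be independently bounded. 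With the degree bound of Lemma~\ref{l:degreeBoundHighGenus}, Lemma~\ref{l:indptBoundForGenusGeneral} then bounds the number of edges in the closure of any one critical superface, so the entire task reduces to aggregating over the, a priori unboundedly many, distinct critical superfaces.

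Fix a nonempty subset $S$ of the nontriangular faces; as there are finitely many such $S$, it suffices to bound the edges lying on a boundary walk of some critical superface $U$ with $I(U)=S$. Let $\U$ be the family of these superfaces. Since every member of $\U$ contains each nontriangular face in $S$, any two members overlap; and since they contain exactly the same nontriangular faces, the difference of any two is covered by triangular faces alone. These two properties reinstate the dichotomy of Lemma~\ref{l:indptbound_critwalks}: after choosing $U_0\in\U$ and colouring its boundedly many triangular faces, every $U\in\U$ is either contained in the coloured region or, being connected and meeting it, has an uncoloured triangular face on the frontier. I would then grow the coloured region $K$ one round at a time, attaching at each stage a superface through each uncoloured frontier-adjacent triangular face; there are boundedly many such faces because the frontier has bounded length, and each attached superface is bounded by Lemma~\ref{l:indptBoundForGenusGeneral}. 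Each round strictly decreases the number of uncoloured triangular faces of every member of $\U$ that is not yet contained in $K$, and since each member has a bounded number of triangular faces, the process halts after an independently bounded number of rounds. At termination every $U\in\U$ lies in $K$, and $K$, being a union of boundedly many bounded pieces, has boundedly many edges; the boundary edges in question are among them, and summing over the finitely many sets $S$ gives the bound.

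The superfaces with $I(U)=\emptyset$, and more generally any critical superface one of whose boundary walks is essential, lie outside this overlap scheme and are treated separately. When a boundary walk is essential I would run the cut-and-cap genus induction already used in Lemma~\ref{l:indptBoundForGenusGeneral} and Proposition~\ref{p:bound_on3cyclesG_pl}: cutting $\M$ along such a walk strictly lowers the genus, and the degree bound of Lemma~\ref{l:degreeBoundHighGenus} limits the number of critical walks meeting the cut, so induction on genus disposes of these edges. The remaining null-homotopic superfaces with $I(U)=\emptyset$ are triangulated discs of bounded boundary length, for which the critical-patch estimates of Lemmas~\ref{l:lensLemma} and~\ref{l:maxCritsPatch} together with the same colouring bound the associated edges.

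I expect the main obstacle to be the topological bookkeeping once the superfaces carry handles. Unlike in the disc setting of Lemma~\ref{l:indptbound_critwalks}, the growing region $K$ is not itself a superface and may acquire positive genus, so the single-superface bound cannot be reapplied to $K$ directly; one must instead argue that $K$ remains bounded purely because it is assembled from a bounded number of bounded pieces, and verify that the frontier dichotomy survives in the presence of several boundary cycles and handles. It is equally delicate to confirm that membership of $\U$ genuinely forces the pairwise overlap and the triangulated difference on which the strict monotonicity of the colouring depends, so that the number of rounds is controlled by the independent per-superface face count rather than by the size of $G$.
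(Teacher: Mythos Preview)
Your proposal is correct and takes essentially the same approach as the paper: the paper's entire proof is the single sentence ``This follows from Lemma~\ref{l:indptBoundForGenusGeneral} and the exhaustive colouring argument in the proof of Lemma~\ref{l:indptbound_critwalks},'' which is precisely the strategy you outline in your first two paragraphs. Your third and fourth paragraphs add caution about the cases $I(U)=\emptyset$ and essential boundary walks, and about the topology of the growing region $K$; the paper does not separate these out, since the overlap-and-colouring scheme already applies once $S$ is nonempty (members of $\U$ share the faces in $S$ regardless of genus or essentiality of their boundaries), and for $\alpha\ge 3$ the critical equality forces any superface with $I(U)=\emptyset$ to be a triangulated disc bounded by a $3$-cycle, hence a facial $3$-cycle in a contraction-minimal graph.
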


\begin{proof} 
This follows from Lemma \ref{l:indptBoundForGenusGeneral} and the exhaustive colouring argument in the proof of Lemma \ref{l:indptbound_critwalks}. 
\end{proof}

\begin{thm}\label{t:mainGgamma}
Let $\M$ be a compact connected 2-manifold. Then there are  finitely many contraction-minimal graphs in $\G_\gamma(\M,\alpha)$.  
\end{thm}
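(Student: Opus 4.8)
The plan is to reduce the statement to an independent bound on the number of edges of a contraction-minimal $G\in\G_\gamma(\M,\alpha)$; once such a bound is in hand, finiteness follows because a fixed compact surface carries only finitely many cellular embeddings of graphs with a bounded number of edges, up to homeomorphism. To bound $|E(G)|$ I would first dispose of the boundary edges of the nontriangular faces: by the face walk equation (a consequence of Lemma \ref{l:alphacyclebounds}) both the number of nontriangular faces and the lengths $|c_k|$ of their boundary walks are independently bounded, so these edges contribute an independently bounded amount. Every remaining edge $e$ is incident to two triangular faces, and I claim each such $e$ is a \emph{critical edge} in the sense of Definition \ref{d:criticalsuperface}(ii). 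As in the introduction, a contraction-minimal member of the family has no nonfacial null-homotopic $3$-cycle, since the interior of such a cycle would contain a contractible edge; hence if $e$ lay on no essential $3$-cycle it would lie on no nonfacial $3$-cycle at all, so $G/e$ would be simple. If in addition $e$ lay on no boundary walk of a critical superface, then $G/e$ would remain cellular, retain $f=\alpha$, and still satisfy every superface sparsity inequality, so by Theorems \ref{t:generalgirthequals} and \ref{t:Gr_equals_F} it would lie back in $\G_\gamma(\M,\alpha)$, contradicting contraction-minimality. Thus every edge of $G$ lies on a nontriangular boundary walk, on a boundary walk of a critical superface, or on an essential $3$-cycle.

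The edges on boundary walks of critical superfaces are controlled directly by Lemma \ref{l:indptBoundForCriticalWalks}, which gives an independent bound for their number, so it remains to bound the number of essential $3$-cycles of $G$; this is where the real work lies. I would establish it by the cut-and-cap induction on genus used in Section \ref{s:theBEthm} and in Proposition \ref{p:bound_on3cyclesG_pl}, phrased so that the inductive hypothesis asserts, for every surface of strictly smaller genus \emph{and every} value of the sparsity parameter, an independent bound on the vertex count of a contraction-minimal member of the corresponding family, with the sphere and projective plane as base cases. If $G$ has neither an essential $3$-cycle nor an essential critical-superface boundary walk, then, exactly as in the proof of Lemma \ref{l:indptBoundForGenusGeneral}, every critical superface has null-homotopic boundary and hence genus $0$, so Lemmas \ref{l:indptBoundForGenus0} and \ref{l:indptBoundForCriticalWalks} already bound everything.

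Otherwise I would choose an essential $3$-cycle or essential critical walk $c_{\mathrm{ess}}$ and cut and cap $\M$ along it, exactly as in the Barnette--Edelson scheme, obtaining one or two surfaces $\M_1$ (or $\M_1,\M_2$) of smaller genus carrying induced cellular graphs $G_1$ (or $G_1,G_2$). A critical superface of $G_1$ pulls back to a critical superface of $G$ enclosing the same nontriangular faces, so $G_1$ again satisfies the higher genus girth inequalities and lies in a family $\G_\gamma(\M_1,\alpha')$ (with $\alpha'=\alpha+6$ in the connected orientable case, and the evident adjustments otherwise) that is covered by the inductive hypothesis.

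The main obstacle is the familiar one: the derived graph $G_1$ need not be contraction-minimal. To handle it I would argue that any edge of $G_1$ whose contraction keeps it in the family corresponds to an edge of $G$ whose criticality was destroyed by the cut, and that every such edge is incident to one of the at most three (or six, in the M\"obius case) vertices lying on $c_{\mathrm{ess}}$. By the degree bound of Lemma \ref{l:degreeBoundHighGenus} there are only an independently bounded number of these, so $G_1$ is obtained from a contraction-minimal member of $\G_\gamma(\M_1,\alpha')$ by a bounded number of vertex-splitting moves, in the manner of Lemma \ref{l:simpleLemma}. The inductive hypothesis then bounds $|V(G_1)|$, hence $|V(G)|$, and in particular the number of essential $3$-cycles of $G$; the two-component and nonorientable cases go through identically, completing the bound on $|E(G)|$ and with it the proof. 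The two points that require genuine care are that the girth inequalities transfer to the capped graphs and that the set of newly contractible edges is confined to the bounded-degree cut vertices; everything else is bookkeeping resting on Lemmas \ref{l:degreeBoundHighGenus}--\ref{l:indptBoundForCriticalWalks}.
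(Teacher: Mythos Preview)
Your overall strategy matches the paper's: partition the edges of a contraction-minimal $G\in\G_\gamma(\M,\alpha)$ into boundary edges of nontriangular faces, edges on critical superface boundaries, and edges on essential $3$-cycles, and bound each class. The paper's own proof is two lines, citing Proposition~\ref{p:bound_on3cyclesG_pl} and Lemma~\ref{l:indptBoundForCriticalWalks}. Note that Proposition~\ref{p:bound_on3cyclesG_pl} is stated for $\G_{\rm pl}(\M,\alpha)$, and a contraction-minimal graph in $\G_\gamma$ need not be contraction-minimal in $\G_{\rm pl}$; the paper is tacitly using that the \emph{proof} of Proposition~\ref{p:bound_on3cyclesG_pl} goes through verbatim for $\G_\gamma$ once Lemmas~\ref{l:degreeboundGirth}, \ref{l:boundforU_silverBullet}, \ref{l:indptbound_critwalks} are replaced by their Section~\ref{s:36minimalsThm} analogues, Lemmas~\ref{l:degreeBoundHighGenus}--\ref{l:indptBoundForCriticalWalks}. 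You have simply written this adaptation out, which is fine and arguably clearer.

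There is one genuine slip in your induction step. You assert that a newly contractible edge $e$ of $G_1$ ``is incident to one of the at most three (or six) vertices lying on $c_{\mathrm{ess}}$''. That is false in general: $e$ can be far from $c_{\mathrm{ess}}$. What is true, and what the proof of Proposition~\ref{p:bound_on3cyclesG_pl} actually uses, is that the essential $3$-cycle or critical walk $c_e$ of $G$ witnessing the noncontractibility of $e$ must pass through a vertex of $c_{\mathrm{ess}}$ (otherwise $c_e$ survives intact in $G_1$ and still obstructs contraction there). The degree bound of Lemma~\ref{l:degreeBoundHighGenus}, together with the independent bound on the length of any critical walk, then bounds the number of such walks $c_e$, and hence the total number of edges lying on any of them. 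With this correction your argument goes through.
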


\begin{proof}
This follows from Proposition \ref{p:bound_on3cyclesG_pl} and Lemma \ref{l:indptBoundForCriticalWalks} since every edge of a contraction minimal graph in $\G_\gamma(\M,\alpha)$ is either a critical edge or the edge of the boundary of a nontriangular face.
\end{proof}

\subsection{Contraction minimal graphs and 3-rigidity} 

The following theorem is an immediate corollary of Theorems \ref{t:mainGgamma}, \ref{t:Gr_equals_F}. 

\begin{thm}\label{t:finiteFor36_33}
Let $\M$ be a compact 2-manifold. Then there are finitely many contraction-minimal graphs in $\F(\M,6)$ and $\F(\M,3)$.
\end{thm}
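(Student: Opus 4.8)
The plan is to recognise this as a formal corollary obtained by combining a set-theoretic identity of families with an already-established finiteness result. First I would invoke Theorem \ref{t:Gr_equals_F}, which asserts the equalities $\G_\gamma(\M,6)=\F(\M,6)$ and $\G_\gamma(\M,3)=\F(\M,3)$, where $\gamma$ is the genus of $\M$. The content to extract here is that these are genuine equalities of collections of $\M$-embedded graphs: a cellular embedded graph lies in $\F(\M,6)$ exactly when it lies in $\G_\gamma(\M,6)$, and likewise for $\alpha=3$. So the two descriptions — via $(3,\alpha)$-tightness on the one hand, and via satisfaction of all higher genus girth inequalities together with the Maxwell count on the other — pick out precisely the same embedded graphs.

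The key observation, which I would make explicit, is that contraction-minimality is intrinsic to the family: by the definition preceding Theorem \ref{t:partialtriangulations}, a graph $G$ in a family $\F(\M)$ is contraction-minimal if for every edge $e$ on two facial $3$-cycles and no other $3$-cycle the contraction $G/e$ fails to lie in $\F(\M)$. This condition refers only to membership in the family and to the topological contraction operation $G/e$, and not to the particular combinatorial criterion used to describe the family. Consequently, since $\F(\M,6)$ and $\G_\gamma(\M,6)$ coincide as sets of embedded graphs, an element is contraction-minimal in $\F(\M,6)$ if and only if it is contraction-minimal in $\G_\gamma(\M,6)$; the identical statement holds for $\alpha=3$. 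In particular the two notions of contraction-minimal member are the same.

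Finally I would apply Theorem \ref{t:mainGgamma} twice, with $\alpha=6$ and with $\alpha=3$, to conclude that there are finitely many contraction-minimal graphs in $\G_\gamma(\M,6)$ and in $\G_\gamma(\M,3)$ respectively. Transporting these finite sets across the family equalities of Theorem \ref{t:Gr_equals_F}, and using the coincidence of the contraction-minimality notions established above, yields finitely many contraction-minimal graphs in $\F(\M,6)$ and in $\F(\M,3)$, as required.

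There is essentially no substantive obstacle in this argument, since the deep work has already been carried out in Theorems \ref{t:mainGgamma} and \ref{t:Gr_equals_F}. The only point requiring care — and the one I would state precisely rather than leave implicit — is the transfer of contraction-minimality across the equality of families: one must note that the defining clause of contraction-minimality is insensitive to which equivalent characterisation of the family is used, so that the finite list produced for $\G_\gamma(\M,\alpha)$ is exactly the finite list sought for $\F(\M,\alpha)$.
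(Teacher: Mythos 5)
Your proposal is correct and is essentially identical to the paper's own proof, which derives the theorem as an immediate corollary of Theorems \ref{t:mainGgamma} and \ref{t:Gr_equals_F}. Your explicit remark that contraction-minimality depends only on membership in the family (and hence transfers across the equality $\G_\gamma(\M,\alpha)=\F(\M,\alpha)$) is precisely the point the paper leaves implicit by calling the corollary ``immediate.''
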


Recall that a \emph{bar-joint framework} in $\bR^3$ is a pair $(G,p)$ where $G=(V,E)$ is a simple graph and $p:V\to \bR^3$ is a spatial realisation of its vertices, giving a spatial realisation of $G$. See \cite{gra-ser-ser} for  example. Also, $(G,p)$ is a \emph{generic framework} if the set of coordinates of $p(v)$, for $v$ in $V$, is an algebraically independent set, and $G$ is a \emph{3-rigid} graph if its generic framework admits no nontrivial  motions preserving the lengths,  $\|p(v)-p(w)\|_2$, of the spatial edges (framework bars) for all edges $vw$ in $E$. Furthermore a graph is \emph{minimally 3-rigid} if any proper spanning subgraph of $G$ fails to be 3-rigid. A necessary condition for minimal 3-rigidity is that $G$ is $(3,6)$-tight. It remains a significant open problem to determine necessary and sufficient combinatorial conditions for minimal 3-rigidity.

An important fact in framework rigidity theory, mentioned in the introduction, is that vertex-splitting preserves minimal 3-rigidity. Accordingly, if it is known that a particular family of graphs $G$ derives from a contraction-minimal graph $H$ by repeated vertex splitting then it remains only to confirm the minimal 3-rigidity of $H$ to be assured of the minimal 3-rigidity of the graphs $G$.

\begin{example} {\bf Projective plane graphs.} For the projective plane it was shown by direct methods, in Kastis and Power \cite{kas-pow}, that there are 8  contraction-minimal (3,6)-tight cellular graphs and these are shown below, where diametrically opposite vertices and edges are identified. These base graphs are 3-rigid and so all $(3,6)$-tight graphs that admit a cellular embedding in the projective plane are minimally 3-rigid.

\begin{center}
\begin{figure}[ht]
\centering
\includegraphics[width=2cm]{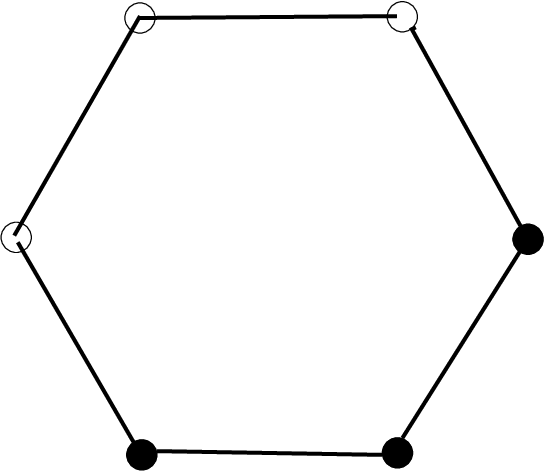}\quad 
\includegraphics[width=2cm]{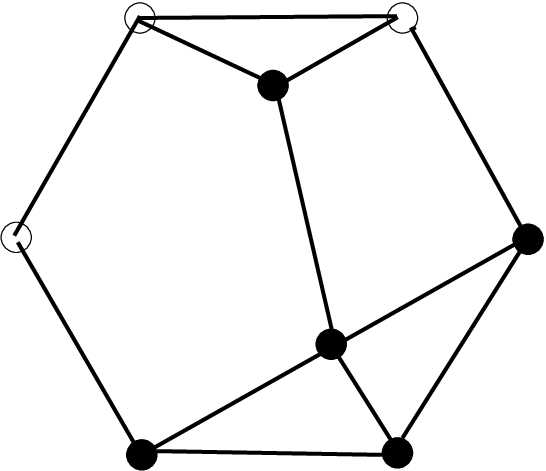}\quad 
\includegraphics[width=2cm]{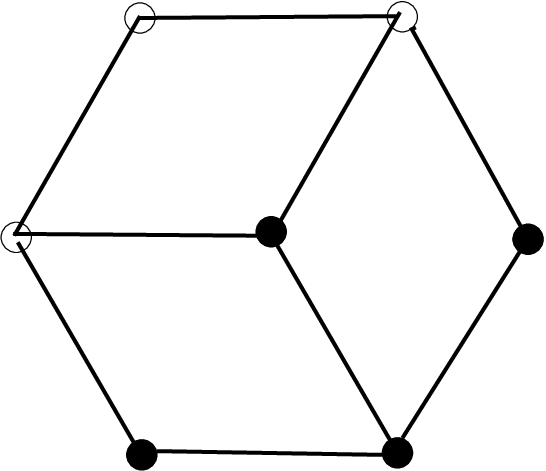}\quad 
\includegraphics[width=2cm]{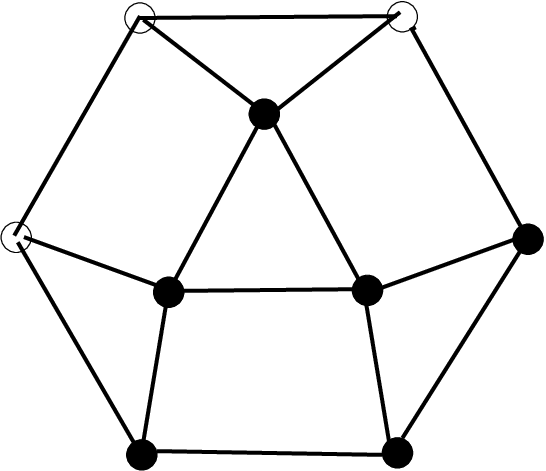}\quad
\end{figure}
\end{center}
\begin{center}
\begin{figure}[ht]
\centering

\includegraphics[width=2cm]{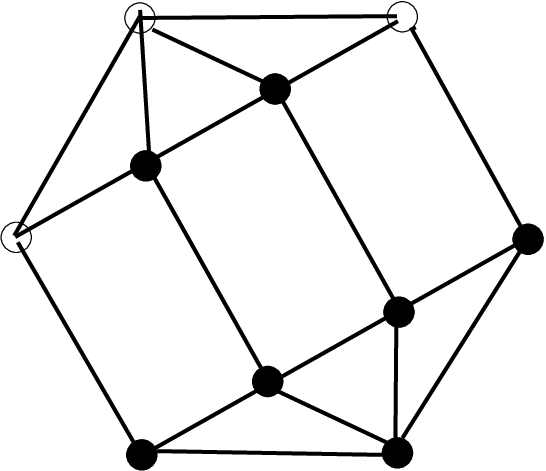}\quad
\includegraphics[width=2cm]{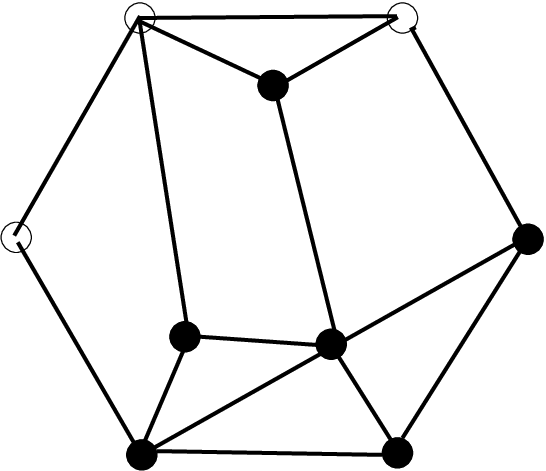}\quad
\includegraphics[width=2cm]{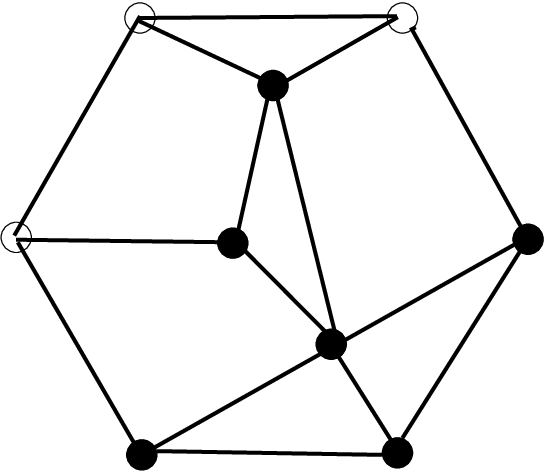}\quad 
\includegraphics[width=2cm]{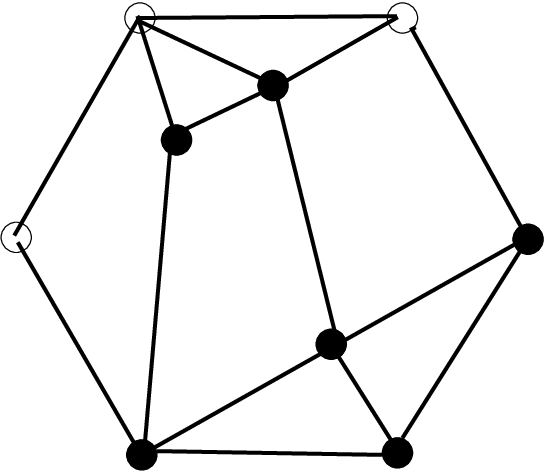}\quad 
\caption{The 8 contraction-minimal cellular (3,6)-tight graphs in the projective plane.} 
\label{f:essentialIrreducibles}
\end{figure}
\end{center}

\end{example}

\begin{example} {\bf Torus graphs.}
For $(3,6)$-tight torus graphs it follows from Lemma \ref{l:alphacyclebounds} that there are 11 possibilities for the set of ordered lists of nontriangular face sizes, namely $\{4,4,4,4,4,4\}$, $\{5,4,4,4,4\}$, $\{5,5,4,4\}$, $\{5,5,5\}$, $\{6,4,4,4\}$, $\{6,5,4\}$, $\{6,6\}$, $\{7,4,4\}$, $\{7,5\}$,
$\{8,4\}$ and $\{9\}$. Also there are $(3,6)$-tight torus graphs with two or more nontriangular faces that are not 3-rigid. On the other hand, in Cruickshank, Kitson and Power \cite{cru-kit-pow-2} it was shown by direct methods that there are 2 contraction-minimal $(3,6)$-tight cellularly embedded graphs for the torus that have a single nontriangular face. These are shown in Figure \ref{f:singleholepair} and are minimally 3-rigid. Accordingly,
all $(3,6)$-tight graphs that admit a cellular embedding in the torus with a single nontriangular face are minimally 3-rigid. 

 \begin{center}
\begin{figure}[ht]
\centering
\includegraphics[width=2cm]{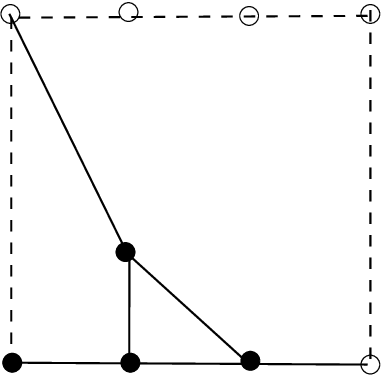}\quad  \quad \quad 
\includegraphics[width=2cm]{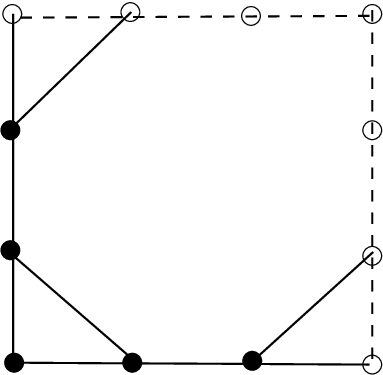}
\caption{The minimal (3,6)-tight cellular graphs for the torus with a single nontriangular face.}
\label{f:singleholepair}
\end{figure}
\end{center}

 Simple exploration shows that there are more than 35 minimals with only 4-sided nontriangular faces, and this suggests that there are more than 100 contraction-minimal graphs. It would be of interest to determine these and their 3-rigidity.
Figure \ref{f:contrastingMinimals} shows 4 contrasting examples with 6 quadrilateral faces.

 \begin{center}
\begin{figure}[ht]
\centering
\includegraphics[width=1.6cm]{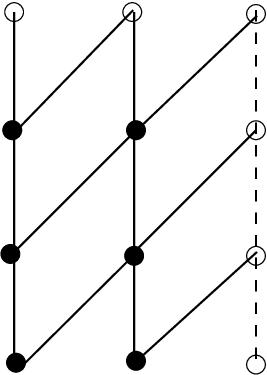}\quad \quad 
\includegraphics[width=1.75cm]{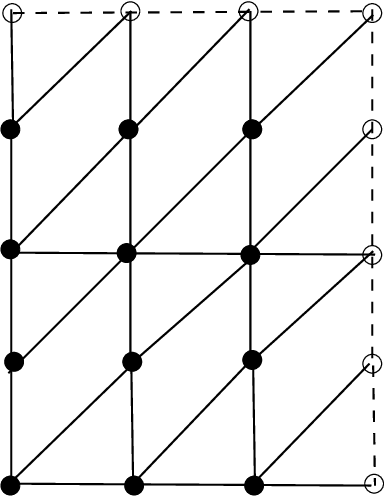} \quad \quad
\includegraphics[width=2.2cm]{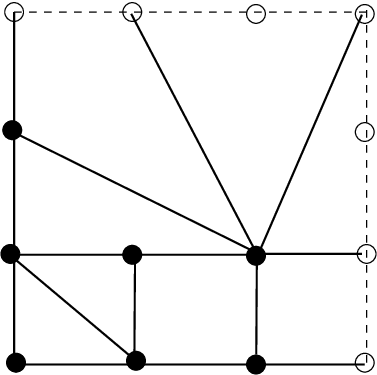}\quad \quad
\includegraphics[width=3cm]{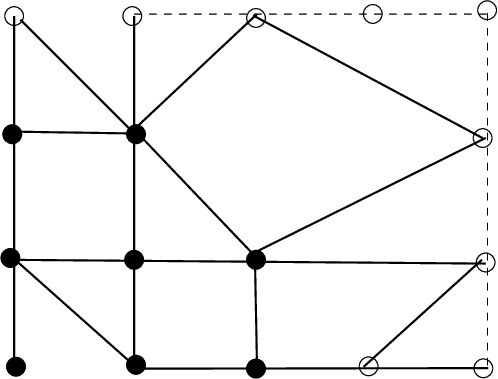}
\caption{Some minimal graphs for the torus with 6 four-sided faces.}
\label{f:contrastingMinimals}
\end{figure}
\end{center}

\end{example}

\subsection{NonEuclidean minimal rigidity.}
One can also consider the minimal 3-rigidity of bar-joint frameworks in $\bR^3$ with respect to non-Euclidean norms, such as the norms $\|\cdot \|_q$, for $1<q\neq 2<\infty$. See Kitson and Power \cite{kit-pow} and Dewar, Kitson and Nixon  \cite{dew-kit-nix} for example. The necessary condition for such minimal $3$-rigidity of a framework $(G,p)$ is that the simple graph $G$ is $(3,3)$-tight. It remains an open problem from \cite{kit-pow} whether this condition is also sufficient in general.

For the sphere, in view of Lemma \ref{l:alphacyclebounds} there are no cellularly embedded $(3,3)$-tight graphs. For the projective plane they are precisely the graphs of triangulations, and it is shown in \cite{dew-kit-nix} that these are the minimally 3-rigid projective plane graphs for $\|\cdot\|_q$. For the genus 1 torus $\M$, to show the sufficiency of $(3,3)$-tightness for minimal 3-rigidity for $\|\cdot\|_q$ it would be sufficient, by Theorem \ref{t:finiteFor36_33}, to identify the contraction-minimal graphs in $\G(\M,3)$ and to determine their $3$-rigidity.

Figures \ref{f:33tightMinimalsA} and \ref{f:33tightMinimalsB} show 5 examples of contraction minimal (3,3)-tight cellular torus graphs with 7 vertices and a single nontriangular face, denoted
$7v1,7v2, \dots ,7v5$.
 \begin{center}
\begin{figure}[ht]
\centering
\includegraphics[width=2.2cm]{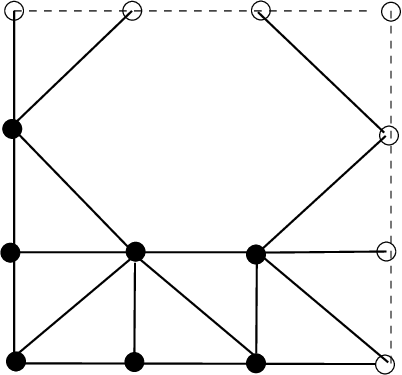}\quad \quad 
\includegraphics[width=2.2cm]{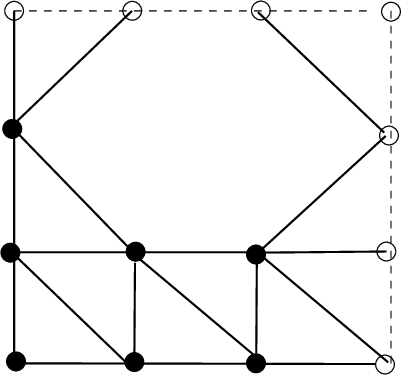}\quad \quad 
\includegraphics[width=2.2cm]{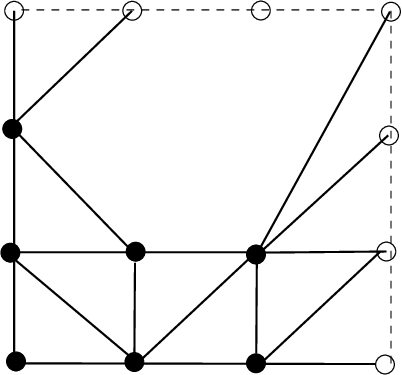}\quad \quad 
\includegraphics[width=2.2cm]{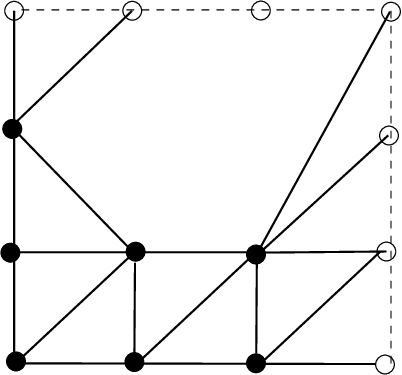}\quad \quad 
\caption{$7v1, 7v2, 7v3, 7v4$, contraction-minimals.}
\label{f:33tightMinimalsA}
\end{figure}
\end{center}

 \begin{center}
\begin{figure}[ht]
\centering
\includegraphics[width=2.2cm]{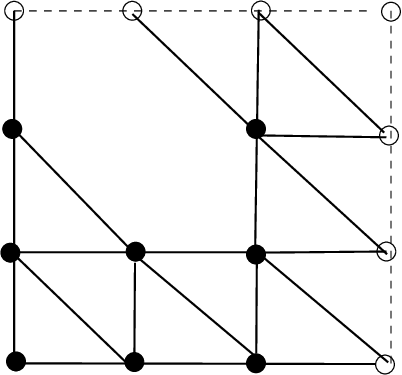}\quad \quad 
\includegraphics[width=2.2cm]{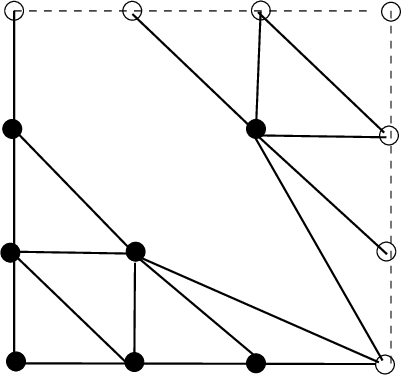}\quad \quad 
\caption{(i) $8v1$ and (ii) its reduction $7v5$, a contraction-minimal.}
\label{f:33tightMinimalsB}
\end{figure}
\end{center}

Consider the \emph{vertex degree signature} of a graph to be the symbol $d_1^{r_1}\dots d_k^{r_k}$ where there are $r_j$ vertices of degree $d_j$. For $7v1, \dots ,7v5$ these symbols  are
\[
6^35^24^2, \quad 65^6,\quad 6^354^3,\quad  65^54,\quad  65^6.
\]
Also, as embedded graphs, $7v2$ and $7v5$ differ in that the pair of vertices that are not boundary vertices are adjacent vertices for $7v2$ and are not adjacent for $7v5$. It follows that these 5 embedded graphs are not homeomorphic. It would be of interest to determine a complete list of contraction-minimal $(3,3)$-tight torus graphs, including those  with a single nontriangular face, and to confirm their minimal 3-rigidity for the nonEuclidean norms $\|\cdot\|_q$.




\bibliographystyle{abbrv}
\def\lfhook#1{\setbox0=\hbox{#1}{\ooalign{\hidewidth
  \lower1.5ex\hbox{'}\hidewidth\crcr\unhbox0}}}

\end{document}